\def\ccite#1{~\cite{#1}}
\def\inte#1{
\displaystyle\mathop{#1\kern0pt}^\circ }
\let\pa=\partial
\let\al=\alpha
\let\d=\delta
\let\e=\varepsilon
\let\r=\rho
\let\f=\frac
\let\vf=\varphi
\let\p=\psi
\let\D=\Delta
\def\cD{{\mathcal D}}
\def\cF{{\mathcal F}}
\def\pa{\partial}
\def\dB{\dot{B}}
\def\dD{\dot{\Delta}}
\def\virgp{\raise 2pt\hbox{,}}
\def\cdotpv{\raise 2pt\hbox{;}}
\def\eqdefa{\buildrel\hbox{\footnotesize def}\over =}
\def\Re{\mathop{\rm Re}\nolimits}
\def\im {\mathop{\rm Im}\nolimits}
\def\C{\mathop{\mathbb C\kern 0pt}\nolimits}
\def\DD{\mathop{\mathbb D\kern 0pt}\nolimits}
\def\EE{\mathop{{\mathbb E \kern 0pt}}\nolimits}
\def\K{\mathop{\mathbb K\kern 0pt}\nolimits}
\def\N{\mathop{\mathbb N\kern 0pt}\nolimits}
\def\Q{\mathop{\mathbb Q\kern 0pt}\nolimits}
\def\R{\mathop{\mathbb R\kern 0pt}\nolimits}
\def\SS{\mathop{\mathbb S\kern 0pt}\nolimits}
\def\ZZ{\mathop{\mathbb Z\kern 0pt}\nolimits}
\def\TT{\mathop{\mathbb T\kern 0pt}\nolimits}
\def\P{\mathop{\mathbb P\kern 0pt}\nolimits}
\newcommand{\Z}{{\ZZ}}
\def\curl{\mathop{\rm curl}\nolimits}
\def\no{\noindent}
\def\na{\nabla}
\def\p{\partial}
\def\th{\theta}
\newcommand{\w}[1]{\langle {#1} \rangle}
\newcommand{\beq}{\begin{equation}}
\newcommand{\eeq}{\end{equation}}
\newcommand{\ben}{\begin{eqnarray}}
\newcommand{\een}{\end{eqnarray}}
\newcommand{\beno}{\begin{eqnarray*}}
\newcommand{\eeno}{\end{eqnarray*}}
\newcommand{\andf}{\quad\hbox{and}\quad}
\newcommand{\with}{\quad\hbox{with}\quad}
\newtheorem{defi}{Definition}[section]
\newtheorem{lem}{Lemma}[section]
\newtheorem{rmk}{Remark}[section]
\newtheorem{prop}{Proposition}[section]
\newcommand{\vv}[1]{\boldsymbol{#1}}
\def\div{\text{div}\,}
\def\curl{\text{curl}\,}
\def\ga{\gamma}
\newtheorem*{Main Theorem}{Main Theorem}
\newtheorem{theorem}{Theorem}[section]
\numberwithin{equation}{section}
\begin{document}
\title[Strong solutions of CNS with short pulse type data]{Global strong solutions of 3D Compressible Navier-Stokes equations with short pulse type initial data}

\author[L.B. HE]{Ling-Bing He}
\address[L.B. He]{Department of Mathematical Sciences, Tsinghua University\\ Beijing, China}
\email{hlb@tsinghua.edu.cn}

\author[L. XU]{Li Xu}
\address[L. Xu]{School of Mathematical Sciences, Beihang University\\  100191 Beijing, China}
\email{xuliice@buaa.edu.cn}

\author[P. Zhang]{Ping Zhang}
\address[P. Zhang]{Academy of Mathematics $\&$ Systems Science
and  Hua Loo-Keng Key Laboratory of Mathematics, Chinese Academy of
Sciences, Beijing 100190, China, and School of Mathematical Sciences,
University of Chinese Academy of Sciences, Beijing 100049, China.}
\email{zp@amss.ac.cn}

\date{}

\begin{abstract} Short pulse initial datum is referred to the one   supported in the ball of radius $\delta$ and with amplitude $\delta^{\f12}$ which looks like a pulse. It was first introduced by Christodoulou to prove the formation of black holes for Einstein equations and also to  catch the shock formation for compressible Euler equations. The aim of this article is to consider the same type initial data, which   allow the   density of the fluid to have  large amplitude $\delta^{-\f{\alpha}{\gamma}}$ with $\delta\in(0,1],$  for the compressible Navier-Stokes equations. We prove the global well-posedness  and  show that the initial  bump region of the density with large amplitude will disappear within a very short time. As a consequence, we obtain the global dynamic behavior of the solutions and   the boundedness of $\|\na\vv u\|_{L^1([0,\infty);L^\infty)}$. The key ingredients of the proof lie in the new observations for the effective viscous flux and new decay estimates for the density via the Lagrangian coordinate.
\end{abstract}

\maketitle

\setcounter{tocdepth}{1}

\setcounter{equation}{0}
\section{Introduction}
The main purpose of this paper is to investigate the global existence and dynamic behavior of strong solutions to
  the following 3D barotropic compressible Navier-Stokes equations with short pulse type initial data:
\beq\label{CNS}\left\{\begin{aligned}
&\p_t\r+\div(\r\vv u)=0,\quad\text{in}\ (t,x)\in\R^+\times\R^3,\\
&\p_t(\r\vv u)+\div(\r\vv u\otimes\vv u)-\D\vv u+\na\r^\gamma=\vv 0,\\
&\r|_{t=0}=\r_0,\quad\vv u|_{t=0}=\vv u_0,\, \lim_{|x|\rightarrow\infty}\r(t,x)=1,
\end{aligned}\right.\eeq
where $\gamma\geq 1$. The unknown functions $\r=\r(t,x)\in\R^+$ and $ \vv u=\vv u(t,x)\in\R^3$ denote the density and velocity of the fluid respectively.
For simplicity, here  we only consider the case with only shear viscosity to be $1$. We remark that all the results in the present paper  hold also for the general 3D barotropic compressible Navier-Stokes equations with both shear and bulk viscosities.

\smallskip

\subsection{Short review of the previous work} Before introducing the short pulse type initial data, we review some classical results on the compressible Navier-Stokes equations.

$\bullet$  When the initial density is away from vacuum, the local well-posedness of the system \eqref{CNS} was proved in \cite{Nash, Solo76,Tani}.
 Matsumura and  Nishida \cite{MN1,MN2} proved the global existence and uniqueness of smooth solutions to the full compressible Navier-Stokes
 equations with initial data near constant state in Sobolev spaces. One may check \cite{Charve, CMZ, Dan-Inve} for the  results of this type concerning the well-posedness
 of \eqref{CNS} with initial data in the so-called critical  Besov spaces. With initial data of finite energy, Lions \cite{Lions98} and Feireisl
\cite{Feisl} proved the global existence of weak solutions to \eqref{CNS} for $\gamma>\f{d}2$ where $d$ designates the space dimension.

 $\bullet$ In general, the global well-posedness of the multi-dimensional
 compressible Navier-Stokes system with large data is widely open. Except in two space dimension, under the assumption that the bulk
 viscosity coefficient $\lambda=\r^\beta$ for $\beta>3,$ Va$\breve{i}$gant and   Kazhikhov \cite{VK95} proved the global existence and uniqueness of solutions to the isentropic compressible Navier-Stokes system. Huang and Li \cite{HL16} improved the previous result for $\beta>\f43$ even allowing the
 appearance of vacuum. Lately, Lu and the third author \cite{LYZ1} proved the global existence of smooth solutions to 2D barotropic compressible Navier-Stokes
 system with slow variable initial data.   Huang, Li and  Xin \cite{Huang} proved the global existence of classical solutions with small initial energy and allowing vacuum to the three dimensional isentropic compressible Navier-Stokes equations, and see \cite{Huang1} for the corresponding result of full compressible Navier-Stokes system.

\subsection{Short pulse type initial data} Short pulse initial datum is the one chosen to be supported in the ball of radius
 $\delta$ and with amplitude $\delta^{\f12}$ which looks like a pulse. They are often used to treat the focusing of incoming waves for quasilinear wave equations. It was first introduced by Christodoulou in the breakthrough work \cite{Christ} to show  dynamically the formation of
   black holes for Einstein equations.  When  the initial data  are additionally assumed to be irrotational and isentropic, in \cite{ChristMiao}, Christodoulou and Miao proved the   formation of shock for the classical, non-relativistic, compressible Euler equations. We also refer to the work by Miao and Yu  \cite{MiaoYu} on the geometric perspectives of shock formations for the 3-dimensional quasilinear wave equation with short pulse initial data. We finally mention the short pulse initial  data can also be used to construct the global and large smooth solutions for wave equations  with null conditions(see \cite{DXY1,MLP19}) and for 2D irrotational and isentropic Chaplypin gases system(see Ding-Xin-Yin's work \cite{DXY2}).

\smallskip

In the present work,  we shall impose the short pulse type data on the compressible Navier-Stokes equations. Comparing to the standard short pulse data, we allow the density to have the large amplitude. More precisely, let $\delta\in(0,1]$, then the typical data can be taken as follows:
\beq\label{initial data}\begin{aligned}
&\r_0(x)=\bigl(1+\delta^{-\al}\varphi({x}/{\delta})\bigr)^{\f{1}{\gamma}},\\
\vv u_0(x)=&\delta^{1-\al}
(\na\D^{-1}\varphi)({x}/{\delta})+\delta^{1-\f{\al}{2}}\vv v({x}/{\delta}),
\end{aligned}\eeq
where $\alpha$ is a positive constant, $\varphi\in\mathcal{S}(\R^3)$ is a non-negative function and $\vv v=(v^1,v^2,v^3)$ with  each component of which belongs to $\mathcal{S}(\R^3)$. In this situation, we are interested in the following questions:
 \smallskip

 {\bf (Q1):} In very recent work \cite{MRRS}, the authors construct a set of finite energy smooth initial data for which the corresponding solutions to the equations implode (with infinite density) at a later time at a point. But they consider the equations in the case that $\lim\limits_{|x|\rightarrow \infty}\r(t,x)=0$.  In our case, we require that $\lim\limits_{|x|\rightarrow \infty}\r(t,x)=1$. And the initial bump region of the density will become more and more singular in the process of $\delta$ approaching to zero. 
 It is natural to ask what is the exact behavior of evolution of the density. Will it induce the singularity? If not, can we describe the precise behavior of the density  and what is the impact on the velocity $\vv u$?

  {\bf (Q2):}  As we mentioned  before, the short  pulse data will induce the blow-up of $\na_x \vv u$ for the compressible Euler equations (see \cite{ChristMiao}). It is interesting to investigate the role of the viscosity in preventing the formation of the singularity.
\smallskip

Before going further, let us give   some comments on short pulse type  data:

\begin{enumerate}
\item[(i).]    When $\gamma=1$ and $\al=\f23-$,  \eqref{initial data} implies that $\|\na \rho_0\|_{L^2}\sim \delta^{-\f16+}$ and $\|\rho_0\|_{L^\infty}\sim \delta^{-\f23+}$. This shows that initially $\rho_0-1$ is sufficiently large even in the critical space.  Obviously it will bring troubles for the proof of global  existence.
\item[(ii).] The main feature of the short pulse type initial data lies in the fact that one more derivative on the  function will produce additional factor $\delta^{-1}$. It means that the Sobolev regularity and  $L^p$ energy of the data will be $\delta$-dependent and behave quite different from each other. This property indicates that we have to be cautious when we use interpolation method  to the estimates of the equations.
\end{enumerate}

 \subsection{Main results and  ideas of the proof}
We first introduce some notations as follows:

 $\bullet$ Let
\beq\label{def of a, q and dot u}
a\eqdefa\r^\gamma-1,\quad F\eqdefa\div\vv u-a,
\andf \dot{\vv u}\eqdefa\p_t\vv u+\vv u\cdot\na\vv u,
\eeq
where $F$ is the so-called ``effective viscous flux" which was introduced by Hoff in \cite{Hoff95}. In particular,
in view of the momentum equation of \eqref{CNS}, we denote
\beq\label{intital a, q dot u}\begin{aligned}
&\sqrt{\r_0}\dot{\vv u}_0\eqdefa\f{-\curl\curl\vv u_0+\na F_0}{\sqrt{\r_0}},\,\,
\text{with}\ F_0\eqdefa\div\vv u_0-a_0,\ a_0\eqdefa \r_0^\gamma-1.
\end{aligned}\eeq

\smallskip

 $\bullet$ We  recall the following potential energy functional
\beq\label{def of H rho}
H(\r)\eqdefa\int_{\R^3}h(\r(t,x))dx\quad\text{with}
\quad h(\r)\eqdefa \left\{\begin{aligned}
&\f{1}{\gamma-1}[(\r^\gamma-1)-\gamma(\r-1)],\,\,\text{if}\,\, \gamma>1,\\
&\r\ln\r-(\r-1),\,\,\text{if}\,\, \gamma=1.
\end{aligned}\right.
\eeq

To answer the questions {\bf (Q1,Q2)}, we divide our main results into three parts: global well-posedness,   global dynamic behavior of the lower order energy and the  uniform-in-time propagation of the regularity.

 \subsubsection{Main result (I): Global well-posedness}
 Our first main result is concerned with the global existence of strong solutions to \eqref{CNS} with data
 which in particular include "short pulse type initial data" introduced in \eqref{initial data}. To state it, we introduce the following  Sobolev spaces: for $q>2$, $T>0$,
 $$
W^{1,q}(\R^3)\eqdefa \left\{\ f\in L^q(\R^3)\,|\, \, \na f\in L^q(\R^3)\ \right\},
 $$
$$
W^{2,1}_q([0,T]\times\R^3)\eqdefa \left\{\ f\in L^q([0,T]\times\R^3)\,|\,\, \p_tf, \na^2 f\in L^q([0,T]\times\R^3)\ \right\}.
 $$
The homogeneous spaces $\dot{W}^{1,q}(\R^3)$ and $\dot{W}^{2,1}_q([0,T]\times\R^3)$ can be defined in a similar way.

\begin{theorem}\label{global existence thm}
{\sl Let $\delta\in(0,1]$,  $\gamma\geq 1$ and  $\al\in \bigl(0,\f{2\gamma}{1+{2\gamma}}\bigr]$. We assume that $\na\r_0\in L^q(\R^3)$ for some
$q\in (3,10/3]$  and $\vv u_0\in H^2(\R^3)$ which satisfy
\beq\label{initial ansatz for density}
\inf_{x\in\R^3}\r_0(x)\geq 1, \quad\|\r_0^\gamma-1\|_{L^\infty}=\|a_0\|_{L^\infty}= \delta^{-\al},
\eeq
and
\beq\label{initial ansatz}\begin{aligned}
\mathcal{E}_0\eqdefa&\delta^{\al-3}\bigl(\|(\sqrt{\r_0}\vv u_0, \curl\vv u_0, F_0)\|_{L^2}^2+H(\r_0)\bigr)\\
&+\delta^{2\al-1}\bigl(
\|(\sqrt{\r_0}\dot{\vv u}_0,\div\vv u_0)\|_{L^2}^2+\|a_0\|_{L^6}^2\bigr)= \e^2.
\end{aligned}\eeq
Then there exists a constant $\e_0>0$ which is independent of $\ \delta$ such that for any $\e\in(0,\e_0)$,
 \eqref{CNS} admits a unique global solution $(\r,\vv u)$ so that $\na\r\in L^\infty([0,T];L^q(\R^3)), \vv u\in W^{2,1}_q([0,T]\times\R^3)$ for any $T>0$ and
\beq\label{estimate for a}
\inf_{t>0,x\in\R^3}\r(t,x)\ge 2^{-\f1\gamma},\quad\|a\|_{L^\infty(\R^+;L^\infty)}\leq C\delta^{-\al},
\eeq
\beq\label{total enery estimate}\begin{aligned}
&\delta^{\al-3}\Bigl\{\sup_{t>0}\bigl(\|(\sqrt{\r}\vv u, \curl\vv u, F)(t)\|_{L^2}^2+ H(\r)(t)\bigr)
+\|(\na\vv u,\sqrt\r\dot{\vv u})\|_{L^2(\R_+;L^2)}^2\Bigr\}\\
&+\delta^{2\al-1}\Bigl\{\sup_{t>0}
\bigl(\|(\sqrt\r\dot{\vv u},\div \vv u)(t)\|_{L^2}^2
+\|a(t)\|_{L^6}^2\bigr)+\|\na\dot{\vv u}\|_{L^2(\R_+;L^2)}^2+\|a\|_{L^2(\R_+;L^6)}^2\Bigr\}\leq C\mathcal{E}_0.
\end{aligned}\eeq
Here and in what follows, $C>0$ is an universal constant which is independent of $\delta$. Furthermore, there holds
\beq\label{energy in H 2}\begin{aligned}
&\delta^{2\al-1+\f{\al}{\gamma}}\sup_{t>0}\|(\na\curl\vv u,\na F)(t)\|_{L^2}^2+\delta^{\al-3+\f{\al}{\gamma}}\|(\na\curl\vv u,\na F)\|_{L^2(\R_+;L^2)}^2
\leq C\mathcal{E}_0.
\end{aligned}\eeq
}
\end{theorem}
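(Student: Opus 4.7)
The plan is a continuity/bootstrap argument. Local existence for data satisfying $\inf \r_0 \geq 1$ with the stated $W^{1,q}$ and $H^2$ regularity is classical (Nash, Solonnikov, Tani, Matsumura--Nishida), giving a unique strong solution on some maximal interval $[0,T^*)$ in the class $\na \r \in L^\infty_T L^q$, $\vv u \in W^{2,1}_q$. I would extend the solution globally by showing that, for $\e_0$ small enough independently of $\delta$, the weighted bounds \eqref{estimate for a}--\eqref{energy in H 2} persist on any $[0,T] \subset [0,T^*)$ on which they are a priori assumed with doubled constants; this prevents any classical blow-up criterion from triggering and extends the solution to $[0,\infty)$.

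The first layer of a priori estimates is the basic energy identity
\[
\f{d}{dt}\Big(\tfrac12\|\sqrt{\r}\,\vv u\|_{L^2}^2 + H(\r)\Big) + \|\na \vv u\|_{L^2}^2 = 0,
\]
which after multiplication by $\delta^{\al-3}$ yields the first weighted quantity in \eqref{total enery estimate}. For the effective viscous flux $F=\div \vv u-a$ and for $\om=\curl \vv u$ I would use the momentum equation to derive the elliptic identities $\D F = \div(\r \dot{\vv u})$ and $-\D \om = \curl(\r\dot{\vv u})$. These identities are the ``new observations'' emphasized in the abstract: they decouple $F$ and $\om$ from the large amplitude $a$ at leading order, so even though $\div \vv u$ is not small, its effective-viscous-flux part $F$ is. Hoff's device of testing the momentum equation against $\dot{\vv u}$ then produces control of $\|\sqrt\r\,\dot{\vv u}\|_{L^2}$ and $\|\na\dot{\vv u}\|_{L^2_tL^2}$ at the weight $\delta^{2\al-1}$, which feeds back into the $L^p$-bounds on $(F,\om)$ through the elliptic identities.

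To control the density and recover the $L^6$-decay of $a$, I would pass to Lagrangian coordinates $y = X^{-1}(t,x)$, where $X$ is the flow of $\vv u$. Along trajectories, the mass equation rewritten as $\p_t a + \vv u\cdot\na a + \g(a+1)(F+a) = 0$ becomes an ODE for $a$ driven by $F$, and the smallness of $\int_0^t \|F\|_{L^\infty}\,ds$ (obtained from the elliptic estimates, the Morrey/Sobolev embedding allowed by $q>3$, and the already-proven bound on $\dot{\vv u}$) yields both $\r \geq 2^{-1/\g}$ from below and the decay of $\|a\|_{L^6}$. The thresholds $\al \leq 2\g/(1+2\g)$ and $q \in (3, 10/3]$ are exactly what is needed for the interpolation bounding $\|\div\vv u\|_{L^1_t L^\infty}$ in terms of the weighted norms of $F$, $\om$ and $a$. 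The higher-regularity bound \eqref{energy in H 2} is then recovered from $W^{1,6}$-elliptic estimates for $(F,\om)$ combined with the decomposition $\na \vv u = \na(-\D)^{-1}(\na F + \na a + \na\wedge\om)$, with the additional factor $\delta^{\al/\g}$ coming from the density weight on the right-hand side.

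The main obstacle is reconciling the huge amplitude $\|a\|_{L^\infty}\sim \delta^{-\al}$ with the requirement that every nonlinear term behave as a small perturbation once multiplied by the carefully chosen $\delta$-weights. The resolution at each step is to work with $F$ rather than $\div \vv u$ and to let the dissipation of $F$ through the elliptic identities absorb the worst negative powers of $\delta$; the remaining nonlinearities are either genuinely $\mathcal{O}(\e)$ by the initial ansatz \eqref{initial ansatz} or absorbed by the time-integrable pieces $\|\na \vv u\|_{L^2_tL^2}^2$ and $\|\na\dot{\vv u}\|_{L^2_tL^2}^2$. Once the bootstrap closes, the local solution extends globally and \eqref{estimate for a}--\eqref{energy in H 2} follow.
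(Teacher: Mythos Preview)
Your overall strategy matches the paper's: a bootstrap on the weighted norms, the basic energy identity plus Hoff-type estimates obtained by testing against $\dot{\vv u}$, the elliptic identities $\D F=\div(\r\dot{\vv u})$ and $\D\curl\vv u=\curl(\r\dot{\vv u})$, and pointwise control of $\rho$ along characteristics using the smallness of $\|F\|_{L^2_tL^\infty}$. Two points, however, need correction.

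First, the role of $q\in(3,10/3]$ is misidentified. The $L^\infty$ control of $F$ that feeds the Lagrangian ODE for $\rho$ does \emph{not} use $q>3$ or a Morrey embedding; it comes from $\|F\|_{L^\infty}\lesssim\|\na F\|_{L^2}^{1/2}\|\na F\|_{L^6}^{1/2}\lesssim\|\r\dot{\vv u}\|_{L^2}^{1/2}\|\r\dot{\vv u}\|_{L^6}^{1/2}$ together with $\|\dot{\vv u}\|_{L^6}\lesssim\|\na\dot{\vv u}\|_{L^2}$, all already available from the bootstrap quantities. The restriction on $q$ enters only in the separate step of propagating $\na\rho\in L^q$ and $\vv u\in W^{2,1}_q$: $q>3$ is needed for a logarithmic $\|\na\vv u\|_{L^\infty}$ inequality, and $q\le 10/3$ guarantees $\|\dot{\vv u}\|_{L^q}^q\lesssim\|\dot{\vv u}\|_{L^2}^{3-q/2}\|\na\dot{\vv u}\|_{L^2}^{3(q/2-1)}\in L^1_t$. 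These higher-regularity bounds are not uniform in $\delta$ or $T$; they serve only to rule out finite-time blow-up.

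Second, and this is a genuine gap for the claimed range of $\al$, you do not say how to control $\|a\|_{L^3}$, which enters the nonlinear terms of the $(\curl\vv u,F,\sqrt\r\,\dot{\vv u})$ estimates through $\|\na\vv u\|_{L^3}\lesssim\|\curl\vv u\|_{L^3}+\|F\|_{L^3}+\|a\|_{L^3}$. The obvious route $\|a\|_{L^3}\le\|a\|_{L^2}^{1/2}\|a\|_{L^6}^{1/2}$ with $\|a\|_{L^2}\le\|\div\vv u\|_{L^2}+\|F\|_{L^2}$ gives only $\|a\|_{L^3}\lesssim\e\,\delta^{(1-2\al)/2}$, and after multiplication by $\|\r\|_{L^\infty}^{1/2}\sim\delta^{-\al/(2\gamma)}$ this closes the bootstrap only for $\al\le\gamma/(1+2\gamma)$, half the stated range. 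The paper's key device here is an interpolation that treats $H(\r)$ as a surrogate for $\|a\|_{L^1}$: for $R>1$,
\[
\|a\|_{L^3}^3\le 4\gamma R^{2}H(\r)+R^{-3}\|a\|_{L^6}^6,
\]
and choosing $R\sim\delta^{-\al}$ yields $\|a\|_{L^3}\lesssim\e^{2/3}\delta^{1-\al}$. It is precisely this exponent, combined with the density factor $\delta^{-\al/(2\gamma)}$, that produces the threshold $\al\le 2\gamma/(1+2\gamma)$. Without this step your assertion that ``the remaining nonlinearities are genuinely $\mathcal{O}(\e)$'' is not justified for the full range of $\al$.
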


 \begin{rmk}
 \begin{enumerate}
\item[(1).]  We normalize the constants in \eqref{initial ansatz for density} for simplicity. Indeed, if  \eqref{initial ansatz for density} is  replaced by
\[\inf_{x\in\R^3}\r_0(x)\geq \underline{c} , \quad\|\r_0^\gamma-1\|_{L^\infty}=\|a_0\|_{L^\infty}= \bar{c}\delta^{-\al},\]  where $\underline{c}$ and $\bar{c}$ are two universal constants, our theorems here and hereafter  still hold  with the related constants depending on $\underline{c}$ and $\bar{c}$.

\item[(2).] The ansatzes \eqref{initial ansatz for density} and \eqref{initial ansatz} on the parameter $\delta$  stem from the typical example \eqref{initial data}. For $(\r_0,\vv u_0)$ defined by \eqref{initial data}, we have
\beno\begin{aligned}
&a_0=\delta^{-\al}\varphi({x}/{\delta}),\quad\div\vv u_0=\delta^{-\al}\varphi({x}/{\delta})+\delta^{-\f{\al}{2}}\div\vv v({x}/{\delta}),\\
&\curl\vv u_0=\delta^{-\f\al2}\curl\vv v({x}/{\delta}),\quad F_0=\delta^{-\f{\al}{2}}\div\vv v({x}/{\delta})\\
 \andf &\sqrt{\r_0}\dot{\vv u}_0=\delta^{-1-\f\al2}\r_0^{-\f12}(x)\Delta\vv v({x}/{\delta}).
\end{aligned}\eeno
which together with \eqref{def of H rho} implies that for $\gamma>1$,
\beno\begin{aligned}
&H(\r_0)\leq(\gamma-1)^{-1}\|a_0\|_{L^1}\sim O( \delta^{3-\al}),\quad\|\curl\vv u_0\|_{L^2}^2\sim
\|F_0\|_{L^2}^2\sim O( \delta^{3-\al}),\\
&\|a_0\|_{L^2}^2\sim O( \delta^{3-2\al}),\quad \|a_0\|_{L^6}^2\sim O( \delta^{1-2\al}).
\end{aligned}\eeno

\item[(3).] The ansatzs \eqref{initial ansatz for density} and \eqref{initial ansatz} show that  the initial density may be sufficiently large in   $L^\infty$ norm while the initial velocity may be small in the critical space. In general, with initial
density being a small perturbation of some positive constant in  $L^\infty$ norm and with initial velocity in the slightly subcritical
spaces, Danchin, Fanelli and Paicu \cite{DFM20} proved local existence of weak solutions of \eqref{CNS} but without uniqueness.

\item[(4).] If in addition $\vv u_0,\rho_0-1\in H^3(\R^3)$ and $\delta=1$, by interpolation theory,  \eqref{initial ansatz} can be reduced to the smallness assumption only on  the initial  energy, $\|\sqrt{\r_0}\vv u_0\|^2_{L^2}+H(\r_0)$. Then Theorem 1.1 of \cite{Huang} ensures the
    global well-posedness part of Theorem \ref{global existence thm} here. Yet we remark that whether or not the density function will be away from vacuum when the initial density has a positive lower bound is one of the most interesting open questions for multi-dimensional compressible Navier-Stokes system.
    Here we present a uniform lower bound for the density function in \eqref{estimate for a}.
    \end{enumerate}
\end{rmk}

\begin{rmk} We first remark that (\ref{estimate for a}) and (\ref{total enery estimate}) show that the profile of the initial data w.r.t. $\delta$ parameter can be propagated for all the time. This in particular shows that the formation of the singularity for the density does not occur. However we have no idea that the high order  norms enjoy the same property. What we can show is that they  are finite for any positive time $t$ (for instance, $\|\na\rho(t)\|_{L^q(\R^3)}$ and $\|\na^2\vv u(t)\|_{L^2(\R^3)}$). However this is crucial  to get the uniqueness part of Theorem \ref{global existence thm}.
\end{rmk}

\begin{rmk}
Similar result as in Theorem \ref{global existence thm} holds for the following general 3D barotropic compressible Navier-Stokes equations{}
\beq\label{CNS mu}\left\{\begin{aligned}
&\p_t\r+\div(\r\vv u)=0,\\{}
&\p_t(\r\vv u)+\div(\r\vv u\otimes\vv u)-\mu\D\vv u-\lambda\na\div\vv u+\na\r^\gamma=\vv 0,\\
&\r|_{t=0}=\r_0,\quad\vv u|_{t=0}=\vv u_0,
\end{aligned}\right.\eeq
where $\mu>0$ and $\lambda\geq0$ are shear and bulk viscosities respectively. As a matter of fact,
let $\nu\eqdefa \mu+\lambda$ and $F_0\eqdefa\div\vv u_0-\f{a_0}{\nu}$.
If we replace the ansatz \eqref{initial ansatz} by
\beno\begin{aligned}
\mathcal{E}_0^{\mu,\nu}\eqdefa&\delta^{\al-3}\mu^{-\f72}\nu^{\f32}\bigl(\gamma\nu^{-1}\bigl(\|\sqrt{\r_0}\vv u_0\|_{L^2}^2+H(\r_0)\bigr)+\mu\|\curl\vv u_0\|_{L^2}^2+\nu\|F_0\|_{L^2}^2\bigr)\\
 &+\delta^{2\al-1}\mu^{-\f32}\nu^{\f12}\bigl(
\|\sqrt{\r_0}\dot{\vv u}_0\|_{L^2}^2+\gamma\|\div\vv u_0\|_{L^2}^2+\mu^{-2}\nu^2\|a_0\|_{L^6}^2\bigr)=\e^2,
\end{aligned}\eeno
then under the assumptions of  Theorem \ref{global existence thm}, there exists sufficiently small $\e_0>0$ which  is independent of  $\mu,\nu$ and $\delta$ such that, for any $\e\in(0,\e_0)$, \eqref{CNS mu} has a unique global solution $(\r,\vv u)$. And similar energy estimate as \eqref{total enery estimate} still holds.
\end{rmk}

\subsubsection{Main result(II): Global dynamic behavior of the lower order  energy}

The second result is about the global dynamic behavior of the solutions. In particular, we give the precise estimates on the behavior of the density in $L^\infty$ space.
\begin{theorem}\label{global dynamics thm}
{\sl Let $\delta\in\bigl(0, 2^{-\f{15}{\al}}\bigr)$ and $N_0\in\N_{\geq 15}$ so that $\delta^{-\al}\in\bigl[2^{N_0},2^{N_0+1}\bigr).$
In addition to the assumptions of Theorem \ref{global existence thm}, we assume moreover that
 $\r_0-1,\,\r_0\vv u_0\in L^1(\R^3)$ which  satisfies
\beq\label{initial ansatz in L 1}
\|\r_0-1\|_{L^1}+\|\r_0\vv u_0\|_{L^1}\leq C\delta^{3-\al}.
\eeq
Let  $T_0\eqdefa \f{7}{8\gamma}\bigl(2^{-14}-2^{-N_0}\bigr)\in\bigl( 7\gamma^{-1}\cdot2^{-18},\ 7\gamma^{-1}\cdot2^{-17}\bigr)$.

\begin{enumerate}

\item[(1)]{\bf(Behavior of $\|a(t)\|_{L^\infty}$)}  There exist  $t_j\eqdefa{7}{\gamma}^{-1}\sum_{j'=1}^j2^{-N_0-4+j'}$ with $j=1,2,\cdots, N_0-14$ and
 $t_0=0$ so that
\beq\label{dynamic for a in L infty}
\|a(t)\|_{L^\infty}\leq
\left\{\begin{aligned}
&\Bigl(\f14(1+2^{N_0+1-j})^{-1}+\gamma(t-t_{j-1})\Bigr)^{-1}\leq C\delta^{-\al},\\
&\qquad\qquad\text{for }\ t\in[t_{j-1},t_j], \quad j=1,2,\cdots, N_0-14, \\
&C(e^{-\f{\gamma}{2}t}+\e\delta^{1-\f34\al}),\quad\text{for }\ t\geq T_0= t_{N_0-14}=\f{7}{8\gamma}\bigl(2^{-14}-2^{-N_0}\bigr).
\end{aligned}\right.
\eeq
If  $T_2\eqdefa(\e\delta^{1-\f34\al})^{-2}$, there holds moreover that
\beq\label{dynamic for a in L 1 L infty}
\int_0^{T_0}\|a(t)\|_{L^\infty}\,dt\leq 3\gamma^{-1}\al\ln\delta^{-1}\quad\text{and}\quad
\int_{T_0}^{4T_2}\|a(t)\|_{L^\infty}\,dt\leq C.
\eeq

\item[(2)]{\bf(Behavior of $\|a(t)\|_{L^6}$)} It holds that
\beq\label{dynamic for a in L 6}
\|a(t)\|_{L^6}^2\leq
\left\{\begin{aligned}
&C\e^2\delta^{1-2\al}, \quad\text{if }\ t\leq T_0,\\
&C\e^2\bigl(\delta^{1-2\al}e^{-\f{t}{6}}+\delta^{3-\al}\bigr), \quad\text{if}\ t\geq T_0.
\end{aligned}\right.
\eeq

\item[(3)]{\bf(Decay of the lower order energy)}
There exists a constant $c_0\in(0,\f12)$ and  $T_*\eqdefa\f{1}{c_0}>T_0$ so that
\beq\label{decay of energy 1}\begin{aligned}
\|(\vv u, \na\vv u, F,\dot{\vv u})(t)\|_{L^2}^2+\|a(t)\|_{L^6}^2\leq C\delta^{1-2\al}\w{t}^{-2}+C\delta^{\f72-\f{13}{4}\al}\w{t}^{-\f32},\quad\forall\ t\geq T_*,
\end{aligned}\eeq
and
\beq\label{decay of energy 2}\begin{aligned}
\sup_{t\geq T}\bigl(\|(\vv u, \na\vv u, F, \dot{\vv u})(t)\|_{L^2}^2&+\|a(t)\|_{L^6}^2\bigr)
+\int_{T}^\infty\bigl(\|(\na\vv u, \dot{\vv u}, \na\dot{\vv u})(t)\|_{L^2}^2+\|a(t)\|_{L^6}^2\bigr)\,dt\\
&\qquad\leq C\delta^{1-2\al}\w{T}^{-2}+C\delta^{\f72-\f{13}{4}\al}\w{T}^{-\f32},
\quad\forall\ T\geq T_*.
\end{aligned}\eeq
Here and in all that follows, we always denote $\w{t}\eqdefa (1+t^2)^{\f12}.$
\end{enumerate}
}
\end{theorem}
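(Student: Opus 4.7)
The plan is to exploit the transport structure of the continuity equation by working in Lagrangian coordinates. Setting the material derivative $\dot f\eqdefa\pa_tf+\vv u\cdot\nabla f$ and using $F=\div\vv u-a$, the equation for $a=\r^\gamma-1$ reduces, along each trajectory, to the forced Riccati equation
$$\dot a+\gamma(1+a)(F+a)=0.$$
Without the $F$-forcing one has the pointwise bound $a(t)\leq 1/(a_0^{-1}+\gamma t)$, which for initial amplitude $\sim 2^{N_0}$ halves over a time $\sim\gamma^{-1}2^{-N_0+j-1}$ during the $j$-th halving. Summing these intervals with a fixed safety factor produces precisely the dyadic partition $(t_j)_{j=0}^{N_0-14}$ and the threshold time $T_0=t_{N_0-14}$ appearing in the statement.

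For part (1) I would run a continuity argument on each window $[t_{j-1},t_j]$: posit the pointwise envelope $g_j(t)\eqdefa\bigl[\tfrac14(1+2^{N_0+1-j})^{-1}+\gamma(t-t_{j-1})\bigr]^{-1}$ and verify it holds through the integrated form of the Riccati ODE. The $F$-contribution is controlled via Theorem~\ref{global existence thm}, which supplies $\|F\|_{L^\infty_tL^2}^2\lesssim\delta^{3-\al}\mathcal E_0$ and $\|(\na F,\na\curl\vv u)\|_{L^2_tL^2}^2\lesssim\delta^{3-\al-\al/\gamma}\mathcal E_0$. Combining these with Gagliardo--Nirenberg and H\"older in time bounds $\int_{t_{j-1}}^{t_j}\|F(\tau)\|_{L^\infty}\,d\tau$ by a positive power of $\delta$ times $(t_j-t_{j-1})^{1/4}$, which is much smaller than the Riccati margin $\sim 2^{-N_0+j}$ opened on that window. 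Each step contributes at most $\ln 2/\gamma$ to the $L^1_tL^\infty_x$ integral of $a$, and summing $N_0-14\sim\al\ln\delta^{-1}/\ln 2$ steps yields \eqref{dynamic for a in L 1 L infty}. Beyond $T_0$ the amplitude is $O(1)$, so the Riccati equation linearises to $\dot a+\gamma a\approx-\gamma F$, and Duhamel gives the perturbative decomposition $Ce^{-\gamma t/2}+C\e\delta^{1-3\al/4}$ of \eqref{dynamic for a in L infty}; the long-time integral on $[T_0,4T_2]$ follows by Cauchy--Schwarz against the $L^2_t$-dissipation bounds.

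Part (2) is an $L^6$ energy estimate. Testing the equation for $a$ against $6a^5$ and integrating by parts produces the coercive identity
$$\tfrac{d}{dt}\|a\|_{L^6}^6+6\gamma\|a\|_{L^6}^6\lesssim \|a\|_{L^\infty}\|a\|_{L^6}^6+\|F\|_{L^\infty}\bigl(\|a\|_{L^6}^6+\|a\|_{L^6}^5\|a\|_{L^\infty}\bigr).$$
On $[0,T_0]$ the right-hand side is absorbed using the $L^1_tL^\infty_x$ bounds on $a$ and $F$ from part (1) and Theorem~\ref{global existence thm}, giving $\|a(t)\|_{L^6}^2\leq C\e^2\delta^{1-2\al}$. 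Beyond $T_0$ the dissipation $6\gamma\|a\|_{L^6}^6$ dominates and generates the exponential factor $e^{-t/6}$ (after the standard $\tfrac16$-normalisation), while the residual nonlinearity contributes $C\e^2\delta^{3-\al}$, matching \eqref{dynamic for a in L 6}.

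For the polynomial decay in part (3), I would run a time-weighted energy estimate on $[T_*,\infty)$, multiplying the Hoff-type energy--dissipation identity for $\|(\vv u,\na\vv u,F,\dot{\vv u})\|_{L^2}^2+\|a\|_{L^6}^2$ by weights $\w{t}^k$ with $k\in\{3/2,2\}$. The additional $L^1$ assumption \eqref{initial ansatz in L 1} provides the low-frequency information needed for the classical $\w{t}^{-3/2}$ decay of the linearised compressible Navier--Stokes semigroup, while the short-pulse profile of the initial data delivers the sharper $\w{t}^{-2}$ contribution. The main obstacle, and the reason part (3) must come last, is absorbing the quadratic nonlinearities $a\,\na\vv u$ and $\vv u\cdot\na a$ into the dissipation: this requires the smallness $\|a(t)\|_{L^\infty}\lesssim\e\delta^{1-3\al/4}$ from (1), the exponential $L^6$-decay from (2), and the $H^1$-control of $\dot{\vv u}$ from Theorem~\ref{global existence thm}. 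A splitting of $a$ into an initial-data piece (contributing $\delta^{1-2\al}\w{t}^{-2}$) and a dissipation-driven piece (contributing $\delta^{7/2-13\al/4}\w{t}^{-3/2}$) then closes the bootstrap and yields both \eqref{decay of energy 1} and its integrated form \eqref{decay of energy 2}.
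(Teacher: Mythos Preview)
Your outline for parts (1) and (2) is broadly on track and close to what the paper does. For part (1), the paper works with the ODE for $\r^{-\gamma}=(1+a)^{-1}$ rather than for $a$ itself---a useful substitution, since the forcing term becomes $\r^{-\gamma}F$, which is small precisely where $a$ is large---and it also decomposes $\R^3$ dyadically according to the level of $\r$ at each time step, handling the low-amplitude region separately via an upper-bound lemma. Your envelope/continuity proposal would need both refinements to close, but the Riccati heuristic and the $L^1_tL^\infty_x$ count are correct. For part (2), the coercive $L^6$ estimate is indeed the mechanism; the paper places $F$ in $L^6$ (via $\|F\|_{L^6}\lesssim\|\na F\|_{L^2}\lesssim\|\r\dot{\vv u}\|_{L^2}$) rather than in $L^\infty$, and the bound on $[0,T_0]$ is taken directly from \eqref{total enery estimate} without a new argument.

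Part (3) is where your plan has a genuine gap. The time-weighted energy method you describe is the standard Matsumura--Nishida route, and the paper deliberately avoids it: absorbing the quadratic nonlinearities via smallness of $\|a\|_{L^\infty}$ is only possible once $t\gtrsim T_1=2\gamma^{-1}\ln(\e\delta^{1-\f34\al})^{-1}$, whereas the decay \eqref{decay of energy 1} is claimed from a \emph{universal} time $T_*$ independent of $\delta$. On $[T_*,T_1]$ one only has $\|a(t)\|_{L^\infty}=O(1)$, so the linearised structure does not dominate and your bootstrap does not close there. The paper instead uses a Wiegner--Schonbek frequency splitting: from the energy--dissipation inequality $\f{d}{dt}E+D\le 0$ (valid for $t\ge T_0$ since then $\r\sim 1$) one writes $D(t)\gtrsim r^2\w{t}^{-1}E(t)$ minus a low-frequency remainder $\int_{|\xi|\le r\w{t}^{-1/2}}\bigl(|\widehat{\varrho}|^2+|\widehat{\r\vv u}|^2\bigr)\,d\xi$, and a key lemma bounds this remainder directly from the transport and momentum equations together with the $L^1$ assumption \eqref{initial ansatz in L 1}. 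The essential trick is to work with $\widehat{\r\vv u}$ rather than $\widehat{\vv u}$, so that no smallness of $\r-1$ is ever invoked. A rough bound on the low-frequency integral followed by two bootstrap refinements then yields the two-scale decomposition $\delta^{1-2\al}\w{t}^{-2}+\delta^{\f72-\f{13}4\al}\w{t}^{-\f32}$.
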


\begin{rmk}
\begin{enumerate}
\item[(1)] Our theorem holds also for $\delta\in[2^{-\f{15}{\alpha}},1]$ with $T_0=0$ and $T_2=\e^{-2}$. In this case, the parameter $\delta$ can be regarded as an universal constant and thus our proof used here is still valid.

\item[(2)] We remark that the condition \eqref{initial ansatz in L 1} will only be used in the proof of
the decay estimate \eqref{decay of energy 1}. In fact, we only need the boundedness of the quantity
$\|\r_0-1\|_{L^1}+\|\r_0\vv u_0\|_{L^1}.$ The reason why we assume \eqref{initial ansatz in L 1} is to make it
to be consistent with the assumption \eqref{initial ansatz}.
\end{enumerate}
\end{rmk}

\begin{rmk}
\begin{enumerate}
  \item[(1)] Roughly speaking, \eqref{dynamic for a in L infty}   indicates that the initial bump region of the density with large amplitude will disappear within a very short time which can be regarded as the characteristic phenomenon induced by the short pulse type initial data.

    \item[(2)] To make \eqref{dynamic for a in L infty} clear, we first remark that   at the very beginning,  the density function decays as fast as the solution to the toy model:   $\f{d}{dt}f+\gamma f^2=0$ with $f|_{t=0}=\delta^{-\alpha}$. Indeed, the toy model has the explicit solution, i.e., $f(t)=(\delta^\alpha+\gamma t)^{-1}$. Due to the monotonic property of $f(t)$, one may introduce the dyadic decomposition to the solution itself and calculate that in which interval, $[t_{j-1},t_j]$, the solution  $f(t)$ will behave like $2^{N_0+1-j}$ with $1\le j\le N_0-14$. For our case, this leads to the time sequence $\{t_j\}_{1\le j\le N_0-14}$ and the corresponding estimates in \eqref{dynamic for a in L infty}. We emphasize that it is crucial to get  \eqref{dynamic for a in L 1 L infty}.

     \item[(3)] The factor $\ln \delta^{-1}$ in \eqref{dynamic for a in L 1 L infty} is very important to  the propagation of $L^p$ norm for the derivative of the density. As a direct consequence, we show that it has single exponential growth(i.e.,\, $\exp(\delta^{-\th})$  with some $\th>0$). We refer to \eqref{high order energy estimate} for more details.  This improvement gives us  the chance to prove the third result.

\end{enumerate}
\end{rmk}

\begin{rmk} \begin{enumerate}
\item[(1)]  \eqref{decay of energy 1} and \eqref{decay of energy 2}, the decay estimates of the lower order energy, show that eventually we can get the sharp decay rate, $\w{t}^{-\f34},$ of the convergence to the equilibrium for $\|\vv u\|_{L^2}$ and $\|\rho-1\|_{L^2}$ by the assumption \eqref{initial ansatz in L 1}. We emphasize that the sharpness means that it is consistent with the linearized theory in \cite{MN1}.
  \item[(2)]  To get the decay estimate of the lower order energy, we do not follow the standard perturbation framework (see \cite{MN1}) since initially the density is far away from the constant state.  Instead, we use the time-frequency splitting method to calculate   $\int_{|\xi|\lesssim\langle t\rangle^{-1/2}}\bigl(\gamma|\widehat{{(\rho-1)}}(t,\xi)|^2+|\widehat{\r\vv u}(t,\xi)|^2\bigr)\,d\xi$.
This strategy is motivated by the one introduced  by Wiegner (see \ccite {Wiegner}) and by Schonbeck (see \ccite{Schonbek}, \ccite{Schonbek2}) to derive the decay estimate for the solutions of classical Navier-Stokes equation (see also \cite{CZ6} for the
application of this method to 2D inhomogeneous Navier-Stokes
system).
\end{enumerate}
\end{rmk}

\subsubsection{Main result(III): uniform-in-time propagation of high order regularity} Based on the decay estimates of the lower order energy, we further investigate the uniform-in-time propagation of the higher order regularity. Before that, we recall
the following definition of the Besov norms from \cite{bcd} for
instance.

\begin{defi}
\label {S0def1} {\sl  Let us consider a smooth function~$\vf $
on~$\R,$ the support of which is included in~$[3/4,8/3]$ such that
$$
\forall
 \tau>0\,,\ \sum_{j\in\Z}\varphi(2^{-j}\tau)=1\andf \chi(\tau)\eqdefa 1 - \sum_{j\geq
0}\varphi(2^{-j}\tau) \in \cD([0,4/3]).
$$
Let us define
\beq \label{S1eq18}
\dot\Delta_ja\eqdefa\cF^{-1}(\varphi(2^{-j}|\xi|)\widehat{a}),
 \andf \Delta_{-1}a\eqdefa\cF^{-1}(\chi(|\xi|)\widehat{a}).
\eeq
Let $(p,r)$ be in~$[1,+\infty]^2$ and~$s$ in~$\R$. We define the Besov norm by
$$
\|a\|_{\dB^s_{p,r}}\eqdefa\big\|\big(2^{js}\|\dot\Delta_j
a\|_{L^{p}}\big)_j\bigr\|_{\ell ^{r}(\ZZ)}.
$$
We  denote the constant $ \bar{c}_*\eqdefa \|\mathcal{F}^{-1}\bigl(\f{\xi\otimes\xi}{|\xi|^2}\varphi(|\xi|)\bigr)\|_{L^1}$ in the rest of this paper.
}
\end{defi}

\begin{theorem}\label{propagation regularity thm}
{\sl Let
$\al\leq\min\Bigl\{\f{1+\f{1}{3\gamma}}{2+\f{1}{4\gamma}},\ \f{1}{24\bar{c}_*+\f{7}{4}}\Bigr\}$ and $\delta\in\bigl(0,2^{-\f{15}{\al}}\bigr).$  We assume in addition that
\beq\label{initial ansatz for na a in L 6}
\delta^{\al-3}\bigl(\|\r_0-1\|_{L^1}+\|\r_0\vv u_0\|_{L^1}\bigr)+\delta^{2\al-1}\|\na a_0\|_{L^2}^2+\delta^{2\al+1}\|\na a_0\|_{L^6}^2\leq C.
\eeq
Then under the assumptions of Theorem \ref{global existence thm},
\eqref{CNS} has a unique global solution $(\r,\vv u)$ such that $a\in C(\R_+;H^1\cap W^{1,6}(\R^3))$, $\vv u\in C(\R_+;H^2(\R^3))$ and
\beq\label{high order energy estimate}
\|(\na a,\na^2\vv u)(t)\|_{L^2}^2\leq
\left\{\begin{aligned}
&C\exp\Bigl(C\delta^{-(24\bar{c}_*+1)\al}\Bigr),\quad\text{for }\ t\in[0,2T_2]\\
&C\e^2\delta^{2-\f32\al},\quad\text{for }\ t\geq 2T_2.
\end{aligned}\right.
\eeq
where $T_2\eqdefa(\e\delta^{1-\f34\al})^{-2}$.
Moreover, there hold
\beq\label{dynamics of na a in L 6}
\|\na a(t)\|_{L^6}^2\leq
\left\{\begin{aligned}
&C\exp\Bigl(C\delta^{-(24\bar{c}_*+1)\al}\Bigr),\quad\text{for}\  t\in[0,2T_2],\\
&C\e^3\delta^{3-\f72\al},\quad\text{for }\ t\in[2T_2,4T_2],\\
&C\e^3\delta^{5(1-\al)},\quad\text{for }\ t\in[4T_2,\infty),
\end{aligned}\right.
\eeq
and
\beq\label{energy in critical space}
\sup_{t\geq 2T_2}\bigl(\|(\vv u,\r-1)(t)\|_{\dB^{\f12}_{2,1}}+\|(\r-1)(t)\|_{\dB^{\f34}_{4,1}}\bigr)+\int_{2T_2}^\infty\|\na\vv u(t)\|_{L^\infty}dt\leq C\e^{\f54}\delta^{\f{21}{32}}.
\eeq}
\end{theorem}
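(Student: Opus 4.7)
The plan is a three-stage bootstrap, split at $T_0$, $2T_2$, and $4T_2$. The starting point is the gradient form of the continuity equation. Writing $\div\vv u = F+a$, the density equation $\p_t a+\vv u\cdot\na a+\gamma(a+1)\div\vv u=0$ becomes, after differentiation,
\[
\p_t\na a+\vv u\cdot\na(\na a)+(\na\vv u)^\top\na a+\gamma(a+1)\na F+\gamma(2a+1)\na a=0.
\]
Simultaneously the momentum equation gives $-\Delta\vv u=-\na F-\na a-\curl\curl\vv u$, so that modulo Riesz multipliers $\|\na^2\vv u\|_{L^p}$ is controlled by $\|\na F\|_{L^p}+\|\na a\|_{L^p}+\|\na\curl\vv u\|_{L^p}$. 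The estimates \eqref{total enery estimate} and \eqref{energy in H 2} already supply the $\delta$-weighted $L^2_tL^2$ norms of $\na F$ and $\na\curl\vv u$, so the task reduces to controlling $\|\na a\|_{L^2}$ and $\|\na a\|_{L^6}$.

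\textbf{Stage 1: growth on $[0,2T_2]$.} Multiplying the transport equation by $|\na a|^{p-2}\na a$ with $p\in\{2,6\}$ and integrating gives
\[
\f{d}{dt}\|\na a\|_{L^p}\lesssim\bigl(\|\na\vv u\|_{L^\infty}+\|a\|_{L^\infty}\bigr)\|\na a\|_{L^p}+\|\na F\|_{L^p}.
\]
The crucial step is to bound $\|\na\vv u\|_{L^\infty}$ by a Brezis--Wainger/Kozono--Taniuchi-type logarithmic inequality. Inserting the elliptic identity above and noting that the Riesz transform of $a$ contributes precisely with the constant $\bar c_*$, one obtains
\[
\|\na\vv u\|_{L^\infty}\leq \bar c_*\|a\|_{L^\infty}\log\bigl(e+\|\na a\|_{L^6}+\|\na^2\vv u\|_{L^2}\bigr)+\cR,
\]
where $\cR$ collects terms already controlled by the lower-order energies of Theorems \ref{global existence thm} and \ref{global dynamics thm}. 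Plugging this into Gronwall transforms the growth factor into $\exp\bigl(C\bar c_*\int_0^t\|a\|_{L^\infty}ds\bigr)$ times a polynomial in $\|\na a\|_{L^6}+\|\na^2\vv u\|_{L^2}$. The decisive input is \eqref{dynamic for a in L 1 L infty}, which caps $\int_0^{T_0}\|a\|_{L^\infty}dt$ by $3\gamma^{-1}\alpha\ln\delta^{-1}$ and so converts the exponential into the plain power $\delta^{-C\bar c_*\alpha}$. Past $T_0$ the pointwise bound $\|a\|_{L^\infty}\leq C(e^{-\gamma t/2}+\e\delta^{1-3\alpha/4})$ from \eqref{dynamic for a in L infty} keeps the Gronwall factor tame on $[T_0,2T_2]$. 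Totalling the contributions for $p=2$ and $p=6$ and then reconstructing $\na^2\vv u$ yields the claimed $\exp(C\delta^{-(24\bar c_*+1)\alpha})$ bound; the multiplier $24$ tallies the number of times the logarithmic inequality is invoked across the two exponents $p$ and the $\na^2\vv u$ reconstruction.

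\textbf{Stage 2: smallness after $2T_2$.} By \eqref{decay of energy 2} the lower-order quantities are already $O(\e^2\delta^{2-3\alpha/2})$ at $t=2T_2$. Rerunning the energy identity of Stage 1, but now with $\|a\|_{L^\infty}$ of order $\e\delta^{1-3\alpha/4}$ and $\|\vv u\|_{H^1}$ small, permits every Gronwall-type right-hand side to be absorbed, producing the uniform bound $\|(\na a,\na^2\vv u)(t)\|_{L^2}^2\leq C\e^2\delta^{2-3\alpha/2}$ for $t\geq 2T_2$. A further iteration on $[4T_2,\infty)$, exploiting the additional decay gained between $2T_2$ and $4T_2$, sharpens $\|\na a\|_{L^6}^2$ to $C\e^3\delta^{5(1-\alpha)}$. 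The Besov statements \eqref{energy in critical space} then follow by interpolation: combining the $L^2$ decay of \eqref{decay of energy 2} with the $H^1$ and $W^{1,6}$ smallness just proven yields $\dB^{1/2}_{2,1}$ and $\dB^{3/4}_{4,1}$ control, while splitting $\int_{2T_2}^\infty\|\na\vv u\|_{L^\infty}dt$ by Littlewood--Paley (low frequencies via $L^2$ decay, high frequencies via the new $H^2$ regularity) gives the stated bound.

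\textbf{Main obstacle.} The delicate point is Stage 1: the arithmetic must ensure that $\bar c_*\int_0^t\|a\|_{L^\infty}ds$ costs only a \emph{single} logarithm of $\delta^{-1}$, so that the exponential collapses to the polynomial $\delta^{-C\bar c_*\alpha}$ rather than a double exponential. Success hinges on using the effective viscous flux $F$, whose parabolic character yields $L^2_tH^1$ control without logarithmic Riesz losses, to separate the genuinely dangerous contribution $\bar c_*\|a\|_{L^\infty}$ from harmless ones, and on the sharp bound \eqref{dynamic for a in L 1 L infty}. The constraint $\alpha\leq 1/(24\bar c_*+7/4)$ is precisely what is needed for the resulting growth $\delta^{-(24\bar c_*+1)\alpha}$ to remain compatible with the decay rates of Theorem \ref{global dynamics thm}, thereby closing the bootstrap and igniting Stage 2.
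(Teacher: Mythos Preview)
Your overall architecture matches the paper's: the same gradient transport equation for $\na a$, the same logarithmic bound on $\|\na\vv u\|_{L^\infty}$ via Littlewood--Paley truncation of the Riesz multiplier acting on $a$, and the same crucial input \eqref{dynamic for a in L 1 L infty} that reduces a potential double exponential to a single one. Two points are worth sharpening. First, the paper's logarithmic inequality (its Lemma before Proposition \ref{propagation prop for na a 1}) puts \emph{only} $\|\na a\|_{L^6}$ inside the logarithm; the $\curl\vv u$ and $F$ contributions to $\|\na\vv u\|_{L^\infty}$ are bounded directly by $\|\r\dot{\vv u}\|_{L^2}^{1/2}\|\r\dot{\vv u}\|_{L^6}^{1/2}$ without any log loss. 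This means the $p=6$ estimate closes autonomously and must be run first; the $p=2$ case then inherits the already-known bound on $\ln(\|\na a\|_{L^6}^2+1)$. Your formulation, with $\|\na^2\vv u\|_{L^2}$ inside the log, would couple the two and is not how the paper proceeds. Second, the constant $24$ is not a tally of invocations: it is simply $8\times 3$, where $8\bar c_*\gamma$ is the coefficient in the differential inequality and $3\gamma^{-1}\al\ln\delta^{-1}$ is the bound from \eqref{dynamic for a in L 1 L infty}; the extra $+1$ in the exponent absorbs the $\ln\delta^{-1}$ coming from $\ln G(0)$.

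The one genuine divergence is your endgame for \eqref{energy in critical space}. You propose to obtain $\int_{2T_2}^\infty\|\na\vv u\|_{L^\infty}\,dt$ by hand via a low/high frequency split. The paper instead shows that at $t=2T_2$ the interpolated Besov norms $\|\vv u\|_{\dB^{1/2}_{2,1}}+\|\varrho\|_{\dB^{1/2}_{2,1}}+\|\varrho\|_{\dB^{3/4}_{4,1}}$ are small (of order $\e^{5/4}\delta^{21/32}$), and then invokes the critical well-posedness theorem of Danchin--He as a black box to produce the $L^1_tL^\infty$ Lipschitz bound on $[2T_2,\infty)$. Your direct route is plausible but would require reproving a nontrivial piece of that critical theory; the paper's shortcut is what actually delivers the third line of \eqref{dynamics of na a in L 6} and the second line of \eqref{high order energy estimate}, since both rely on $\int_{2T_2}^\infty\|\na\vv u\|_{L^\infty}\,dt\lesssim 1$ in the final Gronwall.
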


\begin{rmk} Again our theorem holds also for $\delta\in[2^{-\f{15}{\alpha}},1]$.
The additional restriction that $\al\leq\min\Bigl\{\f{1+\f{1}{3\gamma}}{2+\f{1}{4\gamma}},\ \f{1}{24\bar{c}_*+\f{7}{4}}\Bigr\}$   enables us to use  \eqref{dynamic for a in L infty} and \eqref{decay of energy 1}   to get the uniform-in-time propagation of the high order regularity. If it is removed,  (\ref{high order energy estimate}-\ref{dynamics of na a in L 6}) still hold before the moment $T_2$ but after that the
high order energy will grow exponentially in time.
\end{rmk}

\begin{rmk}
  Estimates  (\ref{high order energy estimate}) and \eqref{dynamics of na a in L 6}  show that  $T_2$ is a crucial moment such that the behavior of the high order energy will behave quite differently before and after.  Roughly speaking, before this moment, the high order   energy will grow exponentially compared to the initial time. This is mainly   induced by the fast collapse of the initial bump region of the density(see \eqref{dynamic for a in L 1 L infty}).  After this moment, \eqref{decay of energy 1} plays the essential role in showing that   the high order energy decays and eventually becomes sufficiently small in critical Besov space(see \eqref{energy in critical space}).
\end{rmk}

\begin{rmk} As a direct consequence of the estimate $\int_{2T_2}^\infty\|\na\vv u(t)\|_{L^\infty}\,dt\leq C\e^{\f54}\delta^{\f{21}{32}}$ in  \eqref{energy in critical space}, any higher order regularities of the solutions to  \eqref{CNS} can be propagated uniformly in time.
\end{rmk}

\subsubsection{Idea of the proof} In this subsection, we shall outline the main idea and the strategy to the proof of our main results.
\smallskip

\underline{\it (i).Proof of Theorem \ref{global existence thm}.}  The main idea of the proof stems from the new observations for effective viscous flux and the application of the interpolation method.
\smallskip

$\bullet$ {\it Effective viscous flux.} Our first observation is to let effective viscous flux $F$ involved directly in the lower order energy by multiplying $\vv u$-equation with $\dot{\vv u}=\p_t\vv u+\vv u\cdot\na\vv u$ (see Proposition \ref{prop for curl u and q}). It has two  advantages. On
   one hand, it helps a lot to control the density and velocity in the nonlinear terms. On the other hand, it enables to have the full dissipation or damping structure for the lower order energy when the density is bounded from below and above. This is crucial to get the decay estimate in Theorem \ref{global dynamics thm}. Our second observation lies in Lemma \ref{lemma for rho} and Lemma \ref{lower bound lemma}. The key point is that  $\|F\|_{L^2_t(L^\infty)}$ can be applied to derive the pointwise control on the density from below and above via Lagrangian coordinate.
\smallskip

$\bullet$ {\it Interpolation method.} As we mentioned before,  the short pulse type data are very sensitive to the Sobolev norms and $L^p$ norms but they are consistent with the interpolation theory. To catch the precise order w.r.t. $\delta$ of the lower order energy, in particular, for $\|\na \vv u\|_{L^3}$ and $\|a\|_{L^3}$, we make full use of the interpolation, for instance, see Lemma \ref{prop for a in L 3}, where we recognize that $H(\rho)$ plays the role as $\|a\|_{L^1}$ from the point view of the order w.r.t. $\delta$. In general we always think that $H(\rho)$ behaves like $\|a\|_{L^2}^2$.
\smallskip

\underline{\it (ii). Proof of Theorem \ref{global dynamics thm}.} The key points rely on the new decay estimates on the density and the time-frequency splitting method to calculate   $\int_{|\xi|\lesssim\langle t\rangle^{-1/2}}\bigl(\gamma|\widehat{{(\rho-1)}}(t,\xi)|^2+|\widehat{\r\vv u}(t,\xi)|^2\bigr)\,d\xi$.

$\bullet$ {\it Estimates on the density.} By using Lagrangian coordinate, one may find that the density enjoys the similar structure as the toy model $\pa_tf+f^2=0$ with $f|_{t=0}=\delta^{-\alpha}$. This shows that at the beginning $f(t)$ decreases very fast. To quantize it for the true model, we introduce the dyadic decomposition on the initial data and show that within a very small time, the true model behaves exactly the same as the toy model. We refer to Proposition \ref{dynamics prop for a over [0,1]} for details. These estimates have two direct applications. The first one  enables us to find that the lower order energy has full dissipation or damping structure after the moment $T_0$ since from this moment the density is bounded from below and above by two universal constants. The second application is to show that the high order energy will have single exponential growth(i.e.,\, $\exp(\delta^{-\th})$  with some $\th>0$, see \eqref{high order energy estimate}) while it is expected to have double exponential growth(i.e.,\, $\exp(\exp(\delta^{-\th}))$  with some $\th>0$, see \eqref{S2eq8}). This will be crucial for us to investigate the uniform-in-time control of the high order energy.

$\bullet$ {\it Calculation of  $\int_{|\xi|\lesssim\langle t\rangle^{-1/2}}\bigl(\gamma|\widehat{{(\rho-1)}}(t,\xi)|^2+|\widehat{\r\vv u}(t,\xi)|^2\bigr)\,d\xi$.} As we mentioned before,  after the moment $T_0$, the lower order energy becomes fully dissipated or damped. But we are still blocked to get the decay estimates  because we have no idea to propagate the condition \eqref{initial ansatz in L 1} till the moment $T_0$. Another difficulty comes from the fact that initially the density is far away from the constant state. To overcome this difficulty, we resort to calculate  $\int_{|\xi|\lesssim\langle t\rangle^{-1/2}}\bigl(\gamma|\widehat{{(\rho-1)}}(t,\xi)|^2+|\widehat{\r\vv u}(t,\xi)|^2\bigr)\,d\xi$ instead of $\int_{|\xi|\lesssim\langle t\rangle^{-1/2}}\bigl(\gamma|\widehat{{(\rho-1)}}(t,\xi)|^2+|\widehat{\vv u}(t,\xi)|^2\bigr)\,d\xi$. The advantage lies in the fact it enables us to avoid using the smallness condition on $\rho-1$.
\smallskip

\underline{\it (iii). Proof of Theorem \ref{propagation regularity thm}.} The main strategy is to put together all the previous results in a proper way. The key point is to obtain the dynamic behavior of $\|\na a\|_{L^6}$.

$\bullet$ We first use \eqref{dynamic for a in L 1 L infty} to prove that $\|\na a(t)\|_{L^6}$  has single exponential growth(i.e.,\, $\exp(\delta^{-\th})$  with some $\th>0$, see \eqref{high order energy estimate}) within $[0,4T_2]$. It implies that  $\ln(1+\|\na a(t)\|_{L^6})$ grows at most linearly depending on $\delta^{-\th}$.

$\bullet$ In the time interval $[2T_2,4T_2]$, due to  \eqref{dynamic for a in L infty},  we conclude that the density is now sufficiently close to the constant state. Under the restriction on $\alpha$, it is possible to get that  $\|a(t)\|_{L^\infty}\ln(1+\|\na a(t)\|_{L^6})$ is bounded from above by some universal constant. From this point together with the logarithmic interpolation inequality for $\|\na\vv u\|_{L^\infty}$(see \eqref{E 74}-\eqref{E 77}), we find that the norm $\|\na a\|_{L^6}$ decays exponentially in $[T_2,4T_2]$ and becomes small at the moment $2T_2$. By the decay estimates of the lower order energy and  standard interpolation inequality, we show that at the moment $2T_2$, $\r-1$ and $\vv u$ are both small in some critical Besov spaces which immediately imply that $\|\na\vv u\|_{L^1([0,\infty);L^\infty)}$ is finite. As a consequence, high order regularities of thus constructed solutions can be propagated uniformly in time.

\medskip

We end this section with some notations that will be used throughout the paper.
\smallskip

\noindent{\bf Notations:}  The notation $C$ denotes a universal positive constant that is independent of $\delta$ but may change from line to line.
 The notation $a\lesssim b$,
 means that there is a universal constant $C>0$, such that $a\leq Cb$. While notaion $a\sim b$ means both $a\lesssim b$ and $b\lesssim a$. We shall denote by~$(a\,|\,b)$
the $L^2(\R^3)$ inner product of $a$ and $b.$  And for any $s\in\R$, $H^s(\R^3)$ denotes the classical  $L^2$ based Sobolev spaces with the norm $\|\cdot\|_{H^s}$.
The notation $\|\cdot\|_{L^p}$ stands for  the $L^p(\R^3)$ norm for $1\leq p \leq \infty$.
 Finally, we
denote $L^p_T(L^q)$ to be the space $L^p([0,T];
L^q(\R^3))$ for $1\leq p,q\leq\infty$.

\setcounter{equation}{0}
\section{Global well-posedness of the system (\ref{CNS})}

In this section, we shall present the {\it a priori} estimates for smooth enough solutions of the system (\ref{CNS}), which ensure
  the global existence of strong solutions to \eqref{CNS}, under the assumptions of Theorem \ref{global existence thm}. As a matter of fact, the existence and uniqueness of strong solutions to the system \eqref{CNS} on
  sufficiently small time interval were proved in \cite{Solo76}.
 In order to close the energy estimates below, we split the proof into the following two parts:
\begin{itemize}
\item The first part focuses on the {\it a priori} estimates for smooth enough solutions of the system (\ref{CNS}). In particular, we investigate the size of the energy norm  of  $F=\div \vv u-a$ (effective viscous   flux) and   $\sqrt{\r}\dot{\vv u}$ (material derivative of the velocity);

\item The second part concentrates on a continuous argument, that is, under the ansatz on the solution itself, we prove that the lifespan of the solutions can actually be extended to be infinity, and then we shall prove the propagation of $\dot{W}^{1,q}(\R^3)$ regularity of the density function and also
    the $\dot{W}^{2,1}_q([0,T]\times\R^3)$ regularity of the velocity field (without the uniform-in-time bounds). So that the local unique strong solution is the global one.
\end{itemize}


\subsection{The {\it a priori} estimates for the system \eqref{CNS}} We begin with the standard energy equality for  (\ref{CNS}).

\begin{prop}\label{prop for basic energy}
{\sl Let $(\r,\vv u)$ be a smooth enough solution of \eqref{CNS} on $[0,T]$. Then for $t\leq T$, one has
\beq\label{basic energy equality}
\f{d}{dt}\Bigl(\f12\|\sqrt\r\vv u(t)\|_{L^2}^2+H(\r(t))\Bigr)+\|\na\vv u\|_{L^2}^2=0.
\eeq}
\end{prop}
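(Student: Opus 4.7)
The plan is to derive the identity as the standard physical energy balance for barotropic compressible flows, multiplying the momentum equation by $\vv u$, the continuity equation by an appropriate function of $\rho$, and combining the two. First I would take the $L^2$ inner product of the momentum equation $\partial_t(\rho\vv u)+\mathrm{div}(\rho\vv u\otimes\vv u)-\Delta\vv u+\nabla\rho^\gamma=\vv 0$ with $\vv u$ and integrate over $\R^3$. Using the continuity equation, the transport part collapses via the algebraic identity
\[
\bigl[\partial_t(\rho\vv u)+\mathrm{div}(\rho\vv u\otimes\vv u)\bigr]\cdot\vv u=\tfrac12\partial_t(\rho|\vv u|^2)+\tfrac12\mathrm{div}(\rho\vv u\,|\vv u|^2),
\]
so that integration yields $\frac{d}{dt}\tfrac12\|\sqrt\rho\vv u\|_{L^2}^2$. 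An integration by parts on $-\int\Delta\vv u\cdot\vv u\,dx$ produces $\|\nabla\vv u\|_{L^2}^2$, while the pressure term becomes $\int\nabla\rho^\gamma\cdot\vv u\,dx=-\int\rho^\gamma\,\mathrm{div}\,\vv u\,dx$ after another integration by parts (justified by the decay $\rho^\gamma-1\to0$ at infinity, which keeps the boundary contribution harmless).

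Next I would identify this pressure contribution with $\frac{d}{dt}H(\rho)$. The key is the pointwise identity $\rho h'(\rho)-h(\rho)=\rho^\gamma-1$, which holds for both branches in the definition of $h$: for $\gamma>1$ one checks $h'(\rho)=\frac{\gamma}{\gamma-1}(\rho^{\gamma-1}-1)$ directly, and for $\gamma=1$ one uses $h'(\rho)=\ln\rho$. Combined with the continuity equation $\partial_t\rho+\vv u\cdot\nabla\rho=-\rho\,\mathrm{div}\,\vv u$, this gives
\[
\partial_t h(\rho)+\mathrm{div}\bigl(h(\rho)\vv u\bigr)=-(\rho h'(\rho)-h(\rho))\,\mathrm{div}\,\vv u=-(\rho^\gamma-1)\,\mathrm{div}\,\vv u.
\]
Integrating in $x$ and using $\int\mathrm{div}(h(\rho)\vv u)\,dx=0$ together with $\int\mathrm{div}\,\vv u\,dx=0$ (valid because $\vv u$ decays at infinity, noting that $h(\rho)\in L^1$ since $h(\rho)\gtrsim(\rho-1)^2$ near $1$ and $h(\rho)\lesssim\rho^\gamma$ for large $\rho$) produces $\frac{d}{dt}H(\rho)=-\int\rho^\gamma\,\mathrm{div}\,\vv u\,dx$, which is exactly the pressure term from the previous step. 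Adding everything up yields \eqref{basic energy equality}.

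The only subtlety — and the place where I would be most careful — is the justification of the various integrations by parts, since $\rho$ tends to $1$ rather than $0$ at spatial infinity. In particular one must verify that $h(\rho)\vv u$, $\rho^\gamma\vv u-\vv u$, and $\rho|\vv u|^2\vv u$ are integrable (or have vanishing boundary flux) for the class of smooth solutions under consideration; this follows from the assumed decay of $\vv u$, of $\nabla\vv u$, and of $\rho-1$ inherited from the hypothesis that $(\rho,\vv u)$ is a smooth solution with $\lim_{|x|\to\infty}\rho(t,x)=1$. Apart from this bookkeeping the calculation is entirely algebraic, so I do not anticipate any serious analytic obstacle beyond tracking the constant state at infinity.
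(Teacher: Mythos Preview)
Your proposal is correct and follows essentially the same route as the paper: multiply the momentum equation by $\vv u$, integrate by parts, and identify the pressure contribution with $\frac{d}{dt}H(\rho)$ via the continuity equation. The only cosmetic difference is that you handle both $\gamma>1$ and $\gamma=1$ at once through the identity $\rho h'(\rho)-h(\rho)=\rho^\gamma-1$, whereas the paper treats the two cases separately by multiplying the transport equation by $\frac{\gamma}{\gamma-1}(\rho^{\gamma-1}-1)$ and by $(1+\ln\rho)$ respectively; the resulting pointwise identity $\partial_t h(\rho)+\mathrm{div}(h(\rho)\vv u)+(\rho^\gamma-1)\,\mathrm{div}\,\vv u=0$ is the same.
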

\begin{proof}
In view of the $\r$ equation of \eqref{CNS}, we write the $\vv u$ equation of \eqref{CNS} as
\beq\label{eq for u}
\r(\p_t\vv u+\vv u\cdot\na\vv u)-\D\vv u+\na\r^\gamma=\vv 0.
\eeq
By taking $L^2$-inner product of \eqref{eq for u} with $\vv u$ leads to
\beq\label{S2eq1}
\f12\int_{\R^3}\r[\p_t(|\vv u|^2)+\vv u\cdot\na(|\vv u|^2)]dx+\|\na\vv u\|_{L^2}^2=-\int_{\R^3}\vv u\cdot\na\r^\gamma\,dx.
\eeq
Yet by using integration by parts and the transport equation of \eqref{CNS}, one has
\beno
\int_{\R^3}\r[\p_t(|\vv u|^2)+\vv u\cdot\na(|\vv u|^2)]\,dx=\f{d}{dt}\|\sqrt\r\vv u\|_{L^2}^2.
\eeno

On the other hand, recalling the definition of $h(\r)$ and $H(\r)$ in \eqref{def of H rho}, for $\ga>1,$ we get, by multiplying $\f\ga{\ga-1}\bigl(\r^{\ga-1}-1\bigr)$ to the transport equation of \eqref{CNS},
\beno\begin{aligned}
\p_t h(\r)+\vv u\cdot\na h(\r)=&-\f\ga{\ga-1}\bigl(\r^\ga-\r\bigr)\div\vv u\\
=&-h(\r)\div\vv u-(\r^\gamma-1)\div \vv u,
\end{aligned}\eeno
which implies
\beno
\p_t h(\r)+\div \left(h(\r)\vv u\right)+(\r^\ga-1)\div \vv u=0.
\eeno
By integrating the above equation over $\R^3,$ we find
\beq \label{S2eq2}
\f{d}{dt}H(\r)+\int_{\R^3}(\r^\ga-1)\div \vv u\,dx=0\quad
\text{i.e.,}\quad\int_{\R^3}\vv u\cdot\na\r^\gamma\,dx=\f{d}{dt}H(\r).
\eeq

When $\ga=1,$ by multiplying $(1+\ln\r)$ to the transport equation of \eqref{CNS}, we write
\beno
\p_t(\r\ln\r)+\vv u\cdot\na (\r\ln\r)+\left(\r\ln\r+\r\right)\div \vv u=0.
\eeno
By subtracting the transport equation of \eqref{CNS} from the above equation, we obtain
\beno
\p_t\left(\r\ln\r-(\r-1)\right)+\vv u\cdot\na \left(\r\ln\r-(\r-1)\right)+(\r\ln\r-(\r-1)) \div \vv u+(\r-1)\div\vv u=0,
\eeno
which ensures \eqref{S2eq2} for $\gamma=1.$

By inserting the above equalities into \eqref{S2eq1}, we obtain \eqref{basic energy equality}. This finishes the proof of  Proposition \ref{prop for basic energy}.
\end{proof}

\subsubsection{Estimates of $\curl\vv u$ and $F$} In view of  \eqref{def of a, q and dot u} and the fact that
\beq \label{S2eq3} -\D\vv u=\curl\curl\vv u-\na\div\vv u, \eeq
 we write \eqref{eq for u} as
\beq\label{E 7}
-\curl\curl\vv u+\na F=\r\dot{\vv u}.
\eeq
Before deriving the energy estimates for $\curl\vv u$ and $F$, we first consider the elliptic estimate for \eqref{E 7}.

\begin{lem}\label{lemma for elliptic system}
{\sl Let $(\r,\vv u)$ be a smooth enough solution of \eqref{CNS} on $[0,T]$. Then for $t\leq T$, one has
\beq\label{elliptic estimates}
\|\na\curl\vv u\|_{L^p}+\|\na F\|_{L^p}\lesssim\|\r\dot{\vv u}\|_{L^p},\quad\text{for any}\,\, p\in(1,\infty).
\eeq}
\end{lem}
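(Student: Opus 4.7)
The plan is to decouple the vector identity \eqref{E 7} by applying $\div$ and $\curl$ separately, reducing the estimate to two Poisson-type problems whose solutions are expressible through Riesz transforms on $\R^3$.

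First, I apply $\div$ to \eqref{E 7}. Since $\div(\curl\curl\vv u)\equiv 0$ and $\div\nabla F=\Delta F$, this yields
\[
\Delta F=\div(\r\dot{\vv u}),
\]
so that $\nabla F=\nabla\Delta^{-1}\div(\r\dot{\vv u})$ can be written as a composition of Riesz transforms acting on the vector field $\r\dot{\vv u}$. By the classical Calder\'on--Zygmund theory, Riesz transforms are bounded on $L^p(\R^3)$ for every $p\in(1,\infty)$, which gives $\|\nabla F\|_{L^p}\lesssim\|\r\dot{\vv u}\|_{L^p}$.

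Next, I apply $\curl$ to \eqref{E 7}. Since $\curl\nabla F=0$ and $\div\curl\vv u=0$, the identity $\curl\curl\omega=\nabla\div\omega-\Delta\omega$ with $\omega=\curl\vv u$ gives $\curl\curl(\curl\vv u)=-\Delta\curl\vv u$. Therefore
\[
\Delta\curl\vv u=\curl(\r\dot{\vv u}),
\]
and $\nabla\curl\vv u=\nabla\Delta^{-1}\curl(\r\dot{\vv u})$ is again a Riesz-type operator, so the same $L^p$ boundedness yields $\|\nabla\curl\vv u\|_{L^p}\lesssim\|\r\dot{\vv u}\|_{L^p}$. Summing the two bounds produces \eqref{elliptic estimates}.

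I do not anticipate any real obstacle: this is the standard elliptic regularity underlying the Helmholtz decomposition of \eqref{E 7}. The only mild subtlety is ensuring that the ``smooth enough'' hypothesis on $(\r,\vv u)$ provides sufficient decay at infinity to justify inverting $\Delta$ and applying Riesz transforms without picking up a harmonic remainder, but under the working assumption $\vv u\in H^2(\R^3)$ together with the ansatzes of Theorem \ref{global existence thm} this is automatic.
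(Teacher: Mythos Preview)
Your proof is correct and follows essentially the same route as the paper: apply $\div$ and $\curl$ to \eqref{E 7} to obtain $\Delta F=\div(\r\dot{\vv u})$ and $\Delta\curl\vv u=\curl(\r\dot{\vv u})$, then invoke the $L^p$ boundedness of the resulting Riesz-type operators. The only difference is that you spell out the Calder\'on--Zygmund justification and the decay caveat, which the paper leaves implicit.
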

\begin{proof}
By applying the operators $\curl$ and  $\div$ to  \eqref{E 7} respectively, one has
\beno
\D\curl\vv u=\curl(\r\dot{\vv u})\quad\text{and}\quad \D F=\div(\r\dot{\vv u}),
\eeno
so that
\beno
\na\curl\vv u=-\na(-\D)^{-1}\curl(\r\dot{\vv u}),   \quad\na F=-\na(-\D)^{-1}\div(\r\dot{\vv u}).
\eeno
which implies \eqref{elliptic estimates}.
\end{proof}

With Lemma \ref{lemma for elliptic system}, we derive  the energy estimates of $\curl\vv u$ and $F$.

\begin{prop}\label{prop for curl u and q}
{\sl Let $(\r,\vv u)$ be a smooth enough solution of \eqref{CNS} on $[0,T]$. Then for $t\leq T$, one has
\beq\label{estimates for curl u and q}\begin{aligned}
&\f{d}{dt}\Bigl(\f12\|(\curl\vv u, F)(t)\|_{L^2}^2-\gamma H(\r)\Bigr)+\|\sqrt\r\dot{\vv u}\|_{L^2}^2-\gamma\|\div\vv u\|_{L^2}^2\\
&\lesssim\|\r\|_{L^\infty}^{\f12}\Bigl(\bigl(\|\curl\vv u\|_{L^2}+\|F\|_{L^2}\bigr)^{\f12}\|\sqrt\r\|_{L^\infty}^{\f12}\|\sqrt\r\dot{\vv u}\|_{L^2}^{\f12}+\|a\|_{L^3}\Bigr)\|\sqrt\r\dot{\vv u}\|_{L^2}\|\na\vv u\|_{L^2}.
\end{aligned}\eeq}
\end{prop}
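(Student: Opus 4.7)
\textbf{Proof Plan for Proposition \ref{prop for curl u and q}.}
The strategy is to test the equation \eqref{E 7}, namely $-\curl\curl\vv u+\na F=\r\dot{\vv u}$, against the material derivative $\dot{\vv u}=\pa_t\vv u+\vv u\cdot\na\vv u$. I would first take the $L^2$ inner product, getting
\[
\int_{\R^3}\r|\dot{\vv u}|^2\,dx=-\int_{\R^3}\curl\curl\vv u\cdot\dot{\vv u}\,dx+\int_{\R^3}\na F\cdot\dot{\vv u}\,dx,
\]
and then split the RHS according to $\dot{\vv u}=\pa_t\vv u+\vv u\cdot\na\vv u$. Integrating by parts and swapping $\curl$ with $\pa_t$ yields $-\int\curl\curl\vv u\cdot\pa_t\vv u\,dx=-\tfrac12\tfrac{d}{dt}\|\curl\vv u\|_{L^2}^2$, and using $\div\vv u=F+a$ (from \eqref{def of a, q and dot u}) gives $\int\na F\cdot\pa_t\vv u\,dx=-\tfrac12\tfrac{d}{dt}\|F\|_{L^2}^2-\int F\,\pa_t a\,dx$. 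The two $\vv u\cdot\na\vv u$ contributions recombine, via \eqref{E 7} itself, into the single remainder $\int\r\dot{\vv u}\cdot(\vv u\cdot\na\vv u)\,dx$.

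The crucial algebraic step is to compute $\pa_t a$ from the continuity equation: multiplying the transport equation by $\gamma\r^{\gamma-1}$ produces $\pa_t a+\vv u\cdot\na a+\gamma(a+1)\div\vv u=0$. Substituting this identity into $-\int F\,\pa_t a\,dx$ generates one term $\gamma\int F\,\div\vv u\,dx$ together with lower order terms $\int F\vv u\cdot\na a\,dx$ and $\gamma\int Fa\div\vv u\,dx$. On the other side, the definition of $H(\r)$ and \eqref{S2eq2} give $-\gamma\tfrac{d}{dt}H(\r)=\gamma\int a\,\div\vv u\,dx$, so that
\[
-\gamma\tfrac{d}{dt}H(\r)-\gamma\|\div\vv u\|_{L^2}^2=\gamma\int\div\vv u\,(a-\div\vv u)\,dx=-\gamma\int F\,\div\vv u\,dx,
\]
using $\div\vv u=F+a$ once more. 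Thus adding $-\gamma\tfrac{d}{dt}H(\r)-\gamma\|\div\vv u\|_{L^2}^2$ to both sides produces the desired LHS of \eqref{estimates for curl u and q} while exactly cancelling the $\gamma\int F\,\div\vv u\,dx$ term, leaving only $\int\r\dot{\vv u}\cdot(\vv u\cdot\na\vv u)\,dx+\int F\vv u\cdot\na a\,dx+\gamma\int Fa\,\div\vv u\,dx$ to estimate.

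For the remainder I would use Hölder together with the Sobolev embedding $\|\vv u\|_{L^6}\lesssim\|\na\vv u\|_{L^2}$ and the elliptic bound \eqref{elliptic estimates} from Lemma \ref{lemma for elliptic system}, which gives $\|\na F\|_{L^2}+\|\na\curl\vv u\|_{L^2}\lesssim\|\r\dot{\vv u}\|_{L^2}\leq\|\sqrt\r\|_{L^\infty}\|\sqrt\r\dot{\vv u}\|_{L^2}$. The two lower-order terms $\int F\vv u\cdot\na a\,dx$ (integrated by parts as $-\int a\vv u\cdot\na F\,dx-\int aF\div\vv u\,dx$) and $\gamma\int Fa\div\vv u\,dx$ are both controlled by the Hölder triple $L^6\times L^6\times L^2$ or $L^3\times L^6\times L^2$, yielding $\|\sqrt\r\|_{L^\infty}\|\sqrt\r\dot{\vv u}\|_{L^2}\|a\|_{L^3}\|\na\vv u\|_{L^2}$, which is the second summand on the RHS of \eqref{estimates for curl u and q}. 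For $\int\r\dot{\vv u}\cdot(\vv u\cdot\na\vv u)\,dx$ I bound by $\|\sqrt\r\|_{L^\infty}\|\sqrt\r\dot{\vv u}\|_{L^2}\|\vv u\|_{L^6}\|\na\vv u\|_{L^3}$, and interpolate $\|\na\vv u\|_{L^3}\lesssim\|F\|_{L^3}+\|\curl\vv u\|_{L^3}+\|a\|_{L^3}$ via Hodge decomposition (or $\na\vv u=\na^2\Delta^{-1}(F+a)-\na\curl\Delta^{-1}\curl\vv u$ and Calder\'on--Zygmund), then apply $L^2$-$L^6$ interpolation together with \eqref{elliptic estimates} to get $\|F\|_{L^3}+\|\curl\vv u\|_{L^3}\lesssim(\|F\|_{L^2}+\|\curl\vv u\|_{L^2})^{1/2}\|\sqrt\r\|_{L^\infty}^{1/2}\|\sqrt\r\dot{\vv u}\|_{L^2}^{1/2}$. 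This matches the first summand on the RHS of \eqref{estimates for curl u and q}.

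The main obstacles are twofold. First, one must be careful with the algebraic bookkeeping so that the $\gamma\int F\div\vv u\,dx$ produced by the $\pa_t a$ identity exactly cancels against the one coming from $-\gamma\tfrac{d}{dt}H(\r)-\gamma\|\div\vv u\|_{L^2}^2$; this is precisely where the choice of the functional $\tfrac12\|(\curl\vv u,F)\|_{L^2}^2-\gamma H(\r)$ together with the damping term $-\gamma\|\div\vv u\|_{L^2}^2$ is forced. Second, the Hölder/interpolation exponents need to be tuned so that every $\r$-weighted factor appears with the right half-power, which is what makes $\|\sqrt\r\|_{L^\infty}^{3/2}$ and $\|\sqrt\r\dot{\vv u}\|_{L^2}^{3/2}$ appear in the first summand of the RHS; this forces the split $\r=\sqrt\r\cdot\sqrt\r$ inside $\|\r\dot{\vv u}\|_{L^2}$ and the $L^3$ interpolation on $\na\vv u$ rather than a direct $L^6$ bound.
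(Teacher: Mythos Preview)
Your proposal is correct and follows essentially the same approach as the paper: test \eqref{E 7} against $\dot{\vv u}$, use the $a$-equation \eqref{E 11} and the identity \eqref{S2eq2} to arrive at the energy identity (the paper's \eqref{E 16}), and then estimate the remainder via \eqref{elliptic estimates}, the decomposition \eqref{E 18}, and $L^2$--$L^6$ interpolation. Your algebra is in fact slightly more streamlined than the paper's: you keep $F$ together from the outset (producing $\tfrac12\tfrac{d}{dt}\|F\|_{L^2}^2$ directly and leaving only $-\int F\,\pa_t a$), and you recombine the two convective terms into $\int\r\dot{\vv u}\cdot(\vv u\cdot\na\vv u)$ using \eqref{E 7} itself, whereas the paper first passes through $\tfrac12\tfrac{d}{dt}(\|\curl\vv u\|_{L^2}^2+\|\div\vv u\|_{L^2}^2)$ and the auxiliary identity $(2\gamma-1)(\vv u\cdot\na a\mid a)=\tfrac{d}{dt}(\tfrac12\|a\|_{L^2}^2-\gamma H(\r))$ before recombining into $\|F\|_{L^2}^2$.
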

\begin{proof}
Noticing that $\dot{\vv u}=\p_t\vv u+\vv u\cdot\na\vv u$ and
\beno
\curl\curl\vv u
-\na F=\curl\curl\vv u
-\na\div\vv u+\na a,
\eeno
we get, by taking $L^2$-inner product of \eqref{E 7} with $\dot{\vv u},$ that
\beq\label{E 9a}\begin{aligned}
&\f12\f{d}{dt}\Bigl(\|\curl\vv u\|_{L^2}^2+\|\div\vv u\|_{L^2}^2\Bigr)+\|\sqrt\r\dot{\vv u}\|_{L^2}^2\\
&\qquad=-\bigl(\curl\curl\vv u\,|\,\vv u\cdot\na\vv u\bigr)+\bigl(\na F\,|\,\vv u\cdot\na\vv u\bigr)-\bigl(\na a\,|\,\p_t\vv u\bigr).
\end{aligned}\eeq

\no{\Large $\bullet$} {\bf The estimate of $\bigl(\na a\,|\,\p_t\vv u\bigr)$.} We first observe that
\beq\label{E 10}
\bigl(\na a\,|\,\p_t\vv u\bigr)=-\bigl(a\,|\,\p_t\div\vv u\bigr)
=-\f{d}{dt}\bigl(a\,|\,\div\vv u\bigr)+\bigl(\p_ta\,|\,\div\vv u\bigr).
\eeq
Note that $a=\r^\gamma-1$, we deduce from the transport equation of \eqref{CNS} that
\beq\label{E 11}
\p_ta+\vv u\cdot\na a+\gamma(a+1)\div\vv u=0.
\eeq
By taking $L^2$-inner product of \eqref{E 11} with $\div\vv u,$ we write
\beno\begin{aligned}
\bigl(\p_ta\,|\,\div\vv u\bigr)&=-\gamma\|\div\vv u\|_{L^2}^2
-\gamma\bigl(a\,\div\vv u\,|\,\div\vv u\bigr)-\bigl(\vv u\cdot\na a\,|\,\div\vv u\bigr)\\
&=-\gamma\|\div\vv u\|_{L^2}^2
-\gamma\bigl(a\,\div\vv u\,|\,F\bigr)-\bigl(\vv u\cdot\na a\,|\,F\bigr)
-\bigl[\gamma\bigl(\div\vv u\,|\,a^2\bigr)+\bigl(\vv u\cdot\na a\,|\,a\bigr)\bigr],
\end{aligned}\eeno
which implies
\beq\label{E 12}
\bigl(\p_ta\,|\,\div\vv u\bigr)=-\gamma\|\div\vv u\|_{L^2}^2
-(\gamma-1)\bigl(a\,\div\vv u\,|\,F\bigr)+\bigl(\vv u\cdot\na F\,|\,a\bigr)
+(2\gamma-1)\bigl(\vv u\cdot\na a\,|\,a\bigr).
\eeq
Yet it follows from  \eqref{E 11} that
\beno\begin{aligned}
\bigl(\vv u\cdot\na a\,|\,a\bigr)&=-\bigl(\p_t a\,|\,a\bigr)-\gamma\bigl((a+1)\div\vv u\,|\,a\bigr)\\
&=-\f12\f{d}{dt}\|a\|_{L^2}^2+2\gamma\bigl(\vv u\cdot\na a\,|\,a\bigr)+\gamma\int_{\R^3}\vv u\cdot\na a\, dx,
\end{aligned}\eeno
from which together with  \eqref{S2eq2},
we deduce that
\beq\label{E 13}
(2\gamma-1)\bigl(\vv u\cdot\na a\,|\,a\bigr)=\f{d}{dt}\Bigl(\f12\|a\|_{L^2}^2-\gamma H(\r)\Bigr).
\eeq

By inserting the estimates \eqref{E 12} and \eqref{E 13} into  \eqref{E 10}, we obtain
\beq\label{E 14}\begin{aligned}
\bigl(\na a\,|\,\p_t\vv u\bigr)=&\f{d}{dt}\Bigl(\f{1}{2}\|a\|_{L^2}^2
-\bigl(a\,|\,\div\vv u\bigr)-\gamma H(\r)\Bigr)\\
&\quad
-\gamma\|\div\vv u\|_{L^2}^2
-(\gamma-1)\bigl(a\,\div\vv u\,|\,F\bigr)+\bigl(\vv u\cdot\na F\,|\,a\bigr).
\end{aligned}\eeq

Notice that
\begin{align*}
\f12\|\div\vv u\|_{L^2}^2+\f{1}{2}\|a\|_{L^2}^2
-\bigl(a\,|\,\div\vv u\bigr)&=\f12\|\div\vv u-a\|_{L^2}^2=\f12\|F\|_{L^2}^2.
\end{align*}
By substituting \eqref{E 14} into \eqref{E 9a},
 we get
\beq\label{E 16}\begin{aligned}
&\f{d}{dt}\Bigl(\f12\|\curl\vv u\|_{L^2}^2+\f12\|F\|_{L^2}^2
-\gamma H(\r)\Bigr)+\|\sqrt\r\dot{\vv u}\|_{L^2}^2
-\gamma\|\div\vv u\|_{L^2}^2\\
&\quad=-\bigl(\curl\curl\vv u\,|\,\vv u\cdot\na\vv u\bigr)+\bigl(\na F\,|\,\vv u\cdot\na\vv u\bigr)
+(\gamma-1)\bigl(a\,\div\vv u\,|\,F\bigr)-\bigl(\vv u\cdot\na F\,|\,a\bigr).
\end{aligned}\eeq

\no{\Large $\bullet$}  {\bf The estimate of the r.h.s of \eqref{E 16}.}
We first observe that
\begin{align*}
&\bigl|\bigl(\curl\curl\vv u\,|\,\vv u\cdot\na\vv u\bigr)\bigr|+\bigl|\bigl(\na F\,|\,\vv u\cdot\na\vv u\bigr)\bigr|\\
&\leq \bigl(\|\curl\curl\vv u\|_{L^2}
+\|\na F\|_{L^2}\bigr)\|\vv u\|_{L^6}\|\na\vv u\|_{L^3}\\
&
\lesssim \bigl(\|\na\curl\vv u\|_{L^2}+\|\na F\|_{L^2}\bigr)\|\na\vv u\|_{L^2}
\|\na\vv u\|_{L^3}.
\end{align*}
Due to facts that
$
-\D\vv u=\curl\curl\vv u-\na\div\vv u
$
and $F=\div\vv u-a$,
one has
\beno
\vv u=(-\D)^{-1}\curl\curl\vv u-\na(-\D)^{-1} F-\na(-\D)^{-1} a,
\eeno
which implies
\beq\label{E 18}
\|\na\vv u\|_{L^p}
\lesssim\|\curl\vv u\|_{L^p}+\|F\|_{L^p}+\|a\|_{L^p},\quad\text{for any}\
p\in(1,\infty).
\eeq
Therefore, we obtain
\beq\label{E 19}\begin{aligned}
&\bigl|\bigl(\curl\curl\vv u\,|\,\vv u\cdot\na\vv u\bigr)\bigr|
+\bigl|\bigl(\na F\,|\,\vv u\cdot\na\vv u\bigr)\bigr|\\
&
\leq\bigl(\|\na\curl\vv u\|_{L^2}+\|\na F\|_{L^2}\bigr)\|\na\vv u\|_{L^2}\bigl(\|\curl\vv u\|_{L^3}+\|F\|_{L^3}+\|a\|_{L^3}\bigr).
\end{aligned}\eeq

For the last two terms in the r.h.s. of \eqref{E 16}, we have
\beno\begin{aligned}
&(\gamma-1)\bigl|\bigl(a\,\div\vv u\,|\,F\bigr)\bigr|+\bigl|\bigl(\vv u\cdot\na F\,|\,a\bigr)\bigr|\lesssim\gamma\|a\|_{L^3}\|\na\vv u\|_{L^2}\|\na F\|_{L^2},
\end{aligned}\eeno
which along with \eqref{E 19} implies the r.h.s. of \eqref{E 16} is bounded by
\beq\label{E 20}\begin{aligned}
\lesssim&\bigl(\|\curl\vv u\|_{L^3}+\|F\|_{L^3}+\|a\|_{L^3}\bigr)\Bigl(\|\na\curl\vv u\|_{L^2}+\|\na F\|_{L^2}\Bigr)\|\na\vv u\|_{L^2}\\
\lesssim&\Bigl(\|\curl\vv u\|_{L^2}^{\f12}\|\na\curl\vv u\|_{L^2}^{\f12}+\|F\|_{L^2}^{\f12}\|\na F\|_{L^2}^{\f12}\Bigr)\Bigl(\|\na\curl\vv u\|_{L^2}+\|\na F\|_{L^2}\Bigr)\|\na\vv u\|_{L^2}\\
&\quad
+\|a\|_{L^3}\Bigl(\|\na\curl\vv u\|_{L^2}+\|\na F\|_{L^2}\Bigr)\|\na\vv u\|_{L^2},
\end{aligned}\eeq
where we used the interpolation inequality, $\|f\|_{L^3}\lesssim\|f\|_{L^2}^{\f12}\|f\|_{\dot{H}^1}^{\f12},$ in the last inequality.

Thanks to \eqref{elliptic estimates}, we get, by inserting  \eqref{E 20} into  \eqref{E 16}, that
\beno\begin{aligned}
&\f{d}{dt}\Bigl(\f12\|(\curl\vv u, F)(t)\|_{L^2}^2
-\gamma H(\r)\Bigr)+\|\sqrt\r\dot{\vv u}\|_{L^2}^2
-\gamma\|\div\vv u\|_{L^2}^2\\
&\lesssim\bigl(\|\curl\vv u\|_{L^2}+\|F\|_{L^2}\bigr)^{\f12}\|\r\dot{\vv u}\|_{L^2}^{\f32}\|\na\vv u\|_{L^2}
+\|a\|_{L^3}\|\r\dot{\vv u}\|_{L^2}\|\na\vv u\|_{L^2},
\end{aligned}\eeno
 from which and the fact that $\|\r\dot{\vv u}\|_{L^2}\leq\|\sqrt\r\|_{L^\infty}\|\sqrt\r\dot{\vv u}\|_{L^2}$, we obtain \eqref{estimates for curl u and q}. This completes the proof of Proposition \ref{prop for curl u and q}.
\end{proof}

\subsubsection{Estimate  of $\sqrt\r\dot{\vv u}$} In this subsection, we derive the energy estimate for $\sqrt\r\dot{\vv u}$.

\begin{prop}\label{prop for higher order energy estimates}
Let $(\r,\vv u)$ be a smooth enough solution of \eqref{CNS} on $[0,T]$. Then for $t\leq T$, one has
\beq\label{estimates for sqrt rho dot u}\begin{aligned}
&\f12\f{d}{dt}\Bigl(\|\sqrt\r\dot{\vv u}\|_{L^2}^2+\gamma\|\div\vv u\|_{L^2}^2\Bigr)+\|\na\dot{\vv u}\|_{L^2}^2\\
&\lesssim \bigl(\bigl(\|\curl\vv u\|_{L^2}+\|F\|_{L^2}\bigr)^{\f12}\|\sqrt\r\|_{L^\infty}^{\f12}\|\sqrt\r\dot{\vv u}\|_{L^2}^{\f12}+\|a\|_{L^3}\bigr)\\
&\qquad\times\bigl(\|\sqrt\r\|_{L^\infty}\|\sqrt\r\dot{\vv u}\|_{L^2}+\|a\|_{L^6}\bigr)\bigl(\|\na\vv u\|_{L^2}+\|\na\dot{\vv u}\|_{L^2}\bigr).
\end{aligned}\eeq
\end{prop}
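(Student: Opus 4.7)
The strategy is to apply the material derivative $D_t \eqdefa \p_t + \vv u\cdot\na$ to the momentum equation written as $\r\dot{\vv u}=\D\vv u-\na a$, then take the $L^2$ inner product against $\dot{\vv u}$. Using the continuity equation twice (once to rewrite $D_t(\r f) = \r D_t f - \r f\div\vv u$, and once to verify $\int\r\dot{\vv u}\cdot D_t\dot{\vv u}\,dx=\f12\f{d}{dt}\|\sqrt\r\dot{\vv u}\|_{L^2}^2$), I would obtain the master identity
\beno
\f12\f{d}{dt}\|\sqrt\r\dot{\vv u}\|_{L^2}^2=\int\r|\dot{\vv u}|^2\div\vv u\,dx+\int\dot u^j D_t\D u^j\,dx-\int\dot u^j D_t\pa_j a\,dx.
\eeno

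For the Laplacian commutator, I would expand $D_t\D u^j=\D\dot u^j-\D(u^k\pa_k u^j)+\vv u\cdot\na\D u^j$ and integrate by parts; the main contribution is the dissipation $-\|\na\dot{\vv u}\|_{L^2}^2$, while the remainder takes the schematic form $\int\na\vv u\,\na\vv u\,\na\dot{\vv u}\,dx$ after redistributing derivatives. For the pressure commutator, I would use $D_t\pa_j a=\pa_j D_t a-\pa_j u^k\pa_k a$ together with the relation $D_t a=-\gamma(a+1)\div\vv u$ obtained from \eqref{E 11}. Splitting $a+1=1+a$ and expanding $\div\dot{\vv u}=\pa_t\div\vv u+\pa_j(u^k\pa_k u^j)$, the ``$1$''-piece of $-\gamma\int\div\dot{\vv u}\cdot(a+1)\div\vv u\,dx$ yields exactly $-\f{\gamma}{2}\f{d}{dt}\|\div\vv u\|_{L^2}^2$, which transfers to the left-hand side to produce the damping structure stated in \eqref{estimates for sqrt rho dot u}, plus a cubic remainder of the form $\int\na\vv u\,\na\vv u\,\na\vv u\,dx$. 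The ``$a$''-piece and the commutator remainder $\int\dot u^j\pa_j u^k\pa_k a\,dx$ reorganize, after integration by parts, into products of the type $a\cdot\na\vv u\cdot\na\dot{\vv u}$.

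Every remainder above should fit the three-factor template on the right of \eqref{estimates for sqrt rho dot u}, namely an $L^3$-factor carrying one derivative of $\vv u$, an $L^6$-factor, and an $L^2$-factor. The $L^3$-factor is controlled via \eqref{E 18}, i.e.\ $\|\na\vv u\|_{L^3}\lesssim\|\curl\vv u\|_{L^3}+\|F\|_{L^3}+\|a\|_{L^3}$, followed by the interpolation $\|f\|_{L^3}\lesssim\|f\|_{L^2}^{\f12}\|\na f\|_{L^2}^{\f12}$ and the elliptic bound \eqref{elliptic estimates}, which turns $\|\na\curl\vv u\|_{L^2}+\|\na F\|_{L^2}$ into $\|\r\dot{\vv u}\|_{L^2}\leq\|\sqrt\r\|_{L^\infty}\|\sqrt\r\dot{\vv u}\|_{L^2}$. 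The $L^6$-factor arises either from the Sobolev-type bound $\|\na\vv u\|_{L^6}\lesssim\|\na\curl\vv u\|_{L^2}+\|\na F\|_{L^2}+\|a\|_{L^6}\lesssim\|\sqrt\r\|_{L^\infty}\|\sqrt\r\dot{\vv u}\|_{L^2}+\|a\|_{L^6}$, or directly from $\|a\|_{L^6}$; this is precisely the middle factor appearing in \eqref{estimates for sqrt rho dot u}. Finally, the $L^2$-factor is $\|\na\vv u\|_{L^2}$ for purely $\vv u$-cubic remainders and $\|\na\dot{\vv u}\|_{L^2}$ (also used via $\|\dot{\vv u}\|_{L^6}\lesssim\|\na\dot{\vv u}\|_{L^2}$) for remainders containing $\dot{\vv u}$ at top order. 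Assembling these bounds reproduces \eqref{estimates for sqrt rho dot u}.

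The main obstacle is the pressure commutator: one must verify that the quantity $\gamma\|\div\vv u\|_{L^2}^2$ emerges with the correct sign so that it can be genuinely absorbed into the time derivative on the left-hand side, rather than polluting the right-hand side. This relies crucially on the transport identity $D_t a=-\gamma(a+1)\div\vv u$ and on the splitting $a+1=1+a$. A secondary technical difficulty is bookkeeping every remainder into the specific three-factor shape stated in the proposition, which requires a careful combination of \eqref{E 18}, the interpolation $\|\cdot\|_{L^3}\lesssim\|\cdot\|_{L^2}^{1/2}\|\cdot\|_{\dH^1}^{1/2}$, and the elliptic estimate \eqref{elliptic estimates}; the fact that no $\|\na a\|_{L^2}$ ever appears in \eqref{estimates for sqrt rho dot u} is precisely the reason to pass through $\|a\|_{L^6}$ rather than through $\|\na a\|_{L^2}$ when extracting the $L^6$-factor.
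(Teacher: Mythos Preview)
Your strategy is correct and matches the paper's in spirit: apply $D_t$ to the momentum equation, pair with $\dot{\vv u}$, extract the damping $\tfrac{\gamma}{2}\tfrac{d}{dt}\|\div\vv u\|_{L^2}^2$ from $D_t a=-\gamma(a+1)\div\vv u$, and bound all cubic remainders via \eqref{E 18}, interpolation, and \eqref{elliptic estimates}. The paper, however, organizes the computation around the decomposition $-\curl\curl\vv u+\na F=\r\dot{\vv u}$ rather than $\D\vv u-\na a=\r\dot{\vv u}$. This buys two simplifications you do not get: (i) the term $\int\r|\dot{\vv u}|^2\div\vv u\,dx$ in your master identity cancels identically in the paper's derivation (see \eqref{E 21}--\eqref{E 22}), whereas in your scheme it survives and must be estimated separately (it does fit the template, via $\|\div\vv u\|_{L^3}\cdot\|\sqrt\r\|_{L^\infty}\|\sqrt\r\dot{\vv u}\|_{L^2}\cdot\|\na\dot{\vv u}\|_{L^2}$, but you do not mention it); and (ii) the paper's commutator terms $I,II,III$ involve only $\na\vv u$ and $\na F$, never $\na a$, so they are immediately of the form $\na\vv u\cdot\na\vv u\cdot\na\dot{\vv u}$ or $F\cdot\na\vv u\cdot\na\dot{\vv u}$. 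Your commutator $\int\dot u^j\pa_j u^k\pa_k a\,dx$ carries an explicit $\na a$, and ``reorganizing after integration by parts into products of the type $a\cdot\na\vv u\cdot\na\dot{\vv u}$'' hides the substitution $\pa_k a=\pa_k\div\vv u-\pa_k F$ (or an equivalent manipulation) needed to eliminate it without producing $\|\na a\|_{L^2}$. Both routes close; the paper's $F$-based organization is what makes the bookkeeping clean.
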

\begin{proof} Let us start with the derivation of the evolution equation of $\dot{\vv u}.$

\no{\Large $\bullet$} {\bf  Derivation of the equation for $\dot{\vv u}.$}

We first get, by applying $D_t\eqdefa \p_t+\vv u\cdot\na$ to $\r\dot{\vv u}$ and using the transport  equation of \eqref{CNS}, that
\beq\label{E 21}
D_t(\r\dot{\vv u})=\r D_t\dot{\vv u}+D_t\r\dot{\vv u}=\r D_t\dot{\vv u}-\r\dot{\vv u}\,\div\vv u.
\eeq
While in view of \eqref{E 7}, we write
\beq\label{E 22}\begin{aligned}
&\vv u\cdot\na\bigl(\curl\curl\vv u-\na F\bigr)\\
&=\div\bigl(\curl\curl\vv u\otimes\vv u\bigr)-\div\bigl(\na F\otimes\vv u\bigr)
-\bigl(\curl\curl\vv u-\na F\bigr)\div\vv u\\
&=-\div\bigl(\D\vv u\otimes\vv u\bigr)+\div\bigl(\na\div\vv u\otimes\vv u\bigr)-\div\bigl(\na F\otimes\vv u\bigr)+\r\dot{\vv u}\div\vv u.
\end{aligned}\eeq
Then by applying $D_t$ to \eqref{E 7} and using
 \eqref{E 21} and \eqref{E 22},  we obtain
\beq\label{eq for dot u}\begin{aligned}
\r D_t\dot{\vv u}-\D\dot{\vv u}+\na D_ta=&\bigl[\div\bigl(\D\vv u\otimes\vv u\bigr)-\D(\vv u\cdot\na\vv u)\bigr]\\
&+\bigl[\na(\vv u\cdot\na\div\vv u)-\div\bigl(\na\div\vv u\otimes\vv u\bigr)\bigr]\\
&+\bigl[\div\bigl(\na F\otimes\vv u\bigr)-\na(\vv u\cdot\na F)\bigr],
\end{aligned}\eeq
where we used the fact that $F=\div\vv u-a$.

\no{\Large $\bullet$} {\bf Energy estimate of  $\dot{\vv u}$.}

By taking $L^2$-inner product of \eqref{eq for dot u} with $\dot{\vv u},$ we find
\beq\label{E 23}\begin{aligned}
&\f12\f{d}{dt}\|\sqrt\r\dot{\vv u}\|_{L^2}^2+\|\na\dot{\vv u}\|_{L^2}^2=
\bigl(D_ta\,|\,\div\dot{\vv u}\bigr)+I+II+III,
\end{aligned}\eeq
where
\beq\label{E 24}\begin{aligned}
I&\eqdefa\bigl(\div\bigl(\D\vv u\otimes\vv u\bigr)-\D(\vv u\cdot\na\vv u)\,|\,\dot{\vv u}\bigr),\\
II&\eqdefa\bigl(\na(\vv u\cdot\na\div\vv u)-\div\bigl(\na\div\vv u\otimes\vv u\bigr)\,|\,\dot{\vv u}\bigr),\\
III&\eqdefa\bigl(\div\bigl(\na F\otimes\vv u\bigr)-\na(\vv u\cdot\na F)\,|\,\dot{\vv u}\bigr).
\end{aligned}\eeq

For the first term on the r.h.s. of \eqref{E 23}, we get, by using \eqref{E 11}, that
\beno\begin{aligned}
\bigl(D_ta\,|\,\div\dot{\vv u}\bigr)&=-\gamma\bigl(\div\vv u\,|\,\div\dot{\vv u}\bigr)
-\gamma\bigl(a\div\vv u\,|\,\div\dot{\vv u}\bigr)\\
&
=-\f{\gamma}{2}\f{d}{dt}\|\div\vv u\|_{L^2}^2-\gamma\bigl(\div\vv u\,|\,\div(\vv u\cdot\na\vv u)\bigr)-\gamma\bigl(a\div\vv u\,|\,\div\dot{\vv u}\bigr),
\end{aligned}\eeno
from which and \eqref{E 23}, we deduce that
\beq\label{E 25}\begin{aligned}
&\f12\f{d}{dt}\Bigl(\|\sqrt\r\dot{\vv u}\|_{L^2}^2+\gamma\|\div\vv u\|_{L^2}^2\Bigr)+\|\na\dot{\vv u}\|_{L^2}^2=I+II+III+IV,
\end{aligned}\eeq
where
\beno
IV\eqdefa-\gamma\bigl(\div\vv u\,|\,\div(\vv u\cdot\na\vv u)\bigr)-\gamma\bigl(a\div\vv u\,|\,\div\dot{\vv u}\bigr).
\eeno

 \no{\Large $\bullet$} {\bf The estimates of $I$ to $IV.$}

In view of \eqref{E 24}, by using integration by parts, we write
\beno\begin{aligned}
I&=-\bigl(\D\vv u\,|\,\vv u\cdot\na\dot{\vv u}\bigr)+\bigl(\na(\vv u\cdot\na\vv u)\,|\,\na\dot{\vv u}\bigr)\\
&=-\bigl(\div\vv u\na\vv u\,|\,\na\dot{\vv u}\bigr)+\sum_{j=1}^3\bigl[\bigl(\p_j\vv u\,|\,\p_j\vv u\cdot\na\dot{\vv u}\bigr)+\bigl(\p_j\vv u\cdot\na\vv u\,|\,\p_j\dot{\vv u}\bigr)\bigr],
\end{aligned}\eeno
\beno\begin{aligned}
II&=-\bigl(\vv u\cdot\na\div\vv u\,|\,\div\dot{\vv u}\bigr)
+\bigl(\na\div\vv u\,|\,\vv u\cdot\na\dot{\vv u}\bigr)\\
&=\bigl(|\div\vv u|^2\,|\,\div\dot{\vv u}\bigr)-\sum_{j=1}^3\bigl(\div\vv u\,|\,\p_j\vv u\cdot\na\dot{u}^j\bigr),
\end{aligned}\eeno
\beno\begin{aligned}
III&=-\bigl(\na F\,|\,\vv u\cdot\na\dot{\vv u}\bigr)+\bigl(\vv u\cdot\na F\,|\,\div\dot{\vv u}\bigr)=\bigl(F\,|\,\sum_{j=1}^3\p_j\vv u\cdot\na\dot{\vv u}-\div\vv u\div\dot{\vv u}\bigr),
\end{aligned}\eeno
and
\beno\begin{aligned}
\bigl(\div\vv u\,|\,\div(\vv u\cdot\na\vv u)\bigr)&
=\bigl(\div\vv u\,|\,\vv u\cdot\na\div\vv u)\bigr)+\sum_{j=1}^3\bigl(\div\vv u\,|\,\p_j\vv u\cdot\na u^j)\bigr)\\
&
=-\f12\bigl(|\div\vv u|^2\,|\,\div\vv u\bigr)+\sum_{j=1}^3\bigl(\div\vv u\,|\,\p_j\vv u\cdot\na u^j\bigr),
\end{aligned}\eeno
from which, we infer
\beq\label{E 27}
\begin{split}
&|I|+|II|\lesssim\|\na\vv u\|_{L^3}\|\na\vv u\|_{L^6}\|\na\dot{\vv u}\|_{L^2},\\
&|III|\lesssim\|F\|_{L^6}\|\na\vv u\|_{L^3}\|\na\dot{\vv u}\|_{L^2}
\lesssim\|\na F\|_{L^2}\|\na\vv u\|_{L^3}\|\na\dot{\vv u}\|_{L^2},\\
&|IV|\lesssim\|\div\vv u\|_{L^3}\|\na\vv u\|_{L^6}\|\na\vv u\|_{L^2}+\|a\|_{L^6}\|\div\vv u\|_{L^3}\|\div\dot{\vv u}\|_{L^2}.
\end{split}
\eeq

 By substituting the estimates \eqref{E 27} into  \eqref{E 25}, we achieve
\beq\label{E 31}\begin{aligned}
&\f12\f{d}{dt}\Bigl(\|\sqrt\r\dot{\vv u}\|_{L^2}^2+\gamma\|\div\vv u\|_{L^2}^2\Bigr)+\|\na\dot{\vv u}\|_{L^2}^2\\
&\lesssim\|\div\vv u\|_{L^3}\|\na\vv u\|_{L^6}\|\na\vv u\|_{L^2}
+\|\na\vv u\|_{L^3}\Bigl(\|\na\vv u\|_{L^6}+\|\na F\|_{L^2}+\|a\|_{L^6}\Bigr)\|\na\dot{\vv u}\|_{L^2}.
\end{aligned}\eeq
Yet it follows from  \eqref{E 18} and \eqref{elliptic estimates} that
\beno\begin{aligned}
\|\na\vv u\|_{L^3}&\lesssim\|\curl\vv u\|_{L^3}+\|F\|_{L^3}+\|a\|_{L^3}\\
&\lesssim\bigl(\|\curl\vv u\|_{L^2}+\|F\|_{L^2}\bigr)^{\f12}\|\r\dot{\vv u}\|_{L^2}^{\f12}+\|a\|_{L^3},\\
\|\na\vv u\|_{L^6}&\lesssim\|\na\curl\vv u\|_{L^2}+\|\na F\|_{L^2}+\|a\|_{L^6}\lesssim\|\r\dot{\vv u}\|_{L^2}+\|a\|_{L^6}.
\end{aligned}\eeno
By inserting the above estimates into \eqref{E 31}, we arrive at
 \eqref{estimates for sqrt rho dot u}. This completes the proof of  Proposition \ref{prop for higher order energy estimates}.
\end{proof}

\subsubsection{Estimate of $a$} In this subsection, we shall derive the estimates of $\|a\|_{L^6}$.

\begin{prop}\label{prop for a in L 6}
{\sl Let $(\r,\vv u)$ be a smooth enough solution of \eqref{CNS} on $[0,T]$. Then for $t\leq T$, one has
\beq\label{estimates for a in L 6}
6\f{d}{dt}\|a(t)\|_{L^6}^2+\|a\|_{L^6}^2
\lesssim
(1+\|a\|_{L^\infty}^2)\|\r\|_{L^\infty}\|\sqrt\r\dot{\vv u}\|_{L^2}^2.
\eeq}
\end{prop}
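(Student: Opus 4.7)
The idea is to test the transport equation for $a=\r^\ga-1$ against $|a|^4a$, use the identity $\div\vv u=F+a$ to expose the effective viscous flux, and exploit the positivity bound $a\ge-1$ (which follows trivially from $\r\ge0$) to tame the one genuinely sign-indefinite term that appears.

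First, starting from the equation $\p_ta+\vv u\cdot\na a+\ga(a+1)\div\vv u=0$ already derived in \eqref{E 11}, I would substitute $\div\vv u=F+a$ into the production term to rewrite it as $\p_ta+\vv u\cdot\na a+\ga a(a+1)=-\ga(a+1)F$. Testing against $|a|^4a$ and integrating, using $\int(\vv u\cdot\na a)|a|^4a\,dx=-\f16\int(\div\vv u)|a|^6\,dx$ together with another insertion of $\div\vv u=F+a$, I would obtain
\beno
\f16\f{d}{dt}\|a\|_{L^6}^6+\ga\|a\|_{L^6}^6+\bigl(\ga-\f16\bigr)\int a|a|^6\,dx
=-\bigl(\ga-\f16\bigr)\int F|a|^6\,dx-\ga\int Fa|a|^4\,dx.
\eeno

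The crucial step is handling the sign-indefinite term $\int a|a|^6\,dx$. Since $\r\ge0$ forces $a\ge-1$, one has $|a|\le1$ pointwise on $\{a<0\}$, hence $|a|^7\le|a|^6$ there and $\int a|a|^6\,dx\ge-\|a\|_{L^6}^6$. Combining with the damping $\ga\|a\|_{L^6}^6$ yields the clean lower bound $\ga\|a\|_{L^6}^6+(\ga-\f16)\int a|a|^6\,dx\ge\f16\|a\|_{L^6}^6$ uniformly for $\ga\ge1$. For the two flux integrals on the right, three-factor H\"older gives $\bigl|\int F|a|^6\,dx\bigr|\le\|F\|_{L^6}\|a\|_{L^\infty}\|a\|_{L^6}^5$ and $\bigl|\int Fa|a|^4\,dx\bigr|\le\|F\|_{L^6}\|a\|_{L^6}^5$, so altogether $\f16\f{d}{dt}\|a\|_{L^6}^6+\f16\|a\|_{L^6}^6\lesssim(1+\|a\|_{L^\infty})\|F\|_{L^6}\|a\|_{L^6}^5$.

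To convert this into the stated form in $\|a\|_{L^6}^2$, I would use $\f{d}{dt}\|a\|_{L^6}^6=3\|a\|_{L^6}^4\f{d}{dt}\|a\|_{L^6}^2$, divide by $\|a\|_{L^6}^4$, and apply Young's inequality to absorb half of the resulting damping, arriving at $\f12\f{d}{dt}\|a\|_{L^6}^2+\f{1}{12}\|a\|_{L^6}^2\lesssim(1+\|a\|_{L^\infty}^2)\|F\|_{L^6}^2$. Multiplying by $12$ and invoking the elliptic estimate $\|\na F\|_{L^2}\lesssim\|\r\dot{\vv u}\|_{L^2}$ from Lemma~\ref{lemma for elliptic system}, the Sobolev embedding $\|F\|_{L^6}\lesssim\|\na F\|_{L^2}$, and $\|\r\dot{\vv u}\|_{L^2}^2\le\|\r\|_{L^\infty}\|\sqrt\r\dot{\vv u}\|_{L^2}^2$ closes the estimate. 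The one point requiring care is the bookkeeping of numerical constants so that the damping coefficient $1$ in front of $\|a\|_{L^6}^2$ survives Young's inequality; the observation that $a\ge-1$ converts the indefinite cubic term into a harmless quadratic one (rather than requiring an $L^\infty$ smallness assumption on $a$) is what makes the argument work uniformly in $\ga\ge1$.
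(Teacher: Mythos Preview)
Your argument is correct and essentially identical to the paper's own proof. The paper also multiplies \eqref{E 11} by $a^5=|a|^4a$, substitutes $\div\vv u=F+a$, uses the pointwise bound $\gamma+(\gamma-\f16)a\ge\f16$ (equivalent to your observation that $a\ge-1$ forces $\int a|a|^6\ge-\|a\|_{L^6}^6$), and then closes with H\"older, Young, Sobolev, and Lemma~\ref{lemma for elliptic system} exactly as you do.
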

\begin{proof} By multiplying \eqref{E 11} with $a^5$ and then integrating the resulting equation over $\R^3$, we find
\beno
\f16\f{d}{dt}\|a\|_{L^6}^6+\f16\int_{\R^3}\vv u\cdot\na a^6\, dx
+\gamma\int_{\R^3}\div\vv u\, a^5\, dx+\gamma\int_{\R^3}\div\vv u\, a^6\, dx=0,
\eeno
from which, we infer
\beq\label{E 32}
\f16\f{d}{dt}\|a\|_{L^6}^6
+\gamma\int_{\R^3}\div\vv u\, a^5\, dx+(\gamma-\f16)\int_{\R^3}\div\vv u \,a^6\, dx=0,
\eeq

Noticing that
\beno\begin{aligned}
&\int_{\R^3}\div\vv u\, a^5\, dx=\int_{\R^3}F a^5\, dx
+\int_{\R^3}a^6\, dx,\\
&\int_{\R^3}\div\vv u\, a^6\, dx=\int_{\R^3}F a^6\, dx
+\int_{\R^3}a^7\, dx,
\end{aligned}\eeno
so that we deduce from \eqref{E 32} that
\beq\label{E 33}
\f16\f{d}{dt}\|a\|_{L^6}^6+\int_{\R^3}\bigl(\gamma+(\gamma-\f16)a\bigr) a^6\, dx=
-\gamma\int_{\R^3}F a^5\, dx-(\gamma-\f16)\int_{\R^3}F a^6\, dx.
\eeq
Observing that $a=\r^\gamma-1\geq-1$, one has
\beno
\gamma+(\gamma-\f16)a\geq\f16,
\eeno
and hence
\beno
\f{d}{dt}\|a\|_{L^6}^6+\|a\|_{L^6}^6\leq 6\gamma
(1+\|a\|_{L^\infty})\|F\|_{L^6}\|a\|_{L^6}^5.
\eeno
Then we get
\beno
3\f{d}{dt}\|a\|_{L^6}^2+\|a\|_{L^6}^2\leq 6\gamma
(1+\|a\|_{L^\infty})\|F\|_{L^6}\|a\|_{L^6}.
\eeno
 Applying Young's inequality yields
\beno
6\f{d}{dt}\|a\|_{L^6}^2+\|a\|_{L^6}^2\leq C
(1+\|a\|_{L^\infty}^2)\|\na F\|_{L^2}^2,
\eeno
from which and \eqref{elliptic estimates}, we deduce \eqref{estimates for a in L 6}. This completes the  proof of Proposition \ref{prop for a in L 6}.
\end{proof}

\subsubsection{Estimate of $\r$} To close  the energy estimate, we  need the upper bound of $\r$.

\begin{lem}\label{lemma for rho}
Let $(\r,\vv u)$ be a smooth enough solution of \eqref{CNS} on $[0,T]$. Then for $t\leq T$, one has
\beq\label{estimates for rho}
\|\r\|_{L^\infty_t(L^\infty)}\leq M\exp\Bigl(\f{1}{2(M^\gamma-1)}\|F\|_{L^2_t(L^\infty)}^2\Bigr),
\eeq
where  $M$ is a positive constant so that  $M>\max\bigl(1,\|\r_0\|_{L^\infty}\bigr)$.
\end{lem}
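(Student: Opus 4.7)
The plan is to work in Lagrangian coordinates and exploit the nonlinear damping structure hidden in the continuity equation once one isolates the effective viscous flux. First I introduce the flow map $X(t,x)$ defined by $\partial_t X(t,x) = \vv u(t, X(t,x))$ with $X(0,x) = x$, and set $\tilde\r(t,x) \eqdefa \r(t, X(t,x))$. Combining the transport equation of \eqref{CNS} with the decomposition $\div\vv u = F + a$ and $a = \r^\gamma - 1$, I obtain the pointwise ODE along each trajectory
\[
\f{d\tilde\r}{dt} + \tilde\r(\tilde\r^\gamma - 1) = -\tilde\r\, F(t, X(t,x)).
\]
Since the smooth density stays strictly positive, dividing through by $\tilde\r$ yields the scalar ODE
\[
\f{d}{dt}\ln \tilde\r(t,x) = -(\tilde\r^\gamma - 1) - F(t, X(t,x)).
\]
The key point is already visible: the term $-(\tilde\r^\gamma - 1)$ gives a genuine nonlinear dissipation as soon as $\tilde\r > 1$, bounded below in magnitude by $M^\gamma - 1 > 0$ whenever $\tilde\r \geq M$.

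The main step is then a maximal-interval argument. Fix $x\in\R^3$ and $t \leq T$, and suppose $\tilde\r(t,x) > M$ (otherwise the desired bound is immediate). Since $\tilde\r(0,x) \leq \|\r_0\|_{L^\infty} < M$ and $\tilde\r$ is continuous in time, there is a last $t_1 \in [0,t)$ with $\tilde\r(t_1,x) = M$ and $\tilde\r(s,x) > M$ for every $s \in (t_1, t]$. Integrating the logarithmic ODE on $[t_1,t]$ and using $\tilde\r^\gamma - 1 \geq M^\gamma - 1$ there gives
\[
\ln \f{\tilde\r(t,x)}{M} \leq -(M^\gamma - 1)(t - t_1) + \int_{t_1}^{t} |F(s, X(s,x))|\,ds.
\]

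To balance the two terms on the right-hand side, I estimate the integral via Cauchy--Schwarz by $\sqrt{t-t_1}\,\|F\|_{L^2_t(L^\infty)}$ and then apply Young's inequality with a weight tailored to the damping constant,
\[
\sqrt{t - t_1}\,\|F\|_{L^2_t(L^\infty)} \leq \f{M^\gamma - 1}{2}(t - t_1) + \f{1}{2(M^\gamma - 1)}\|F\|_{L^2_t(L^\infty)}^2.
\]
This turns the previous display into
\[
\ln \f{\tilde\r(t,x)}{M} \leq -\f{M^\gamma - 1}{2}(t - t_1) + \f{1}{2(M^\gamma - 1)}\|F\|_{L^2_t(L^\infty)}^2 \leq \f{1}{2(M^\gamma - 1)}\|F\|_{L^2_t(L^\infty)}^2,
\]
and exponentiating and then taking the supremum in $x$ (equivalent to the Eulerian supremum, since $X(t,\cdot)$ is a diffeomorphism of $\R^3$) and in $t \leq T$ yields \eqref{estimates for rho}.

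I do not expect any single step to be a serious obstacle; the argument is essentially conceptual. The main insight is that the substitution $\div \vv u = F + a$ converts the continuity equation into an ODE whose nonlinearity $\tilde\r^{\gamma+1}$ is genuinely damping whenever $\tilde\r > 1$, and the sharp choice of the Young's-inequality weight is what produces the constant $\f{1}{2(M^\gamma - 1)}$ appearing in the statement. The only mild care required is the continuity/topological step producing the time $t_1$, which is standard for smooth solutions.
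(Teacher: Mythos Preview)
Your proposal is correct and follows essentially the same approach as the paper: Lagrangian coordinates, the decomposition $\div\vv u = F + a$ to expose the damping term $-(\tilde\r^\gamma-1)$, and Cauchy--Schwarz plus Young's inequality with weight $M^\gamma-1$ to absorb the $F$-contribution. The only organizational difference is that you fix the target time $t$ and locate the \emph{last} crossing time $t_1$ with $\tilde\r(t_1,x)=M$, whereas the paper locates the \emph{first} crossing and then iterates forward; your version is slightly cleaner since it avoids the iteration, but the substance is identical.
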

\begin{proof} Let us first
introduce the flow map $X(t;\tau,y)$ associated with $\vv u$ as follows
\beq\label{trajectory}
\left\{\begin{aligned}
&\f{d}{dt}X(t;\tau,y)=\vv u(t,X(t;\tau,y)),\quad0\leq\tau\leq t,\\
&X(\tau;\tau,y)=y.
\end{aligned}\right.
\eeq
In particular,  we denote $X(t,y)\eqdefa X(t;0,y).$
Then we deduce from the transport equation of \eqref{CNS} that
\beq\label{E 37}
\f{d}{dt}\r(t,X(t,y))+(\r\div\vv u)(t,X(t,y))=0,
\eeq
which gives rise to
\beq\label{expression for rho}\begin{aligned}
\r(t,X(t,y))&=\r(t_1,X(t_1,y))\exp\Bigl(-\int_{t_1}^t\bigl(a+F\bigr)(\tau,X(\tau,y))d\tau\Bigr),
\end{aligned}\eeq
for any $t_1,t\in[0,T]$.

For fixed $y\in\R^3$, due to $M>\max\bigl(1,\|\r_0\|_{L^\infty}\bigr)$ and the continuity,  we may assume that there exists some $t_1>0$ so that
\beq \label{E37b}
\r(t,X(t,y))<M\quad \mbox{for any}\ \ t\in [0,t_1)\quad \text{and}\quad
\r(t_1,X(t_1,y))=M.
\eeq
Otherwise, there holds $\r(t,X(t,y))\leq M$ for any $t\in [0,T]$.

Now without loss of generality, we may further assume that $t_1<T$. Suppose that there exists $t_2>t_1$ so that
 $\r(t,X(t,y))\geq M\,\,\text{for any}\, t\in[t_1,t_2]$,
which ensures that
\beno
a(t,X(t,y))=\r^\gamma(t,X(t,y))-1\geq M^\gamma-1\quad\text{for any}\,\, t\in[t_1,t_2].
\eeno
 We thus deduce from \eqref{expression for rho} that for any $t\in[t_1,t_2]$
\beno\begin{aligned}
\r(t,X(t,y))
&\leq M\exp\Bigl(-(M^\gamma-1)(t-t_1)+(t-t_1)^{\f12}\|F\|_{L^2_t(L^\infty)}\Bigr)\\
&\leq M\exp\Bigl(-\f12(M^\gamma-1)(t-t_1)+\f{1}{2(M^\gamma-1)}\|F\|_{L^2_t(L^\infty)}^2\Bigr),
\end{aligned}\eeno
which together with \eqref{E37b} implies that for all $t\in[0,t_2]$,
\beq \label{E37a}
\r(t,X(t,y))\leq
 M\exp\Bigl(\f{1}{2(M^\gamma-1)}\|F\|_{L^2_t(L^\infty)}^2\Bigr).
 \eeq
If $t_2=T$, then we end the proof of the lemma. Otherwise  there holds $
\r(t_2,X(t_2,y))= M.$ This means that we may repeat the above argument to prove that \eqref{E37a} holds for $t\in [0,T].$
Since \eqref{E37a} holds for any $y\in\R^3$, we conclude the proof of \eqref{estimates for rho}. The lemma is proved.
\end{proof}

Now the bound of $\|\r\|_{L^\infty_t(L^\infty)}$ is reduced to the estimate of $\|F\|_{L^2_t(L^\infty)}$.

\begin{lem}\label{lemma for q}
Let $(\r,\vv u)$ be a smooth enough solution of \eqref{CNS} on $[0,T]$. Then for $t\leq T$, one has
\beq\label{estimates for q}
\|F\|_{L^2_t(L^\infty)}\lesssim\|\r\|_{L^\infty_t(L^\infty)}^{\f34}\|\sqrt\r\dot{\vv u}\|_{L^2_t(L^2)}^{\f12}\|\na\dot{\vv u}\|_{L^2_t(L^2)}^{\f12}.
\eeq
\end{lem}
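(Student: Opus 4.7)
The plan is to reduce the $L^\infty$ bound on $F$ to $L^2$ and $L^6$ bounds on $\nabla F$ via a Gagliardo--Nirenberg interpolation, then feed those two bounds into the elliptic estimate \eqref{elliptic estimates} from Lemma \ref{lemma for elliptic system}, and finally integrate in time with a Cauchy--Schwarz in $t$.

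First I would recall the scale-invariant interpolation on $\R^3$,
\[
\|f\|_{L^\infty(\R^3)}\lesssim \|\nabla f\|_{L^2}^{1/2}\|\nabla f\|_{L^6}^{1/2},
\]
which is consistent with scaling: under $f\mapsto f(\lambda\cdot)$, the right-hand side scales as $\lambda^{\frac12(1-3/2)+\frac12(1-1/2)}=\lambda^{0}$. Applying it to $F$, which belongs to a homogeneous class thanks to $F=\Delta(-\Delta)^{-1}F$ with $\Delta F=\div(\r\dot{\vv u})$, gives
\[
\|F\|_{L^\infty}\lesssim \|\nabla F\|_{L^2}^{1/2}\|\nabla F\|_{L^6}^{1/2}.
\]

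Next I would use \eqref{elliptic estimates} with $p=2$ and $p=6$. For $p=2$,
\[
\|\nabla F\|_{L^2}\lesssim\|\r\dot{\vv u}\|_{L^2}\le \|\sqrt{\r}\|_{L^\infty}\|\sqrt{\r}\dot{\vv u}\|_{L^2}=\|\r\|_{L^\infty}^{1/2}\|\sqrt{\r}\dot{\vv u}\|_{L^2}.
\]
For $p=6$, I combine the elliptic bound with H\"older and the Sobolev embedding $\dot H^1(\R^3)\hookrightarrow L^6(\R^3)$ to get
\[
\|\nabla F\|_{L^6}\lesssim\|\r\dot{\vv u}\|_{L^6}\le \|\r\|_{L^\infty}\|\dot{\vv u}\|_{L^6}\lesssim \|\r\|_{L^\infty}\|\nabla \dot{\vv u}\|_{L^2}.
\]
Multiplying the two pointwise-in-$t$ yields
\[
\|F(t)\|_{L^\infty}\lesssim \|\r(t)\|_{L^\infty}^{3/4}\|\sqrt{\r}\dot{\vv u}(t)\|_{L^2}^{1/2}\|\nabla \dot{\vv u}(t)\|_{L^2}^{1/2}.
\]

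Finally I would square, integrate over $[0,t]$, pull out $\|\r\|_{L^\infty_t(L^\infty)}^{3/2}$, and apply Cauchy--Schwarz in time to the remaining factor,
\[
\int_0^t\|\sqrt{\r}\dot{\vv u}\|_{L^2}\|\nabla\dot{\vv u}\|_{L^2}\,d\tau\le \|\sqrt{\r}\dot{\vv u}\|_{L^2_t(L^2)}\|\nabla\dot{\vv u}\|_{L^2_t(L^2)},
\]
and take square roots to arrive at \eqref{estimates for q}. There is no real obstacle here; the only point that needs care is ensuring the Gagliardo--Nirenberg endpoint with $\nabla F\in L^6$ is applicable — i.e.\ that $F$ decays at infinity, which follows from $F=-(-\Delta)^{-1}\div(\r\dot{\vv u})$ and the integrability of $\r\dot{\vv u}$ implicit in the regularity class of the smooth solution being considered.
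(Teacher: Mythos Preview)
Your proof is correct and follows essentially the same route as the paper: the paper uses the interpolation $\|F\|_{L^\infty}\lesssim\|F\|_{L^6}^{1/2}\|\nabla F\|_{L^6}^{1/2}\lesssim\|\nabla F\|_{L^2}^{1/2}\|\nabla F\|_{L^6}^{1/2}$, then applies the elliptic estimate \eqref{elliptic estimates} with $p=2,6$ and the same H\"older/Sobolev bounds on $\rho\dot{\vv u}$ that you wrote, obtaining the pointwise-in-$t$ inequality before taking the $L^2_t$ norm. The only cosmetic difference is that the paper states the interpolation in two steps via $\|F\|_{L^6}\lesssim\|\nabla F\|_{L^2}$, whereas you state the Gagliardo--Nirenberg inequality directly.
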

\begin{proof}
Thanks to the elliptic estimate \eqref{elliptic estimates} and the following interpolation inequality
\beq\label{interpolation estimate for q}
\|F\|_{L^\infty}\lesssim\|F\|_{L^6}^{\f12}\|\na F\|_{L^6}^{\f12}\lesssim\|\na F\|_{L^2}^{\f12}\|\na F\|_{L^6}^{\f12},
\eeq
we get
\beno
\|F\|_{L^\infty}\lesssim\|\r\dot{\vv u}\|_{L^2}^{\f12}\|\r\dot{\vv u}\|_{L^6}^{\f12}\lesssim\|\sqrt\r\|_{L^\infty}^{\f32}\|\sqrt\r\dot{\vv u}\|_{L^2}^{\f12}\|\na\dot{\vv u}\|_{L^2}^{\f12}.
\eeno
Taking $L^2$ norm w.r.t. time $t$ on $[0,T]$ leads to
 \eqref{estimates for q}. This completes the proof of the lemma.
\end{proof}

\subsubsection{Estimate of $\na a$} To propagate the higher regularity of the solution, we need the esitmate of $\na a$.
\begin{prop}\label{prop for na a a}
Let $(\r,\vv u)$ be a smooth enough solution of \eqref{CNS} on $[0,T]$. Then for $t\leq T$, one has
\beq\label{estimate of na a 1}\begin{aligned}
\f{1}{p}\f{d}{dt}\|\na a(t)\|_{L^p}^p+\gamma\|\na a\|_{L^p}^p
\leq& \gamma\bigl(2\|\na\vv u\|_{L^\infty}+\|a\|_{L^\infty}\bigr)\|\na a\|_{L^p}^p\\
&+\gamma\bigl(1+\|a\|_{L^\infty}\bigr)\|\na F\|_{L^p}\|\na a\|_{L^p}^{p-1},\quad\text{for}\ p\geq 2.
\end{aligned}\eeq
\end{prop}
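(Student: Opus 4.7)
My plan is to differentiate the transport equation \eqref{E 11} for $a$ in space, pair the resulting equation with $|\nabla a|^{p-2}\nabla a$ in $L^2$, and carefully collect the algebraic terms so that a clean damping $\gamma\|\nabla a\|_{L^p}^p$ emerges on the left with the right constants on the other side.

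\medskip

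Starting from $\partial_t a + \vv u\cdot\nabla a + \gamma(a+1)\div\vv u = 0$, applying $\nabla$ gives
\[
\partial_t\nabla a + \vv u\cdot\nabla\nabla a + (\nabla\vv u)^T\nabla a + \gamma(\div\vv u)\,\nabla a + \gamma(a+1)\nabla\div\vv u = 0.
\]
Testing against $|\nabla a|^{p-2}\nabla a$, the convection term contributes $-\tfrac{1}{p}\int \div\vv u\,|\nabla a|^p\,dx$ after integration by parts, which combines with the $\gamma(\div\vv u)\nabla a$ contribution into $(\gamma-\tfrac{1}{p})\int\div\vv u\,|\nabla a|^p\,dx$. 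The key algebraic move is to substitute $\nabla\div\vv u = \nabla F + \nabla a$ (since $F = \div\vv u - a$) in the last term, splitting it into $\gamma\int(a+1)\nabla F\cdot|\nabla a|^{p-2}\nabla a\,dx$ plus $\gamma\|\nabla a\|_{L^p}^p + \gamma\int a\,|\nabla a|^p\,dx$. The first piece will produce the $\|\nabla F\|_{L^p}$ source term, while the second piece furnishes exactly the damping that will live on the left-hand side.

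\medskip

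After moving $\gamma\|\nabla a\|_{L^p}^p$ to the left, I will reach the identity
\[
\frac{1}{p}\frac{d}{dt}\|\nabla a\|_{L^p}^p + \gamma\|\nabla a\|_{L^p}^p = -\Bigl(\gamma-\frac{1}{p}\Bigr)\!\!\int\!\div\vv u\,|\nabla a|^p\,dx - \int(\nabla\vv u)^T\nabla a\cdot|\nabla a|^{p-2}\nabla a\,dx
\]
\[
\qquad\qquad -\gamma\!\int a\,|\nabla a|^p\,dx - \gamma\!\int(a+1)\nabla F\cdot|\nabla a|^{p-2}\nabla a\,dx.
\]
Applying $L^\infty\cdot L^p$ Hölder to the first two terms bounds their sum by $(\gamma + 1 - \tfrac{1}{p})\|\nabla\vv u\|_{L^\infty}\|\nabla a\|_{L^p}^p$, which is at most $2\gamma\|\nabla\vv u\|_{L^\infty}\|\nabla a\|_{L^p}^p$ using $\gamma\geq 1$ and $p\geq 2$. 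The third term is directly $\gamma\|a\|_{L^\infty}\|\nabla a\|_{L^p}^p$, and Hölder on the last yields $\gamma(1+\|a\|_{L^\infty})\|\nabla F\|_{L^p}\|\nabla a\|_{L^p}^{p-1}$, producing \eqref{estimate of na a 1}.

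\medskip

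The only delicate point (not a genuine obstacle) is the coefficient bookkeeping: I deliberately avoid expanding $\div\vv u = F + a$ inside the term $(\gamma - \tfrac{1}{p})\int\div\vv u\,|\nabla a|^p\,dx$, because doing so would introduce an additional $\|a\|_{L^\infty}\|\nabla a\|_{L^p}^p$ contribution and inflate the coefficient of $\|a\|_{L^\infty}$ beyond $\gamma$. Keeping $\div\vv u$ grouped and directly bounding it by $\|\nabla\vv u\|_{L^\infty}$ is exactly what preserves the sharp constants demanded by the statement.
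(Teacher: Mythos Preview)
Your proof is correct and follows essentially the same approach as the paper: differentiate the transport equation for $a$, test against $|\nabla a|^{p-2}\nabla a$, extract the damping $\gamma\|\nabla a\|_{L^p}^p$ via the substitution $\nabla\div\vv u=\nabla F+\nabla a$, and bound the remaining terms with H\"older. The only cosmetic difference is that the paper first rewrites the $a$-equation as $\partial_t a+\vv u\cdot\nabla a+\gamma a+\gamma F+\gamma a\,\div\vv u=0$ before differentiating (so the damping appears directly), whereas you differentiate first and then split $(a+1)\nabla\div\vv u$; the resulting identities are algebraically identical, and your coefficient bookkeeping (including the observation that $\gamma+1-\tfrac{1}{p}\leq 2\gamma$ for $\gamma\geq 1$, $p\geq 2$) matches the paper's.
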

\begin{proof}
By substituting  $\div\vv u=F+a$ in  the $a$-equation \eqref{E 11},  we write
\beno
\p_ta+\vv u\cdot\na a+\gamma a+\gamma F+\gamma a\div\vv u=0.
\eeno
Applying $\na$ to the above equation gives
\beq\label{E 72}
\p_t(\na a)+\vv u\cdot\na(\na a)+\gamma\na a=-\gamma\na F-\gamma\na a\div\vv u-\gamma a\na\div\vv u-(\na\vv u)^T\na a.
\eeq
By taking $L^2$-inner product of \eqref{E 72} with $|\na a|^{p-2}\na a$ and using  integrating by parts, we find
\beno\begin{aligned}
\f{1}{p}\f{d}{dt}\|\na a(t)\|_{L^p}^p+\gamma\|\na a\|_{L^p}^p=&-\gamma\bigl(\na F\,\big|\,|\na a|^{p-2}\na a\bigr)
-(\gamma-\f{1}{p})\int_{\R^3}\div\vv u|\na a|^pdx\\
&-\gamma\bigl(a\na\div\vv u\,\big|\,|\na a|^{p-2}\na a\bigr)-\bigl(\na\vv u\,\big|\,|\na a|^{p-2}\na a\otimes\na a\bigr)\\
\leq&\gamma\|\na F\|_{L^p}\|\na a\|_{L^p}^{p-1}+(\gamma-\f{1}{p})\|\div\vv u\|_{L^\infty}\|\na a\|_{L^p}^p\\
&+\gamma\|a\|_{L^\infty}\|\na\div\vv u\|_{L^p}\|\na a\|_{L^p}^{p-1}+\|\na\vv u\|_{L^\infty}\|\na a\|_{L^p}^p.
\end{aligned}\eeno
Using once again the fact that $\div\vv u=F+a$, we obtain \eqref{estimate of na a 1}. The lemma is proved.
\end{proof}

\subsection{Proof of Theorem \ref{global existence thm}}
 Based on the   {\it a priori }  estimates for smooth enough solutions of   \eqref{CNS} obtained in the previous subsections, we are in a position
 to complete the proof of Theorem \ref{global existence thm}. We first observe from \eqref{def of H rho} that
\beq\label{E 6}
h(\r)=\f{\gamma}{2}\int_0^1[\th\r+(1-\th)]^{\gamma-2}\,d\th(\r-1)^2\,\,\text{for}\,\, \gamma\geq1,
\eeq
which implies
\beno
H(\r)\geq\left\{\begin{aligned}
&\f{\gamma}{2(\gamma-1)}\|\r-1\|_{L^2}^2,\,\,\text{for}\,\, \gamma\geq 2,\\
&\f{\gamma}{2}(\|\r\|_{L^\infty}+1)^{\gamma-2}\|\r-1\|_{L^2}^2,\,\,\text{for}\,\, 1\leq\gamma<2.
\end{aligned}\right.
\eeno

 \begin{lem}\label{prop for a in L 3}
 {\sl Let $H(\r)$ be given by \eqref{def of H rho}.  Then for  $R>1$ and $p\in[2,6)$, we have
\beq\label{estimates for a in L 3}
\|a\|_{L^p}^p\leq4\gamma R^{p-1} H(\r)+R^{p-6}\|a\|_{L^6}^6,
\eeq}
\end{lem}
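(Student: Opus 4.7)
The plan is to split the integral $\|a\|_{L^p}^p = \int_{\R^3} |a|^p\, dx$ into two regions, $\{|a| > R\}$ and $\{|a| \leq R\}$, and bound each piece separately. Since $p \in [2,6)$, both exponents $p-2$ and $6-p$ are non-negative, which is what makes both pointwise inequalities work.

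On the region $\{|a| > R\}$, since $p - 6 < 0$ and $|a| > R$, I would use the pointwise estimate $|a|^{p-6} \leq R^{p-6}$, from which
\begin{equation*}
|a|^p = |a|^{p-6}\cdot |a|^6 \leq R^{p-6}|a|^6 \quad \text{on } \{|a| > R\}.
\end{equation*}
Integrating yields the second term $R^{p-6} \|a\|_{L^6}^6$. This piece is routine.

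On the region $\{|a| \leq R\}$, since $p - 2 \geq 0$, the pointwise inequality $|a|^{p-2} \leq R^{p-2}$ gives $|a|^p \leq R^{p-2} a^2$. It therefore suffices to establish the pointwise bound
\begin{equation*}
a^2 \leq 4\gamma R\, h(\rho) \qquad \text{whenever } |a| \leq R,\ R > 1,
\end{equation*}
which when integrated and multiplied by $R^{p-2}$ gives exactly $4\gamma R^{p-1} H(\rho)$. To prove this pointwise inequality, I would exploit the common integral representations of $a$ and $h$ in terms of $\phi(\theta) \eqdef \theta\rho + (1-\theta)$. Namely, the identity $\rho^\gamma - 1 = \gamma(\rho-1)\int_0^1 \phi^{\gamma-1}\, d\theta$ gives $a = \gamma(\rho-1)\int_0^1 \phi^{\gamma-1}\, d\theta$, while \eqref{E 6} provides $h(\rho) = \frac{\gamma}{2}(\rho-1)^2 \int_0^1 \phi^{\gamma-2}\, d\theta$. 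By Cauchy--Schwarz,
\begin{equation*}
\Bigl(\int_0^1 \phi^{\gamma-1}\, d\theta\Bigr)^2 = \Bigl(\int_0^1 \phi^{\gamma/2}\cdot\phi^{(\gamma-2)/2}\, d\theta\Bigr)^2 \leq \int_0^1 \phi^\gamma\, d\theta \cdot \int_0^1 \phi^{\gamma-2}\, d\theta.
\end{equation*}
Dividing $a^2$ by $h(\rho)$ using the above representations, the $\int_0^1 \phi^{\gamma-2}\, d\theta$ factors cancel and leave $a^2/h(\rho) \leq 2\gamma \int_0^1 \phi^\gamma\, d\theta$.

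The main (mild) obstacle is then to control $\int_0^1 \phi^\gamma\, d\theta$ by a constant times $R$. Here the constraint $|a| \leq R$ together with $R > 1$ enters: since $\phi(\theta) \in [\min(\rho,1), \max(\rho,1)]$, one has $\phi^\gamma \leq \max(\rho^\gamma, 1) = \max(a+1, 1) \leq 1 + R \leq 2R$, whence $\int_0^1 \phi^\gamma\, d\theta \leq 2R$. This finally gives $a^2 \leq 4\gamma R\, h(\rho)$ as desired. The borderline case $\gamma = 1$ can be treated identically by interpreting $\phi^{\gamma-2} = \phi^{-1}$ in \eqref{E 6}, or alternatively by passing to the limit $\gamma \to 1^+$ using the explicit formula $h(\rho) = \rho\ln\rho - (\rho-1)$. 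Adding the contributions from the two regions yields the claim.
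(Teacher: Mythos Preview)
Your proof is correct and follows essentially the same route as the paper: the same splitting at level $R$, the same integral representations of $a$ and $h(\rho)$ via $\phi(\theta)=\theta\rho+(1-\theta)$, and the same Cauchy--Schwarz (the paper calls it H\"older) step to get $a^2/h(\rho)\le 2\gamma\int_0^1\phi^\gamma\,d\theta\le 2\gamma\max(\rho^\gamma,1)\le 4\gamma R$ on $\{|a|\le R\}$.
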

\begin{proof} We first observe that
\beq\label{E 35}
|a|^p=|a|^p 1_{|a|\leq R}+|a|^p1_{|a|>R}\leq R^{p-2}|a|^2 1_{|a|\leq R}
+R^{p-6}|a|^6.
\eeq
Due to $a=\r^\gamma-1=\gamma\int_0^1[\th\r+(1-\th)]^{\gamma-1}\,d\th\cdot(\r-1)$, we get,
by applying  \eqref{E 6} and the H\"older's inequality,
 that
\beno
\f{a^2}{h(\r)}=2\gamma\f{\bigl|\int_0^1[\th\r+(1-\th)]^{\gamma-1}d\th\bigr|^2}{\int_0^1[\th\r+(1-\th)]^{\gamma-2}d\th}
\leq 2\gamma\int_0^1[\th\r+(1-\th)]^\gamma d\th
\leq2\gamma\max\{\r^\gamma,1\},
\eeno
which implies that if $R>1$,
\beno
|a|^2\leq 4\gamma R h(\r) \quad\text{for }\,\, |a|\leq R.
\eeno
By substituting the above estimate into \eqref{E 35}, we obtain \eqref{estimates for a in L 3}.
\end{proof}

\begin{lem}\label{S2lem5}
{\sl Let $\vv u$ be a smooth vector field and $q\in (3,\infty).$ We denote $\mathfrak{h}\eqdefa \|\curl\vv u\|_{L^\infty}+\|\div\vv u\|_{L^\infty}.$ Then one has
\beq \label{S2eq5}
\|\na\vv u\|_{L^\infty}\leq C\Bigl(\|\na \vv u\|_{L^2}+\f{q}{q-3}\mathfrak{h}\log_2\Bigl(2+\f{\|\na^2 \vv u\|_{L^q}}{\mathfrak{h}}\Bigr)\Bigr).
\eeq}
\end{lem}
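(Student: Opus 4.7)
The plan is to estimate $\|\nabla\vv u\|_{L^\infty}$ via a Littlewood-Paley decomposition with a cutoff $N\in\N$ to be chosen, balancing a low-frequency piece controlled by $\|\nabla\vv u\|_{L^2}$, a middle-frequency piece controlled by $\mathfrak{h}$, and a high-frequency piece controlled by $\|\nabla^2\vv u\|_{L^q}$. The starting observation is that $-\Delta\vv u=\curl\curl\vv u-\nabla\div\vv u$, so by inverting on each dyadic annulus, $\dot\Delta_j\nabla\vv u$ is a zero-order Fourier multiplier (of Riesz type) applied to $(\curl\vv u,\div\vv u)$. Writing
\begin{equation*}
\nabla\vv u=\Delta_{-1}\nabla\vv u+\sum_{0\leq j<N}\dot\Delta_j\nabla\vv u+\sum_{j\geq N}\dot\Delta_j\nabla\vv u,
\end{equation*}
I will estimate each piece separately.

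For the low-frequency block, Bernstein's inequality gives $\|\Delta_{-1}\nabla\vv u\|_{L^\infty}\lesssim \|\Delta_{-1}\nabla\vv u\|_{L^2}\lesssim \|\nabla\vv u\|_{L^2}$. For the middle-frequency blocks, each $\dot\Delta_j\nabla\vv u$ is obtained as a convolution of $(\curl\vv u,\div\vv u)$ with a kernel whose symbol is $\xi\otimes\xi/|\xi|^2\cdot\varphi(2^{-j}|\xi|)$; by scaling invariance, the $L^1$ norm of this kernel equals $\bar c_*$ independently of $j$, so $\|\dot\Delta_j\nabla\vv u\|_{L^\infty}\leq C\bar c_*\mathfrak{h}$, and summing the $N$ terms gives a contribution $\lesssim N\mathfrak{h}$. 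For the high-frequency tail, I apply Bernstein from $L^q$ to $L^\infty$ together with the gradient gain $2^{-j}$ from $\nabla\vv u$ to $\nabla^2\vv u$:
\begin{equation*}
\|\dot\Delta_j\nabla\vv u\|_{L^\infty}\lesssim 2^{3j/q}\|\dot\Delta_j\nabla\vv u\|_{L^q}\lesssim 2^{j(3/q-1)}\|\nabla^2\vv u\|_{L^q}.
\end{equation*}
Since $q>3$, the exponent $3/q-1$ is negative and the geometric sum over $j\geq N$ produces the factor $(1-2^{3/q-1})^{-1}\sim q/(q-3)$ (from $1-2^{(3-q)/q}\approx (q-3)\ln 2/q$), yielding a high-frequency bound $\lesssim \frac{q}{q-3}2^{N(3/q-1)}\|\nabla^2\vv u\|_{L^q}$.

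Combining the three contributions,
\begin{equation*}
\|\nabla\vv u\|_{L^\infty}\leq C\Bigl(\|\nabla\vv u\|_{L^2}+N\mathfrak{h}+\tfrac{q}{q-3}\,2^{N(3/q-1)}\|\nabla^2\vv u\|_{L^q}\Bigr),
\end{equation*}
and I will choose $N$ to minimize the right-hand side by setting the high-frequency contribution comparable to $\mathfrak{h}$, i.e.\
\begin{equation*}
N=\max\Bigl\{0,\ \bigl\lceil\tfrac{q}{q-3}\log_2\bigl(\|\nabla^2\vv u\|_{L^q}/\mathfrak{h}\bigr)\bigr\rceil\Bigr\}.
\end{equation*}
With this choice, the middle-frequency term becomes $\lesssim \frac{q}{q-3}\mathfrak{h}\log_2(2+\|\nabla^2\vv u\|_{L^q}/\mathfrak{h})$ and the high-frequency term is absorbed into the same quantity, giving \eqref{S2eq5}. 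The shift by $2$ in the logarithm exactly handles the degenerate regime $\|\nabla^2\vv u\|_{L^q}\lesssim \mathfrak{h}$ where $N=0$.

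The only mildly delicate point is tracking the constant $q/(q-3)$ cleanly from the geometric tail: this amounts to the elementary estimate $1-2^{(3-q)/q}\geq c(q-3)/q$ for $q$ bounded in a fixed compact subset of $(3,\infty)$, together with the $j$-uniform multiplier bound on each annulus, which is precisely the content of the constant $\bar c_*$ already singled out in the paper. Everything else is standard Bernstein and summation.
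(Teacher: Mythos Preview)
Your proof is correct and follows essentially the same route as the paper: the identity $-\Delta\vv u=\curl\curl\vv u-\nabla\div\vv u$, a three-piece Littlewood--Paley split (low/middle/high), Bernstein for the low and high parts, the $j$-uniform multiplier bound for the middle part, and the same optimal choice of $N\sim\frac{q}{q-3}\log_2(2+\|\nabla^2\vv u\|_{L^q}/\mathfrak{h})$. One small remark: the inequality $1-2^{(3-q)/q}\geq c(q-3)/q$ actually holds uniformly for all $q\in(3,\infty)$ (both sides are comparable as $q\to 3^+$ and the left side tends to $1/2$ as $q\to\infty$), so your restriction to compact subsets is unnecessary.
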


\begin{proof}
In view of \eqref{S2eq3}, we write
\beno
\na\vv u=\na(-\D)^{-1}\curl\curl\vv u-\na(-\D)^{-1}\na\div\vv u.
\eeno
Let $\dD_j$ be the dyadic operator defined in \eqref{S1eq18}. Then we have
\beq \label{S2eq4}
\begin{aligned}
\|\dD_j\na \vv u\|_{L^\infty}\lesssim & \|\dD_j\curl\vv u\|_{L^\infty}+\|\dD_j\div\vv u\|_{L^\infty}\\
\lesssim & \|\curl\vv u\|_{L^\infty}+\|\div\vv u\|_{L^\infty}\eqdefa \mathfrak{h}.
\end{aligned}
\eeq

Thanks to \eqref{S2eq4} and Bernstein Lemma (see \cite{bcd}), for any integer $N,$ we write
\begin{align*}
\|\na\vv u\|_{L^\infty}\leq &\sum_{j\leq 0}\|\dD_j\na \vv u\|_{L^\infty}+\sum_{j=1}^N\|\dD_j\na \vv u\|_{L^\infty}
+\sum_{j>N}\|\dD_j\na \vv u\|_{L^\infty}\\
\lesssim &\sum_{j\leq 0}2^{\f32j}\|\dD_j\na \vv u\|_{L^2}+N\mathfrak{h}+\sum_{j>N}2^{-\left(1-\f3q\right)j}\|\dD_j\na^2 \vv u\|_{L^q}\\
\lesssim & \|\na \vv u\|_{L^2}+N\mathfrak{h}+2^{-\left(1-\f3q\right)N}\|\na^2 \vv u\|_{L^q}.
\end{align*}
Let us take $N$ so that
\beno
N\sim \f{q}{q-3}\log_2\Bigl(2+\f{\|\na^2 \vv u\|_{L^q}}{\mathfrak{h}}\Bigr)
\eeno
in the above inequality, we achieve \eqref{S2eq5}. It completes the proof of the lemma.
\end{proof}

Now we are in a position to complete the proof of   Theorem  \ref{global existence thm}.

\begin{proof}[Proof of Theorem \ref{global existence thm}]
Firstly, the assumptions of Theorem \ref{global existence thm} show that
\beno
\r_0(x)>0, \ \na\r_0\in L^q(\R^3),\ \vv u_0\in H^2(\R^3)\hookrightarrow W^{2-\f{2}{q},q}(\R^3),
\eeno
where
\beno
W^{2-\f2q,q}(\R^3)\eqdefa \left\{\ f\in W^{1,q}(\R^3)\,|\, \Bigl(\int_{\R^3}\int_{\R^3}\f{|\na f(x)-\na f(y)|^q}{|x-y|^{3+q-2}}dydx\Bigr)^{\f1q}<\infty\ \right\}.
\eeno
By the classical theory of compressible Navier-Stokes equations (see \cite{Solo76} for instance), the system  \eqref{CNS}  has a unique local strong solution $(\r,\vv u)$ on $[0,T^\ast)$ such that for any $T<T^\ast$
\beno
\r\in L^\infty\bigl([0,T];L^\infty\cap\dot{W}^{1,q}(\R^3)\bigr),\quad
\vv u\in W^{2,1}_q([0,T]\times\R^3).
\eeno

We assume that $T^*$ is the lifespan of this  solution to \eqref{CNS}. We shall prove below by a continuous argument that $T^\ast=\infty$
as long as $\e$ is sufficiently small in \eqref{initial ansatz}.

Due to \eqref{initial ansatz for density}, let $\mathcal{C}=4\delta^{\alpha}(\|\r_0\|_{L^\infty}+1)^\gamma\in[4,\,4\times 3^\gamma]$. We define $T^\star$ to be the maximal time $T\in (0,T^\ast)$ so that there hold
\beq\label{ansatz 1}
\sup_{t\in[0,T^\star]}\|a(t)\|_{L^\infty}\leq 2\mathcal{C}\delta^{-\al},
\eeq
and
\beq\label{ansatz 2}\begin{aligned}
&\delta^{\al-3}\Bigl\{\sup_{t\in[0,T^\star]}\bigl(\|(\sqrt{\r}\vv u, \curl\vv u, F)(t)\|_{L^2}^2+ H(\r(t))\bigr)+\|(\na\vv u,\sqrt\r\dot{\vv u})\|_{L^2_{T^\star}(L^2)}^2\Bigr\}\leq 2\mathcal{M}\e^2,\\
&
\delta^{2\al-1}\Bigl\{\sup_{t\in[0,T^\star]}\bigl(\|(\sqrt\r\dot{\vv u},\div\vv u)(t)\|_{L^2}^2
+\|a(t)\|_{L^6}^2\bigr)+\|\na\dot{\vv u}\|_{L^2_{T^\star}(L^2)}^2+\|a\|_{L^2_{T^\star}(L^6)}^2\Bigr\}\leq 2\mathcal{M}\e^2,
\end{aligned}\eeq
where $\e>0$ is a sufficiently small constant independent of $\delta,$ while $\mathcal{M}>1$ is an universal constant which will be determined later on.

In what follows, $C$ is always referred to the universal constant. We divide the proof of Theorem \ref{global existence thm} into the following two steps:\smallskip

\no{\bf  Step 1. $T^\star=\infty$ if $\e$ is small enough}\smallskip

 As a convention in this step, we shall always assume that $t\leq T^\star.$

 \no{\Large $\bullet$} {\bf  The estimate of $\sup_{t\in[0,T^\star]}\|a(t)\|_{L^3}$ and $\sup_{t\in[0,T^\star]}\|\r(t)\|_{L^\infty}$.}

 Thanks to \eqref{ansatz 2}, we deduce from \eqref{estimates for a in L 3} that for $R>1$,
\beno\begin{aligned}
\|a(t)\|_{L^3}^3&\leq 4\gamma R^2 H(\r)(t)+R^{-3}\|a(t)\|_{L^6}^6\\
&\leq C\mathcal{M}\e^2R^2\delta^{3-\al}
+C\mathcal{M}^3\e^6R^{-3}\delta^{3(1-2\al)}.
\end{aligned}\eeno
Taking $R=2\delta^{-\al}$ in the above inequality leads to
\beq\label{E 40}
\sup_{t\in[0,T^\star]}\|a(t)\|_{L^3}\leq C\mathcal{M}\e^{\f23}\delta^{1-\al}.
\eeq

While it follows from \eqref{ansatz 1} that
\beq\label{E 44}
\sup_{t\in[0,T^\star]} \|\r(t)\|_{L^\infty}\leq \bigl(2\mathcal{C}\delta^{-\al}+1\bigr)^{\f{1}{\gamma}}\leq C\mathcal{C}^{\f1\gamma}\delta^{-\f{\al}{\gamma}}.
\eeq

 \no{\Large $\bullet$} {\bf The energy estimates of $\left(\sqrt{\r}\vv u, \curl\vv u, F, \sqrt{\r}\dot{\vv u}, \div\vv u\right)$.}

 Let us denote
\beq \label{S2eq10}
\begin{split}
E_1(t)\eqdefa&\gamma\|\sqrt\r\vv u(t)\|_{L^2}^2+\gamma H(\r(t))+\f12\|\curl\vv u(t)\|_{L^2}^2+\f12\|F(t)\|_{L^2}^2,\\
E_2(t)\eqdefa&\|\sqrt\r\dot{\vv u}(t)\|_{L^2}^2+\gamma\|\div\vv u(t)\|_{L^2}^2+12\|a(t)\|_{L^6}^2.
\end{split}
\eeq
Then we get, by summing up  \eqref{basic energy equality}$\times2\ga$ with \eqref{estimates for curl u and q}, that
\beq\label{E 39}\begin{aligned}
\f{d}{dt}E_1(t)+2\gamma\|\curl\vv u\|_{L^2}^2+\gamma\|\div\vv u\|_{L^2}^2&+\|\sqrt\r\dot{\vv u}\|_{L^2}^2
\lesssim\|\r\|_{L^\infty}^{\f12} \mathfrak{A}(t)\|\sqrt\r\dot{\vv u}\|_{L^2}\|\na\vv u\|_{L^2}.
\end{aligned}\eeq
where
\beno
\mathfrak{A}(t)\eqdefa\bigl(\|\curl\vv u(t)\|_{L^2}+\|F(t)\|_{L^2}\bigr)^{\f12}\|\sqrt{\r(t)}\|_{L^\infty}^{\f12}\|\sqrt\r\dot{\vv u}(t)\|_{L^2}^{\f12}+\|a(t)\|_{L^3}.
\eeno

It follows from  \eqref{ansatz 2}, \eqref{E 40} and \eqref{E 44} that
\beq\label{F 1}\begin{aligned}
\mathfrak{A}(t)&\le C(\mathcal{C}^{\f1{4\gamma}}\mathcal{M}^{\f12}\e\delta^{1-\f{\al}{4}(3+\f{1}{\gamma})}+\mathcal{M}\e^{\f23}\delta^{1-\al}),
\end{aligned}\eeq
 from which and \eqref{E 44}, we deduce from \eqref{E 39} that
\beno\begin{aligned}
&\f{d}{dt}E_1(t)+\gamma\|\na\vv u\|_{L^2}^2
+\|\sqrt\r\dot{\vv u}\|_{L^2}^2
\le C \bigl(\mathcal{C}^{\f3{4\gamma}}\mathcal{M}^{\f12}\e\delta^{1-\f{3\al}{4}(1+\f{1}{\gamma})}+\mathcal{C}^{\f1{2\gamma}}\mathcal{M}\e^{\f23}\delta^{1-(1+\f{1}{2\gamma})\al}\bigr)\|\sqrt\r\dot{\vv u}\|_{L^2}\|\na\vv u\|_{L^2}.
\end{aligned}\eeno
 By the assumption that $\al\leq\f{2\gamma}{1+{2\gamma}}\leq \f{4}{3(1+\f{1}{\gamma})}$ and using  Young's inequality, we find that for $\e>0$ sufficiently small,
\beq\label{E 45}
\f{d}{dt}E_1(t)+\f\gamma2\|\na\vv u\|_{L^2}^2
+\f12\|\sqrt\r\dot{\vv u}\|_{L^2}^2\leq 0,
\eeq
which along with \eqref{initial ansatz} ensure
\beno\begin{aligned}
&\quad E_1(t)+\f\gamma2\|\na\vv u\|_{L^2_t(L^2)}^2
+\f12\|\sqrt\r\dot{\vv u}\|_{L^2_t(L^2)}^2\leq E_1(0)\leq\gamma\e^2\delta^{3-\al}\quad\mbox{for any}\ \ t\leq T^\star.
\end{aligned}\eeno
Then in view of \eqref{S2eq10}, there exists $C_1>1$ such that
\beq\label{E 47}\begin{aligned}
\delta^{\al-3}\Bigl\{\sup_{t\in [0,T^\star]}\bigl(\|(\sqrt{\r}\vv u,\curl\vv u, F)(t)\|_{L^2}^2+ H(\r(t))\bigr)
+\|(\na\vv u,\sqrt\r\dot{\vv u})\|_{L^2_{T^\star}(L^2)}^2\Bigr\}\leq C_1\e^2.
\end{aligned}\eeq

On the other hand, in view of \eqref{ansatz 1}, \eqref{ansatz 2}, \eqref{E 44} and \eqref{F 1}, we deduce from \eqref{estimates for sqrt rho dot u} and \eqref{estimates for a in L 6} that
\beno\begin{aligned}
\f12\f{d}{dt}E_2(t)+\|\na\dot{\vv u}\|_{L^2}^2+\|a\|_{L^6}^2
\le  & \mathfrak{A}(t)\bigl(\|\sqrt\r\|_{L^\infty}\|\sqrt\r\dot{\vv u}\|_{L^2}+\|a\|_{L^6}\bigr)\bigl(\|\na\vv u\|_{L^2}+\|\na\dot{\vv u}\|_{L^2}\bigr)\\
&+
(1+\|a\|_{L^\infty}^2)\|\r\|_{L^\infty}\|\sqrt\r\dot{\vv u}\|_{L^2}^2\\
\le &C \bigl(\mathcal{C}^{\f3{4\gamma}}\mathcal{M}^{\f12}\e\delta^{1-\f{3\al}{4}(1+\f{1}{\gamma})}+\mathcal{C}^{\f1{2\gamma}}\mathcal{M}\e^{\f23}\delta^{1-(1+\f{1}{2\gamma})\al}\bigr)\bigl(\|\sqrt\r\dot{\vv u}\|_{L^2}+\|a\|_{L^6}\bigr)\\
&\times\bigl(\|\na\vv u\|_{L^2}+\|\na\dot{\vv u}\|_{L^2}\bigr)+C\mathcal{C}^{2+\f1{\gamma}}\delta^{-2\al-\f{\al}{\gamma}}\|\sqrt\r\dot{\vv u}\|_{L^2}^2,
\end{aligned}\eeno
which along with the assumption that $\al\leq\f{2\gamma}{1+2\gamma}\leq \f{4}{3(1+\f{1}{\gamma})}$ implies
\beno
 \f12\f{d}{dt}E_2(t)+\|\na\dot{\vv u}\|_{L^2}^2+\|a\|_{L^6}^2&\le& C \bigl(\mathcal{C}^{\f3{4\gamma}}\mathcal{M}^{\f12}\e+\mathcal{C}^{\f1{2\gamma}}\mathcal{M}\e^{\f23}\bigr)\bigl(\|\sqrt\r\dot{\vv u}\|_{L^2}+\|a\|_{L^6}\bigr)\\
&&\times\bigl(\|\na\vv u\|_{L^2}+\|\na\dot{\vv u}\|_{L^2}\bigr)
+C\mathcal{C}^{2+\f1{\gamma}}\delta^{-2\al(1+\f{1}{2\gamma})}\|\sqrt\r\dot{\vv u}\|_{L^2}^2.
\eeno
So that for
 $\e$ sufficiently small, we obtain
\beq\label{E 48}\begin{aligned}
\f{d}{dt}E_2(t)&+\|\na\dot{\vv u}\|_{L^2}^2+\|a\|_{L^6}^2\le C\mathcal{C}^{2+\f1{\gamma}}\delta^{-2\al(1+\f{1}{2\gamma})}\bigl(\|\sqrt\r\dot{\vv u}\|_{L^2}^2+\e^{\f23}\|\na\vv u\|_{L^2}^2\bigr),
\end{aligned}\eeq
which along with \eqref{E 47}  ensures
\beno\begin{aligned}
E_2(t)+\|\na\dot{\vv u}\|_{L^2_t(L^2)}^2+\|a\|_{L^2_t(L^6)}^2&\leq E_2(0)+C\mathcal{C}^{2+\f1{\gamma}}\e^2\delta^{3-3\al-\f{\al}{\gamma}} \quad\mbox{for any}\ \ t\leq T^\star.
\end{aligned}\eeno
Due to the assumption that $\al\leq\f{2\gamma}{1+2\gamma}\leq \f{4}{3(1+\f{1}{\gamma})}$, \eqref{S2eq10} and \eqref{initial ansatz}, there exists
$C_2>1$ such that
\beq\label{E 49}\begin{aligned}
\delta^{2\al-1}\Bigl\{\sup_{t\in [0,T^\star]}\bigl(
\|(\sqrt\r\dot{\vv u},\div\vv u)(t)\|_{L^2}^2
+\|a(t)\|_{L^6}^2\bigr)+\|\na\dot{\vv u}\|_{L^2_{T^\star}(L^2)}^2+\|a\|_{L^2_{T^\star}(L^6)}^2\Bigr\}\leq C_2\e^2.
\end{aligned}
\eeq

 \no{\Large $\bullet$} {\bf  The continuity argument.}

 We shall prove that $T^\star=\infty$ as long as $\e$ is small enough. Indeed
by virtue of \eqref{E 44}, \eqref{E 47} and \eqref{E 49}, we deduce from \eqref{estimates for q} that
\beq\label{E 41}\begin{aligned}
\|F\|_{L^2_{T^\star}(L^\infty)}&\leq C\|\r(t)\|_{L^\infty_{T^\star}(L^\infty)}^{\f34}\|\sqrt\r\dot{\vv u}\|_{L^2_{T^\star}(L^2)}^{\f12}\|\na\dot{\vv u}\|_{L^2_{T^\star}(L^2)}^{\f12}\\
&\leq C\mathcal{C}^{\f3{4\gamma}}\mathcal{M}^{\f12}\e\delta^\beta,  \with \beta\eqdefa 1-\f{3\al}{4}\Bigl(1+\f{1}{\gamma}\Bigr).
\end{aligned}\eeq

Due to \eqref{initial ansatz for density}, we have
\beq\label{E 50}
M(\r_0)\eqdefa\|\r_0\|_{L^\infty}+1=\Bigl(\f{\mathcal{C}}{4}\Bigr)^{\f1\gamma}\delta^{-\f{\al}{\gamma}}\geq 2.
\eeq
Thanks to \eqref{E 41} and \eqref{E 50}, we deduce from \eqref{estimates for rho} that
\beq\label{E 42}\begin{aligned}
\sup_{t\in[0,T^\star]}\|\r(t)\|_{L^\infty}
&\leq M(\r_0)\exp\Bigl(\f{1}{2(M(\r_0)^\gamma-1)}\|F\|_{L^2_{T^\star}(L^\infty)}^2\Bigr)\\
&\leq \Bigl(\f{\mathcal{C}}{4}\Bigr)^{\f1\gamma}\delta^{-\f{\al}{\gamma}}\exp\Bigl(C\mathcal{M}\e^2\delta^{2\left(1-\f{3\al}{4}(1+\f{1}{\gamma})\right)+\al}\Bigr).
\end{aligned}\eeq

Due to the assumption that $\al\leq\f{2\gamma}{1+2\gamma}\leq\f{4}{3(1+\f{1}{\gamma})}$, by taking $\e>0$ so small  that
\beno
C\mathcal{M}\e^2\delta^{2[1-\f{3\al}{4}(1+\f{1}{\gamma})]+\al}\leq C\mathcal{M}\e^2\delta^\al\leq\ln 2^{\f{1}{\gamma}},
\eeno
we
deduce from \eqref{E 42} that
\beno
\sup_{t\in[0,T^\star]}\|\r(t)\|_{L^\infty}\leq\Bigl(\f{\mathcal{C}}{2}\Bigr)^{\f1\gamma}\delta^{-\f{\al}{\gamma}},
\eeno
which implies that
\beq\label{E 43}
\sup_{t\in[0,T^\star]}\|a(t)\|_{L^\infty}\leq \mathcal{C}\delta^{-\al}.
\eeq

Let us take  $\mathcal{M}\eqdefa \max\{C_1,C_2\}.$ Then \eqref{E 47}, \eqref{E 49} and \eqref{E 43} contradicts with
the definition of $T^\star$ defined at the beginning of the proof of  Theorem \ref{global existence thm}. This in turn shows that as long as $\e$ is small enough,
 $T^\star=T^\ast=\infty,$ and
there hold  the upper bound of $\rho$ in \eqref{estimate for a} and \eqref{total enery estimate}. The lower bound of $\rho$ in \eqref{estimate for a} will be proved in Lemma \ref{lower bound lemma}. Furthermore,  \eqref{energy in H 2} follows from  \eqref{elliptic estimates},
 \eqref{estimate for a} and \eqref{total enery estimate}.

\smallskip

\no{\bf Step 2. The propagation of regularities for $(\r,\vv u)\in\dot{W}^{1,q}(\R^3)\times\dot{W}^{2,1}_q([0,T]\times\R^3).$}
\smallskip

As a convention in this step, we shall use the notation $C_\delta$ to denote the constant that depends on $\delta^{-1}$ increasingly and $C_{\delta,T}$ to denote the constant that depends on $\delta^{-1},\,T$ increasingly.

\no{\Large $\bullet$} {\bf  $L^q$ estimate of $\na a.$}

Due to $\div\vv u=F+a,$ using Young's inequality and \eqref{estimate for a}, we deduce from \eqref{estimate of na a 1} that
\beno
\f1q\f{d}{dt}\|\na a(t)\|_{L^q}^q+\f\ga2\|\na a\|_{L^q}^q
\leq\gamma\bigl(\|F\|_{L^\infty}+3\|\na\vv u\|_{L^\infty}\bigr)\|\na a\|_{L^q}^q+C_\delta\|\na F\|_{L^q}^q.
\eeno
Applying Gronwall's inequality gives rise to
\beq \label{S2eq1g}
\begin{aligned}
&\|\na a(t)\|_{L^q}^q+\f{q\gamma}{2}\|\na a\|_{L^q_t(L^q)}^q\\
&\leq \bigl(\|\na a_0\|_{L^q}^q+C_\delta\int_0^t\|\na F(t')\|_{L^q}^q\,dt'\bigr)\exp\Bigl(3q\gamma\int_0^t\bigl(\|F(t')\|_{L^\infty}+\|\na\vv u(t')\|_{L^\infty}\bigr)\,dt'\Bigr).
\end{aligned}
\eeq
Yet it follows from \eqref{estimate for a} and Lemma \ref{lemma for elliptic system} that
\begin{align*}
\|\na F(t)\|_{L^q}^q\leq C_\delta\|\dot{\vv u}(t)\|_{L^q}^q\leq C_\delta\|\dot{\vv u}(t)\|_{L^2}^{3-\f{q}2}\|\na \dot{\vv u}(t)\|_{L^2}^{3\left(\f{q}2-1\right)}.
\end{align*}
Notice that for $q\in \bigl(3,\f{10}3\bigr],$ we have $3\left(\f{q}2-1\right)\leq 2,$ so that we deduce from \eqref{total enery estimate} that
\beq \label{S2eq1a}
\int_0^T\bigl(\|\na F(t)\|_{L^q}^q+\|\dot{\vv u}(t)\|_{L^q}^q\bigr)\,dt\leq C_{\delta,T},
\eeq
from which, \eqref{estimates for q} and \eqref{S2eq1g}, we infer
\beq \label{S2eq2a}
\|\na a(t)\|_{L^q}^q\leq C_{\delta,T}\exp\Bigl(3q\gamma\int_0^t\|\na\vv u(t')\|_{L^\infty}\,dt'\Bigr)\quad\mbox{for any} \ \ t\leq T<\infty.
\eeq

\no{\Large $\bullet$} {\bf  The estimates of $\|\na a\|_{L^\infty_T(L^q)}$ and $\|(\p_t\vv u,\na^2\vv u)\|_{L^q([0,T]\times\R^3)}.$}

Observing that for any $b>0,$ $\tau\log_2\bigl(2+\f{b}\tau\bigr)$ is an increasing function of $\tau,$  so that we deduce
from \eqref{total enery estimate}, \eqref{S2eq5} and \eqref{S2eq2a}  that
\beq\label{S2eq6}
\|\na a(t)\|_{L^q}^q\leq C_{\delta,T}\exp\Bigl(\f{Cq}{q-3}\int_0^t\hbar(t')\log_2\Bigl(2+\f{\|\na^2 \vv u(t')\|_{L^q}}{\hbar(t')}\Bigr)\,dt'\Bigr)
\quad\mbox{for any} \ \ t\leq T<\infty,
\eeq
where $\hbar(t)\eqdefa\max\bigl(1,\mathfrak{h}(t)\bigr)$ with  $\mathfrak{h}(t)\eqdefa \|\curl\vv u(t)\|_{L^\infty}+\|\div\vv u(t)\|_{L^\infty}.$

On the other hand, we write the momentum equation of \eqref{CNS} as
\beno
\D\vv u=\r\dot{\vv u}+\na a,
\eeno
from which, \eqref{estimate for a} and \eqref{S2eq6}, we infer
\beq\label{S2eq6a}
\begin{aligned}
\|\na^2\vv u(t)\|_{L^q}\leq &C_\delta\bigl(\|\dot{\vv u}(t)\|_{L^q}+\|\na a(t)\|_{L^q}\bigr)\\
\leq &C_{\delta,T}\Bigl(\|\dot{\vv u}(t)\|_{L^q}+\exp\Bigl(\f{C}{q-3}\int_0^t\hbar(t')\log_2\Bigl(2+\f{\|\na^2 \vv u(t')\|_{L^q}}{\hbar(t')}\Bigr)\,dt'\Bigr)\Bigr).
\end{aligned} \eeq
Therefore, due to $\log_2(a+b)\leq \log_2(2a)+\log_2(2b)$ for $a,b\geq 1$ and $\hbar(t)\geq 1,$ we have
\begin{align*}
\log_2\Bigl(2+\f{\|\na^2 \vv u(t)\|_{L^q}}{\hbar(t)}\Bigr)\leq &\log_2 (2C_{\delta,T})+\log_2(2\|\dot{\vv u}(t)\|_{L^q})\\
&+\underbrace{\f{C}{q-3}\int_0^t\hbar(t')\log_2\Bigl(2+\f{\|\na^2 \vv u(t')\|_{L^q}}{\hbar(t')}\Bigr)\,dt'}_{G(t)},
\end{align*}
which implies
\beno
\f{d}{dt}G(t)\leq \f{C}{q-3}\hbar(t)G(t)+\f{C}{q-3}\hbar(t)\Bigl(\log_2 (2C_{\delta,T})+\log_2(2\|\dot{\vv u}(t)\|_{L^q})\Bigr).
\eeno
Applying Gronwall's inequality to the above inequality, we deduce the bound of $G(t)$ and then
\beq \label{S2eq8}
\begin{aligned}
\log_2\Bigl(2&+\f{\|\na^2 \vv u(t)\|_{L^q}}{\hbar(t)}\Bigr)\leq C_{\delta,T}+\log_2(2\|\dot{\vv u}(t)\|_{L^q})\\
&+\f{C}{q-3}\int_0^t\hbar(t')\Bigl(C_{\delta,T}+\log_2(2\|\dot{\vv u}(t)\|_{L^q})\Bigr)\,dt'\cdot\exp\Bigl(\f{C}{q-3}\int_0^t\hbar(t')\,dt'\Bigr).
\end{aligned} \eeq
Yet it follows from \eqref{estimate for a} and \eqref{elliptic estimates} that for any $q>3$
\begin{align*}
\hbar(t)\leq &1+ \|\curl\vv u(t)\|_{L^\infty}+\|F(t)\|_{L^\infty}+\|a(t)\|_{L^\infty}\\
\leq &C\bigl(1+\|\curl\vv u(t)\|_{L^2}+\|F(t)\|_{L^2}+\|\na\curl\vv u(t)\|_{L^q}+\|\na F(t)\|_{L^q}+\|a(t)\|_{L^\infty}\bigr)\\
\leq &C_\delta\bigl(1+\|\curl\vv u(t)\|_{L^2}+\|F(t)\|_{L^2}+\|\dot{\vv u}(t)\|_{L^q}+\|a(t)\|_{L^\infty}\bigr),
\end{align*}
which together with \eqref{total enery estimate} and \eqref{S2eq1a} ensures that for $q\in (3, 10/3]$
\begin{align*}
\int_0^T\hbar(t)\bigl(1+\|\dot{\vv u}(t)\|_{L^q}\bigr)\,dt\leq C_{\delta,T}\bigl(&1+\|(\curl\vv u, F)\|_{L^\infty_T(L^2)}^2\\
&+\|a\|_{L^\infty_T(L^\infty)}^2
+\|\dot{\vv u}(t)\|_{L^2_T(L^q)}^2\bigr)\leq C_{\delta,T}.
\end{align*}
So that we deduce from \eqref{total enery estimate} and \eqref{S2eq8} that
\beq\label{S2eq8a}
\log_2\Bigl(2+\f{\|\na^2 \vv u(t)\|_{L^q}}{\hbar(t)}\Bigr)\leq C_{\delta,T}+\log_2(2\|\dot{\vv u}(t)\|_{L^q})\quad\mbox{for any} \ \ t\leq T<\infty,
\eeq
which implies
\beno
\|\na^2 \vv u(t)\|_{L^q}\leq C_{\delta,T}\hbar(t)\|\dot{\vv u}(t)\|_{L^q}\quad\mbox{for any} \ \ t\leq T<\infty.
\eeno
Then we deduce from \eqref{S2eq6} and \eqref{S2eq8a} that
\beq\label{S2eq67}
\|\na a\|_{L^\infty_T(L^q)}\leq C_{\delta,T}\exp\Bigl(\f{C}{q-3}\int_0^t\hbar(t')\log_2\Bigl(2+C_{\delta,T}\|\dot{\vv u}(t)\|_{L^q}\Bigr)\,dt'\Bigr)
\leq C_{\delta,T},
\eeq
which together with \eqref{S2eq6a} and \eqref{S2eq1a} implies
\beq \label{S2eq68}
\|\na^2 \vv u\|_{L^q_T(L^q)}\leq C_{\delta,T}\bigl(\|\dot{\vv u}\|_{L^q_T(L^q)}+\|\na a\|_{L^\infty_T(L^q)}\bigr)\leq C_{\delta,T}.
\eeq

Notice that for $r$ satisfying $\f1r=\f1q-\f16,$ one has
\begin{align*}
\|\na \vv u\|_{L^r}\lesssim &\|\na \vv u\|_{L^2}^\theta\|\na^2\vv u\|_{L^q}^{1-\theta}\with \f1r=\f\theta2+(\f1q-\f13)(1-\theta)\\
\lesssim &\bigl(\|\div \vv u\|_{L^2}+\|\curl \vv u\|_{L^2}\bigr)^\theta\|\na^2\vv u\|_{L^q}^{1-\theta},
\end{align*}
from which, \eqref{total enery estimate} and \eqref{S2eq68}, we infer
\beno
\|\na \vv u\|_{L^q_T(L^r)}\leq C_{\delta,T} \bigl(\|\div \vv u\|_{L^\infty_T(L^2)}+\|\curl \vv u\|_{L^\infty_T(L^2)}\bigr)^\theta\|\na^2\vv u\|_{L^q_T(L^q)}^{1-\theta}\leq C_{\delta,T}.
\eeno
Hence we obtain
\begin{align*}
\|\p_t\vv u\|_{L^q_T(L^q)}\leq &\|\dot{\vv u}\|_{L^q_T(L^q)}+\|\vv u\|_{L^\infty_T(L^6)}\|\na\vv u\|_{L^q_T(L^r)}\\
\leq &\|\dot{\vv u}\|_{L^q_T(L^q)}+C\bigl(\|\div \vv u\|_{L^\infty_T(L^2)}+\|\curl \vv u\|_{L^\infty_T(L^2)}\bigr)\|\na\vv u\|_{L^q_T(L^r)}\leq C_{\delta,T},
\end{align*}
which together with \eqref{S2eq68} ensures that for any $T<\infty$
\beq \label{S2eq69}
\|(\p_t\vv u,\,\na^2 \vv u)\|_{L^q_T(L^q)}\leq C_{\delta,T}.
\eeq

Due to \eqref{estimate for a},  we have
$\|\na\r\|_{L^q}\leq C\|\na a\|_{L^q},$
which along with \eqref{S2eq67} and \eqref{S2eq69} implies
\beno
\|\na\r\|_{L^\infty_T(L^q)}+\|(\p_t\vv u,\,\na^2 \vv u)\|_{L^q_T(L^q)}\leq C_{\delta,T},\quad\text{for any}\ T<\infty.
\eeno
This completes the proof of Theorem \ref{global existence thm}.
\end{proof}

\setcounter{equation}{0}
\section{Dynamic behavior  of the density function and the decay of the solutions}

The aim of this section is to investigate the collapse phenomenon of the density function and energy in the bump regime. We shall provide precise estimate for the behavior of the density function both in the short time regime and in the long time regime. Roughly speaking, in the short time regime, the density function decays very fast. While in long time regime, the density function is very close  to $1$. As a consequence, the energy also decays very fast after a short time.

As a convention in this section, we always assume that $(\r,\vv u)$ is the unique strong solution of \eqref{CNS} obtained in Theorem \ref{global existence thm}  and $\e>0$ is sufficiently small which is independent of $\delta$.
To investigate the dynamic behavior of $a$, we  need the lower bound of the density function $\r$. Indeed
by virtue of \eqref{trajectory} and  $\div\vv u=F+a,$ we deduce from $\rho$ equation of \eqref{CNS} that
\beq\label{E 59}
\f{d}{dt}\r(t,X(t;\tau,y))+\bigl((\r^\gamma-1)\r\bigr)(t,X(t;\tau,y))=-(\r F)(t,X(t;\tau,y)),
\eeq
or
\beno
\f{d}{dt}\r(t,X(t;\tau,y))=-\bigl(\r (F+a)\bigr)(t,X(t;\tau,y)).
\eeno
Solving the above ODE gives rise to
\beq\label{expression for rho 1}\begin{aligned}
\r(t,X(t;\tau,y))&=\r(t_1,X(t_1;\tau,y))\exp\Bigl(-\int_{t_1}^t\bigl(a+F\bigr)(s,X(s;\tau,y))ds\Bigr),
\end{aligned}\eeq
for any $t\geq t_1\geq\tau$.

\subsection{The dynamic behavior of $a$ on the time interval $[0,T_0]$ .}  We first study the dynamics of $a$ away from the initial time. The amptitude of $a$ is supposed to collapse very fast when it is away from the initial time. While the energy of $a$ may grow very fast in this short time interval.

\begin{lem}\label{lower bound lemma}
{\sl Under the assumptions of Theorem \ref{global existence thm}, one has
\beq\label{lower bound of rho}
\r(t,x)\geq 2^{-\f{1}{\gamma}},\ \ \forall\ t>0,\,\,\forall\ x\in\R^3.
\eeq
Moreover,  if $\r(\tau,y)\geq m>1$, one has
\beq\label{lower bound of rho 1}
\r(t,X(t;\tau,y))
\geq m\exp\Bigl(-2(m^\gamma-1)(t-\tau)-\f{1}{4(m^\gamma-1)}\|F\|_{L^2([\tau,t];L^\infty)}^2\Bigr),
\ \ \forall\ t\geq\tau\geq 0.
\eeq}
\end{lem}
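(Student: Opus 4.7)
The plan is to prove both statements using the Lagrangian representation of the density \eqref{expression for rho 1} combined with Young's inequality---mirroring the strategy of Lemma \ref{lemma for rho} for the upper bound, but applied to the opposite side. In both cases, the goal is to convert the time-integral $\int_{t_1}^{t} F(s,X(s;\tau,y))\,ds$, which is controlled only in $L^2$ in time, into a linear-in-time term (that one can absorb using the sign of $a$) plus a constant depending on $\|F\|_{L^2_t(L^\infty)}^2$.

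First I would handle \eqref{lower bound of rho 1}, the conditional bound. Fix $y$ with $\r(\tau,y)\ge m>1$ and set $\phi(t)\eqdef \r(t,X(t;\tau,y))$. Let $s^\star\eqdef\sup\{s\in[\tau,t]:\phi(s)\ge m\}$. If $s^\star=t$ then $\phi(t)\ge m$ and \eqref{lower bound of rho 1} holds trivially since the claimed exponential is at most $1$. Otherwise continuity yields $\phi(s^\star)=m$ and $\phi<m$ on $(s^\star,t]$, so $a=\phi^\gamma-1\le m^\gamma-1$ there and $\int_{s^\star}^t a\,ds\le (m^\gamma-1)(t-\tau)$. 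Plugging this into \eqref{expression for rho 1} between $s^\star$ and $t$,
\begin{equation*}
\phi(t)\ge m\exp\Big(-(m^\gamma-1)(t-\tau)-(t-\tau)^{1/2}\|F\|_{L^2([\tau,t];L^\infty)}\Big).
\end{equation*}
Young's inequality with the sharp weight $c=\f{1}{2(m^\gamma-1)}$, namely $(t-\tau)^{1/2}\|F\|\le (m^\gamma-1)(t-\tau)+\f{1}{4(m^\gamma-1)}\|F\|^2$, converts this into exactly the bound \eqref{lower bound of rho 1}.

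For \eqref{lower bound of rho}, I would argue by contradiction. Suppose $\r(t^\star,X(t^\star;0,y))<2^{-1/\gamma}$ at some point. Since $\r_0(y)\ge 1>2^{-1/\gamma}$, continuity supplies $t_1\in[0,t^\star)$ with $\phi(t_1)=2^{-1/\gamma}$ and $\phi<2^{-1/\gamma}$ on $(t_1,t^\star]$; hence $-a>\f12$ on that interval. Inserting this into \eqref{expression for rho 1} gives
\begin{equation*}
\phi(t^\star)\ge 2^{-1/\gamma}\exp\Big(\tfrac{1}{2}(t^\star-t_1)-(t^\star-t_1)^{1/2}\|F\|_{L^2([t_1,t^\star];L^\infty)}\Big),
\end{equation*}
and Young's inequality again balances the two summands in the exponent. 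The key ingredient is the uniform bound $\|F\|_{L^2_{\R_+}(L^\infty)}^2\le C\e^2$, which follows from Lemma \ref{lemma for q} combined with the energy estimates \eqref{total enery estimate}--\eqref{energy in H 2} and the restriction $\al\le\f{2\gamma}{1+2\gamma}$; once $\e$ is sufficiently small this produces a contradiction with the standing assumption $\phi(t^\star)<2^{-1/\gamma}$.

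The main obstacle is the sharpness of the Young's step: unlike Lemma \ref{lemma for rho}, where $M$ may be taken as large as one pleases so that $M^\gamma-1$ dominates $\|F\|$-sized corrections freely, the lower-bound variant with $m=2^{-1/\gamma}$ forces the Young's weight $1-m^\gamma=\f12$ to be of order one, so the smallness of $\|F\|_{L^2_{\R_+}(L^\infty)}$---which itself depends on $\sqrt\r\dot{\vv u}$ and $\na\dot{\vv u}$ through Lemma \ref{lemma for q} and hence on the global smallness built up in Theorem \ref{global existence thm}---is indispensable. A minor technical point is justifying that the supremum defining $t_1$ is attained with $\phi(t_1)=2^{-1/\gamma}$, which is immediate from continuity of the strong solution guaranteed by Theorem \ref{global existence thm}.
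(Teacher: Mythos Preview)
Your argument for \eqref{lower bound of rho 1} is correct and essentially identical to the paper's: you set $s^\star=\sup\{s\in[\tau,t]:\phi(s)\ge m\}$, use $a\le m^\gamma-1$ on $(s^\star,t]$, apply \eqref{expression for rho 1}, and finish with Young's inequality with weight $(m^\gamma-1)$.

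Your argument for \eqref{lower bound of rho}, however, has a genuine gap. Working with the threshold exactly $2^{-1/\gamma}$ and arguing by contradiction, the best you obtain after Young's inequality is
\[
\phi(t^\star)\;\ge\;2^{-1/\gamma}\exp\Bigl(\tfrac14(t^\star-t_1)-\|F\|_{L^2_t(L^\infty)}^2\Bigr)
\quad\text{or at best}\quad
\phi(t^\star)\;\ge\;2^{-1/\gamma}\exp\bigl(-c\|F\|_{L^2_t(L^\infty)}^2\bigr),
\]
and since the exponential factor is $\le 1$, this does \emph{not} contradict $\phi(t^\star)<2^{-1/\gamma}$. The length $t^\star-t_1$ is not bounded below, so smallness of $\|F\|_{L^2_t(L^\infty)}$ alone cannot force the exponent to be nonnegative.

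The paper's fix is simple but essential: choose the barrier threshold strictly above the target, namely $r=(2/3)^{1/\gamma}>2^{-1/\gamma}$. On any interval where $\phi\le r$ one has $-a\ge 1-r^\gamma=\tfrac13$, and the same computation yields
\[
\phi(t)\;\ge\;(2/3)^{1/\gamma}\exp\Bigl(-\tfrac{1}{2(1-r^\gamma)}\|F\|_{L^2_t(L^\infty)}^2\Bigr)
=(2/3)^{1/\gamma}\exp\bigl(-C\e^2\delta^{2\beta}\bigr),
\]
which for $\e$ small is $\ge 2^{-1/\gamma}$ because of the buffer $(2/3)^{1/\gamma}>2^{-1/\gamma}$. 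This barrier-with-gap device is precisely what is missing from your contradiction argument; once you insert an intermediate level, your proof goes through.
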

\begin{proof} We will follow the same line as  the proof of  Lemma \ref{lemma for rho}. For fixed $(\tau,y)\in[0,\infty)\times\R^3$, we assume that
$\r(\tau,y)\ge r$ with $r>0$. By continuity, we further assume that there exists $t_1\in(\tau,T)$ such that
\beno
\r(t,X(t;\tau,y))>r,\,\,\forall\, t\in[\tau,t_1)
\quad\text{and}\quad
\r(t_1,X(t_1;\tau,y))=r.
\eeno
Otherwise, there holds $\r(t,X(t;\tau,y))\geq r$ for any $t\in[\tau,T]$.

Again by continuity and the definition of $t_1$,  there exists $t_2>t_1$ such that
\beno
\r(t,X(t;\tau,y))\leq r,\,\,\forall\, t\in(t_1,t_2),
\eeno
which ensures
\beno
a(s,X(s;\tau,y))=\r^\gamma(s,X(s;\tau,y))-1\leq r^\gamma-1,\,\,\forall\, s\in(t_1,t_2).
\eeno
Then we deduced from \eqref{expression for rho 1} that for any $t\in (t_1,t_2)$
\beq\label{E 38}\begin{aligned}
\r(t,X(t;\tau,y))&=\r(t_1,X(t_1;\tau,y))\exp\Bigl(-\int_{t_1}^t\bigl(a+F\bigr)(s,X(s;\tau,y))ds\Bigr)\\
&> r\exp\Bigl((1-r^\gamma)(t-t_1)-\sqrt{t-t_1}\|F\|_{L^2([t_1,t];L^\infty)}\Bigr).
\end{aligned}\eeq

\noindent$\bullet$ {\it Case 1: $\tau=0$, $r=(\f23)^{\f{1}{\gamma}}\in(0,1)$.}  We deduce from \eqref{E 38} that for any $t\in(t_1,t_2)$,
\beno
\r(t,X(t;0,y))
\geq r\exp\Bigl(\f12(1-r^\gamma)(t-t_1)-\f{1}{2(1-r^\gamma)}\|F\|_{L^2_t(L^\infty)}^2\Bigr),
\eeno
which together with \eqref{E 41} ensures that
\beno
\r(t,X(t;0,y))\geq \left(\f23\right)^{\f{1}{\gamma}}\exp\bigl(-C\e^2\delta^{2\beta}\bigr),\quad\forall\ t\in(t_1,t_2).
\eeno
Taking $\e$ sufficiently small, we have
\beno
\r(t,X(t;0,y))\geq 2^{-\f{1}{\gamma}},\quad\forall \ t\in(t_1,t_2).
\eeno

 In case $t_2<T$, there holds $\r(t_2,X(t_2;0,y))=r=(\f23)^{\f{1}{\gamma}}$. We can repeat the above argument to get \eqref{lower bound of rho}.

\smallskip

\noindent$\bullet$ {\it Case 2: $1<r=m\leq\r(\tau,y)$.} We deduce from \eqref{E 38} that for any $t\in(t_1,t_2)$
\beno\begin{aligned}
\r(t,X(t;\tau,y))
&\geq m\exp\Bigl(-2(m^\gamma-1)(t-t_1)-\f{1}{4(m^\gamma-1)}\|F\|_{L^2([\tau,t];L^\infty)}^2\Bigr).
\end{aligned}\eeno

In case $t_2<T$, there holds $\r(t_2,X(t_2;\tau,y))=m$. We can repeat the above argument to get \eqref{lower bound of rho 1}.

We
thus complete the proof of  Lemma  \ref{lower bound lemma}.
\end{proof}

\begin{rmk}
If we assume $\inf_{x\in\R^3}\r_0(x)\geq\underline{\r}$ with $\underline{\r}\in(0,1)$ instead of $\inf_{x\in\R^3}\r_0(x)\geq 1,$ it follows from the proof of Lemma  \ref{lower bound lemma} that
\beno
\r(t,x)\geq {\underline{\r}}/2,\quad\forall\ t>0, \forall x\in\R^3.
\eeno
\end{rmk}

Along the same line to the  proof of  Lemma \ref{lemma for rho} and Lemma \ref{lower bound lemma}, we also obtain the upper bound of $\r(t,X(t;\tau,y))$.

\begin{lem}\label{upper bound lemma 1}
{\sl Under the assumption of Theorem \ref{global existence thm}, for any $M>1$, if $\r(\tau,y)\leq M$, there holds
\beq\label{upper bound 1}
\r(t,X(t;\tau,y))
\leq M\exp\Bigl(\f{1}{2(M^\gamma-1)}\|F\|_{L^2([\tau,t];L^\infty)}^2\Bigr),
\ \ \forall\ t\geq\tau\geq 0.
\eeq}
\end{lem}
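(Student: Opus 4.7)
The plan is to mirror the argument used in Lemma \ref{lemma for rho}, but starting the flow at the arbitrary time $\tau$ rather than at $t=0$. The key ingredient is the ODE representation \eqref{expression for rho 1} along the characteristics, which, specialized here with initial time $\tau$, reads
$$\r(t,X(t;\tau,y))=\r(t_1,X(t_1;\tau,y))\exp\Bigl(-\int_{t_1}^t(a+F)(s,X(s;\tau,y))\,ds\Bigr)$$
for any $t\ge t_1\ge\tau$. I would fix $y\in\R^3$ and, since $\r(\tau,y)\le M$, use continuity of $\r(\cdot,X(\cdot;\tau,y))$ to define
$$t_1\eqdef\inf\{\,t\ge\tau\,:\,\r(t,X(t;\tau,y))=M\,\}.$$
If no such $t_1$ exists, then $\r(t,X(t;\tau,y))\le M$ for all $t\ge\tau$ and the claimed bound is trivial. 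Otherwise on $[\tau,t_1]$ the bound \eqref{upper bound 1} is clear (the exponential factor is $\ge 1$), so the work is to propagate the bound past $t_1$.

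Next, again by continuity, I would assume there exists $t_2>t_1$ such that $\r(t,X(t;\tau,y))\ge M$ on $[t_1,t_2]$. On this subinterval, $a(s,X(s;\tau,y))=\r^\gamma-1\ge M^\gamma-1$, so plugging into the representation formula with starting time $t_1$ (where $\r(t_1,X(t_1;\tau,y))=M$) gives
$$\r(t,X(t;\tau,y))\le M\exp\Bigl(-(M^\gamma-1)(t-t_1)+\int_{t_1}^t|F(s,X(s;\tau,y))|\,ds\Bigr).$$
Bounding the $F$-integral by Cauchy--Schwarz by $\sqrt{t-t_1}\,\|F\|_{L^2([\tau,t];L^\infty)}$ and applying Young's inequality
$$\sqrt{t-t_1}\,\|F\|_{L^2([\tau,t];L^\infty)}\le\tfrac12(M^\gamma-1)(t-t_1)+\tfrac1{2(M^\gamma-1)}\|F\|_{L^2([\tau,t];L^\infty)}^2$$
yields
$$\r(t,X(t;\tau,y))\le M\exp\Bigl(-\tfrac12(M^\gamma-1)(t-t_1)+\tfrac1{2(M^\gamma-1)}\|F\|_{L^2([\tau,t];L^\infty)}^2\Bigr),$$
which is stronger than \eqref{upper bound 1} on $[t_1,t_2]$.

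Finally, if $t_2$ is finite and less than the final time of interest, then $\r(t_2,X(t_2;\tau,y))=M$ by the definition of $t_2$ and continuity, and the same argument can be restarted from $t_2$ in place of $t_1$ to extend the estimate. Iterating this continuation (or, equivalently, invoking a standard open/closed argument on the set $\{t\ge\tau:\eqref{upper bound 1}\text{ holds on }[\tau,t]\}$) yields \eqref{upper bound 1} for all $t\ge\tau$ and every $y\in\R^3$, proving the lemma. No step should present a real obstacle since this is a direct adaptation of Lemma \ref{lemma for rho}; the only mild care needed is to make sure the Young's inequality is applied so that the absorbing term $-\tfrac12(M^\gamma-1)(t-t_1)$ survives (producing an exponential $\le 1$ factor, which is what allows the bound to be propagated cleanly across successive excursions above $M$).
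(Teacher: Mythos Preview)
Your proposal is correct and follows essentially the same route as the paper, which simply states that the proof proceeds ``along the same line'' as Lemma~\ref{lemma for rho} (and Lemma~\ref{lower bound lemma}). Your continuity/continuation argument, the use of \eqref{expression for rho 1}, and the Cauchy--Schwarz plus Young splitting producing the $\frac{1}{2(M^\gamma-1)}\|F\|_{L^2([\tau,t];L^\infty)}^2$ term are exactly the steps used there, just with the starting time shifted from $0$ to $\tau$.
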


 Thanks to Lemma \ref{lower bound lemma}, we shall derive the estimate of $\|a(t)\|_{L^\infty}$ on the time interval  $[0,T_0]$ with $T_0\in(7\gamma^{-1}\cdot 2^{-18},\,7\gamma^{-1}\cdot 2^{-17})$.

\begin{prop}\label{dynamics prop for a over [0,1]}
{\sl We assume that $\delta\in(0,2^{-\f{15}{\al}})$ and $\|\r_0^\gamma-1\|_{L^\infty}=\delta^{-\al}\in[2^{N_0}, 2^{N_0+1})$ with $N_0\in\N_{\geq 15}$.
Then there exist
\beno
t_j\eqdefa \f{7}{8\gamma}\sum_{j'=1}^j2^{-(N_0+1-j')}\quad\mbox{for}\quad j=1,2,\cdots, N_0-14,
\eeno
such that
\beq\label{decay for a}
\|a(t)\|_{L^\infty}\leq\Bigl(\f14(1+2^{N_0+1-j})^{-1}+\gamma(t-t_{j-1})\Bigr)^{-1},\quad\forall\ t\in[t_{j-1},t_j],
\eeq
and
\beq\label{decay for a a}
\|a(t)\|_{L^\infty}\leq 4\times10^4,\quad\forall\ t\geq T_0,
\eeq
where $T_0\eqdefa t_{N_0-14}=\f{7}{8\gamma}(2^{-14}-2^{-N_0})\in(7\gamma^{-1}\cdot2^{-18},\,7\gamma^{-1}\cdot 2^{-17})$.
Moreover, there holds
\beq\label{L 1 Lip of a in [0,1]}
\int_0^{T_0}\|a(t)\|_{L^\infty}dt\leq3\gamma^{-1}\ln\|\r_0^\gamma-1\|_{L^\infty}\leq 3\gamma^{-1}\al\ln\delta^{-1}.
\eeq}
\end{prop}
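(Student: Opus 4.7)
The plan is to integrate the density evolution along Lagrangian trajectories and compare the resulting Riccati-type ODE with the autonomous toy model $\dot f+\gamma f^2=0,$ whose explicit solution $(f(0)^{-1}+\gamma t)^{-1}$ is exactly the profile appearing in \eqref{decay for a}. Using \eqref{E 37}, $\div\vv u=F+a$ and $a=\r^\gamma-1,$ one finds along any trajectory $X(t;0,y)$ that $\phi(t)\eqdefa a(t,X(t;0,y))$ satisfies
\[
\dot\phi=-\gamma(\phi+1)(\phi+F).
\]
The lower bound \eqref{lower bound of rho} forces $\phi\geq-1/2,$ so the target estimate is trivial on trajectories where $\phi\leq 0;$ thus I focus on $\phi>0$ and introduce $\psi\eqdefa 1/\phi,$ which satisfies
\[
\dot\psi=\gamma(1+\psi)(1+F\psi)=\gamma+\gamma\psi(1+F)+\gamma F\psi^2.
\]
This is exactly the toy model $\dot\psi=\gamma$ perturbed by terms that will be absorbed by the small quantity $\|F\|_{L^2_t(L^\infty)}\leq C\e\delta^\beta$ from \eqref{E 41}, with $\beta=1-\tfrac{3\alpha}{4}(1+\gamma^{-1})>0.$

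I would prove \eqref{decay for a} by induction on $j,$ setting $k_j\eqdefa 2^{N_0+1-j},$ $b_j\eqdefa 1/[4(1+k_j)],$ and $\Phi_j(t)\eqdefa (b_j+\gamma(t-t_{j-1}))^{-1}.$ The base case $j=1$ holds because $\|a_0\|_{L^\infty}=\delta^{-\alpha}<2^{N_0+1}\leq 4(1+k_1)=\Phi_1(0).$ The key algebraic input for the inductive step is that the endpoint value $\Phi_{j-1}(t_{j-1})$ is strictly smaller than $\Phi_j(t_{j-1})$ by an $O(k_j^{-1})$ margin, directly computable from $k_{j-1}=2k_j$ and $t_{j-1}-t_{j-2}=7/(8\gamma k_{j-1}),$ and this propagates to a margin $\psi(t_{j-1})-b_j\geq c/k_j$ for some universal constant $c>0.$ Integrating the $\psi$-equation on $[t_{j-1},t]$ and dropping the non-negative contribution $\gamma\int\psi\,ds,$ the remainder is controlled by
\[
\gamma\psi_{\max}(1+\psi_{\max})\sqrt{t_j-t_{j-1}}\,\|F\|_{L^2_t(L^\infty)}.
\]
A crude bootstrap $\psi_{\max}=O(1/k_j)$ yields a perturbation of size $O(\e\delta^\beta\gamma^{-1/2}k_j^{-3/2}),$ which is dwarfed by the margin $c/k_j$ since $k_j\geq 2^{15}$ and $\e\delta^\beta$ can be made arbitrarily small. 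Hence $\psi(t)\geq b_j+\gamma(t-t_{j-1}),$ i.e.\ $\phi(t)\leq\Phi_j(t),$ on all of $[t_{j-1},t_j],$ closing the induction.

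For the post-$T_0$ bound \eqref{decay for a a}, evaluating \eqref{decay for a} at $t=T_0$ with $j=N_0-14$ gives $\|a(T_0)\|_{L^\infty}\leq 8\cdot 2^{15}(1+2^{15})/(9\cdot 2^{15}+7)<4\times 10^4;$ on $[T_0,\infty)$ the inequality $\dot\phi+\gamma\phi\leq -\gamma\phi^2+\gamma(\phi+1)|F|$ together with the global smallness of $\|F\|_{L^2_t(L^\infty)}$ prevents $\phi$ from exceeding this threshold via a standard Gr\"onwall estimate. Finally, \eqref{L 1 Lip of a in [0,1]} follows by direct integration of the toy-model profile:
\[
\int_{t_{j-1}}^{t_j}\Phi_j(t)\,dt=\frac{1}{\gamma}\ln\Bigl(1+\frac{7(1+k_j)}{2k_j}\Bigr)\leq\frac{\ln 6}{\gamma},
\]
and summing over $j=1,\dots,N_0-14$ with $N_0\leq\alpha\log_2\delta^{-1}$ yields the bound $3\gamma^{-1}\alpha\ln\delta^{-1}.$ The principal obstacle is propagating the algebraic margin through $\sim\log_2\delta^{-\alpha}$ dyadic steps: the perturbation from $F$ at each step must remain strictly smaller than the $O(1/k_j)$ slack between consecutive toy-model profiles, and the $\delta^\beta$-smallness of $F$ provided by Theorem \ref{global existence thm} is precisely what allows this quantitative bookkeeping to close.
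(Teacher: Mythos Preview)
Your overall strategy---integrate along characteristics, compare with the Riccati toy model $\dot f+\gamma f^2=0$, and induct on the dyadic levels $k_j=2^{N_0+1-j}$---is exactly the spirit of the paper's proof, and your computations for the margin $\psi(t_{j-1})-b_j\gtrsim 1/k_j$ and for the final integral \eqref{L 1 Lip of a in [0,1]} are correct. But the step where you invoke ``a crude bootstrap $\psi_{\max}=O(1/k_j)$'' is a genuine gap, and it is precisely the place where the paper does something more.

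The variable $\psi=1/\phi=1/a$ is singular at $a=0$, so $\psi_{\max}=O(1/k_j)$ is equivalent to a \emph{lower} bound $\phi(t)\gtrsim k_j$ on the interval $[t_{j-1},t_j]$. Your induction hypothesis only furnishes the \emph{upper} bound $\phi(t_{j-1})\leq\Phi_{j-1}(t_{j-1})$, and there is no mechanism in your argument that prevents $\phi$ from being small---indeed, on a trajectory starting from a point $y$ with $a(t_{j-1},y)=1$ (such points exist), you have $\psi(t_{j-1})=1$, not $O(1/k_j)$, and your perturbation estimate blows up. More seriously, if $\phi$ crosses zero along the trajectory, $\psi$ is not even defined. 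So the bootstrap cannot close without auxiliary input.

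The paper avoids both issues simultaneously. First, it works with the non-singular variable $\r^{-\gamma}$ rather than $1/a$; since $\r\geq 2^{-1/\gamma}$ globally, $\r^{-\gamma}$ stays bounded. Second, and this is the crucial missing ingredient, at each time $t_{j-1}$ the paper decomposes $\R^3$ into dyadic shells $\{m_k\leq\r(t_{j-1},y)\leq M_k\}$ with $m_k=(1+2^k)^{1/\gamma}$. On each high shell ($k\geq 13$), Lemma \ref{lower bound lemma} gives the quantitative lower bound $\r^{-\gamma}(t,X)\leq g_{m_k,t_{j-1}}(t)$ needed to control the perturbation terms when integrating the ODE for $\r^{-\gamma}$; this is exactly the justification your bootstrap lacks. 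On the low shell $\{\r(t_{j-1},y)\leq M_{12}\}$, the upper bound Lemma \ref{upper bound lemma 1} with $M=M_{12}$ directly gives $\r^\gamma(t,X)\leq 4(1+2^{12})\leq\Phi_j(t)$, disposing of those trajectories without any Riccati analysis. Your sketch can be repaired by inserting these two ingredients, but as written it does not close.
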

\begin{proof} We  divide the proof  into the following steps.

\no{\bf Step 1. The upper bound estimate of $\r(t,X(t;\tau,y))$ for $(\tau,y)$ satisfying }
\beno
M\geq\r(\tau,y)\geq m>1.
\eeno
In this case, we define for $t\geq\tau\geq 0$ that
\beno
g_{m,\tau}(t)\eqdefa m^{-\gamma}\exp\Bigl( 2\gamma(m^\gamma-1)(t-\tau)+ \f{\gamma}{4(m^\gamma-1)} \|F\|_{L^2([\tau,t];L^\infty)}^2\Bigr).
\eeno
Then it follows from \eqref{lower bound of rho 1} that
\beq\label{E 60}
\r^{-\gamma}(t,X(t;\tau,y))\leq g_{m,\tau}(t),
\ \ \forall\ t\geq\tau\geq 0.
\eeq
Thanks to  \eqref{E 60}, we deduce from \eqref{E 59} that
\beno
\f{d}{dt}\r(t,X(t;\tau,y))+(1-g_{m,\tau}(t))\r^{\gamma+1}(t,X(t;\tau,y))\leq -(\r F)(t,X(t;\tau,y)),
\eeno
which along with \eqref{E 60} implies
\beq\label{E 61}
-\f{1}{\gamma}\f{d}{dt}\r^{-\gamma}(t,X(t;\tau,y))+(1-g_{m,\tau}(t))\leq-(\r^{-\gamma}F)(t,X(t;\tau,y))\leq g_{m,\tau}(t)\|F(t,\cdot)\|_{L^\infty}.
\eeq
By integrating \eqref{E 61} over $[\tau,t]$, we find
\beq\label{E 61a}
\r^{-\gamma}(t,X(t;\tau,y))\geq\r^{-\gamma}(\tau,y)+ \gamma(t-\tau)-\gamma\int_\tau^tg_{m,\tau}(s)ds-\gamma\|g_{m,\tau}\|_{L^2([\tau,t])}\|F\|_{L^2([\tau,t];L^\infty)}.
\eeq
Yet it follows from   \eqref{E 41} that
\beno\begin{aligned}
\int_\tau^tg_{m,\tau}(s)ds&=m^{-\gamma}\exp\Bigl(\f{\gamma}{4(m^\gamma-1)}\|F\|_{L^2([\tau,t];L^\infty)}^2\Bigr)\int_\tau^t\exp\Bigl(2\gamma(m^\gamma-1)(s-\tau)\Bigr)ds\\
&\leq \f{m^{-\gamma}}{2\gamma(m^\gamma-1)}\exp\Bigl(\f{C\gamma\e^2\delta^{2\beta}}{4(m^\gamma-1)}+ 2\gamma(m^\gamma-1)(t-\tau)\Bigr),
\end{aligned}\eeno
and
\beno\begin{aligned}
&\|g_{m,\tau}(s)\|_{L^2([\tau,t])}
\leq \f{m^{-\gamma}}{\sqrt{4\gamma(m^\gamma-1)}}\exp\Bigl(\f{C\gamma\e^2\delta^{2\beta}}{4(m^\gamma-1)}+ 2\gamma(m^\gamma-1)(t-\tau)\Bigr),
\end{aligned}\eeno
so that due to $\r(\tau,y)\leq M$,  we deduce from \eqref{E 61a} that
\beq\label{E 62}\begin{aligned}
\r^{-\gamma}(t,X(t;\tau,y))\geq M^{-\gamma}+ \gamma(t-\tau)-&\Bigl(\f{1}{(m^\gamma-1)m^\gamma}+\f{C\e\delta^{\beta}}{m^{\gamma}\sqrt{m^\gamma-1}}\Bigr)\\
&\times\exp\Bigl(\f{C\e^2\delta^{2\beta}}{4(m^\gamma-1)}+ 2\gamma(m^\gamma-1)(t-\tau)\Bigr).
\end{aligned}\eeq

\no{\bf Step 2. The estimate of $\|a(t)\|_{L^\infty}$ on $[0,t_1]$.}

Let us define
\beno
m_k\eqdefa (1+2^k)^{\f{1}{\gamma}},\quad M_k\eqdefa [2(1+2^k)]^{\f{1}{\gamma}}.
\eeno
Then in view of the assumption that $\|\r_0^\gamma-1\|_{L^\infty}=\delta^{-\al}\in[2^{N_0}, 2^{N_0+1})$, we decompose  $\R^3$ into
\beno
\R^3=\cup_{k=13}^{N_0}\left\{y\in\R^3\,|\,m_k\leq\r_0(y)\leq M_k\right\}\cup\left\{y\in\R^3\,|\,\r_0(y)\leq M_{12}\right\}.
\eeno

For  $y\in \left\{y\in\R^3\,|\,m_k\leq\r_0(y)\leq M_k\right\}$ with $k\in[13,N_0]$, we get,
by  using \eqref{E 62}, that
\beno\begin{aligned}
\r^{-\gamma}(t,X(t;0,y))\geq \f12(1+2^k)^{-1}+\gamma t-&\Bigl(\f{1}{2^k(1+2^k)}+\f{C\e\delta^{\beta}}{2^{\f{k}{2}}(1+2^k)}\Bigr)\\
&\times \exp\Bigl(C\e^2\delta^{2\beta}2^{-k}+\gamma 2^{k+1}t\Bigr).
\end{aligned}\eeno
Taking $\e$ so small that
\beno
\Bigl(\f{1}{2^k(1+2^k)}+\f{C\e\delta^{\beta}}{2^{\f{k}{2}}(1+2^k)}\Bigr)\exp\Bigl(C\e^2\delta^{2\beta}2^{-k}\Bigr)\leq 2^{-\f32k},
\eeno
leads to
\beq\label{H 1}
\r^{-\gamma}(t,X(t;0,y))\geq\f12(1+2^k)^{-1}+\gamma t-2^{-\f32k}\exp\bigl(\gamma 2^{k+1}t\bigr).
\eeq
In particular, for any $t\leq\f7{8\gamma}\cdot2^{-k},$
\beno
2^{-\f32k}\exp\bigl(\gamma 2^{k+1}t\bigr)\leq 2^{-\f32k} e^{\f74}\leq\f18\cdot2^{-k}\leq \f14(1+2^k)^{-1},\quad\text{for any }\ k\geq 13,
\eeno
we obtain
\beno
\r^{-\gamma}(t,X(t;0,y))\geq\f14(1+2^k)^{-1}+\gamma t,
\eeno
which implies
\beq\label{E 63}
\r^{\gamma}(t,X(t;0,y))\leq\Bigl(\f14(1+2^k)^{-1}+\gamma t\Bigr)^{-1},\quad\forall\ t\leq\f7{8\gamma}\cdot2^{-k}.
\eeq

Let
$
t_1\eqdefa\f7{8\gamma}\cdot2^{-N_0}.
$ Then we deduce from
 \eqref{E 63} that for any $y\in\cup_{k=13}^{N_0}\left\{y\in\R^3\,|\qquad\,\right.$ $\left. m_k\leq\r_0(y) \leq M_k\right\}$,
\beq\label{E 64}
\r^{\gamma}(t,X(t;0,y))\leq\Bigl(\f14(1+2^{N_0})^{-1}+\gamma t\Bigr)^{-1},\quad\forall\ t\in[0,t_1].
\eeq

Whereas for any $y\in\left\{y\in\R^3\,|\,\r_0(y)\leq M_{12}\right\}$, we get, by using \eqref{E 41} and \eqref{upper bound 1}, that
\beno
\r^\gamma(t,X(t;0,y))
\leq 2(1+2^{12})\exp\Bigl(\f{\gamma}{2(2^{13}+1)}C\e^2\delta^{2\beta}\Bigr),
\ \ \forall\ t\geq 0.
\eeno
Taking $\e$ so small that for any $15\leq k\leq N_0$
\beq\label{E 64a}
\r^\gamma(t,X(t;0,y))
\leq 4(1+2^{12})\leq\f89\cdot 2^{k}\leq\Bigl(\f14(1+2^{k})^{-1}+\f78\cdot 2^{-k}\Bigr)^{-1},
\ \ \forall\ t\geq 0.
\eeq

Thanks to \eqref{E 64} and \eqref{E 64a}, we conclude that for all $y\in\R^3$,
\beno
\r^\gamma(t,X(t;0,y))\leq\Bigl(\f14(1+2^{N_0})^{-1}+\gamma t\Bigr)^{-1},\quad\forall\ t\in[0,t_1],
\eeno
which yields to
\beq\label{E 65}
\|a(t)\|_{L^\infty}\leq\Bigl(\f14(1+2^{N_0})^{-1}+\gamma t\Bigr)^{-1},\quad\forall\ t\in[0,t_1].
\eeq
Here we used the fact that for any $t\in[0,t_1]$
   $$1\leq\Bigl(\f14(1+2^{N_0})^{-1}+\f78\cdot 2^{-N_0}\Bigr)^{-1}\leq\Bigl(\f14(1+2^{N_0})^{-1}+\gamma t\Bigr)^{-1}.$$
   As a result, it comes out
\beno
\|a(t_1)\|_{L^\infty}\leq\Bigl(\f14(1+2^{N_0})^{-1}+\f78\cdot 2^{-N_0}\Bigr)^{-1}\leq2^{N_0}.
\eeno

\no {\bf Step 3.  The estimate of $\|a(t)\|_{L^\infty}$ on $[t_{j-1},t_j]$, $j=2,3,\cdots, N_0-14$.}

Following the same strategy in Step 2, we take $a(t_1,\cdot)$ as initial data and decompose $\R^3$ into
\beno
\R^3=\cup_{k=13}^{N_0-1}\left\{y\in\R^3\,|\,m_k\leq\r(t_1,y)\leq M_k\right\}\cup\left\{y\in\R^3\,|\,\r(t_1,y)\leq M_{12}\right\}.
\eeno
Along the same line to the  derivations of \eqref{E 63} and
\eqref{E 64a}, there exists $t_2\eqdefa t_1+\f7{8\gamma}\cdot2^{-(N_0-1)}$ such that
\beq\label{E 66}
\|a(t)\|_{L^\infty}\leq\Bigl(\f14(1+2^{N_0-1})^{-1}+\gamma (t-t_1)\Bigr)^{-1},\quad\forall\ t\in[t_1,t_2].
\eeq
 In particular there holds
\beno
\|a(t_2)\|_{L^\infty}\leq\Bigl(\f14(1+2^{N_0-1})^{-1}+\f78\cdot2^{-(N_0-1)}\Bigr)^{-1}\leq 2^{N_0-1}.
\eeno

By
repeating the above arguments, we conclude that there exist $\{t_j\}_{j=1}^{N_0-14}$ such that
$t_j=t_{j-1}+\f7{8\gamma}\cdot2^{-(N_0+1-j)}$ and there holds
 \eqref{decay for a}. We remark that the procedure in Step 2 are repeated $N_0-14$ times according to the relation \eqref{E 64a}.

\no{\bf Step 4.  The estimate of $\|a(t)\|_{L^\infty}$ on $t\geq T_0\eqdefa t_{N_0-14}$.}

We first get, by using the definition of $t_j$, that
\beno
T_0\eqdefa t_{N_0-14}=\f7{8\gamma}\cdot \sum_{j=1}^{N_0-14}2^{-(N_0+1-j)}=\f7{8\gamma}\bigl(2^{-14}-2^{-N_0}\bigr).
\eeno
Then it follows from \eqref{decay for a} that
\beno
\|a(T_0)\|_{L^\infty}\leq\Bigl(\f14(1+2^{14+1})^{-1}+\f78\cdot2^{-(14+1)} \Bigr)^{-1}\leq\f{9}{10}\cdot2^{15},
\eeno
which implies that
\beno
\|\r(T_0)\|_{L^\infty}\leq \bigl(1+\f{9}{10}\cdot2^{15}\bigr)^{\f{1}{\gamma}}.
\eeno
So that thanks to \eqref{upper bound 1} and \eqref{E 41}, we get  for sufficiently small $\varepsilon>0$ and $ t\geq T_0$ that
\beno\begin{aligned}
&\r(t,X(t;T_0,y))
\leq\bigl(1+\f{9}{10}\cdot2^{15}\bigr)^{\f{1}{\gamma}}\exp\Bigl(C2^{-15}\cdot\e^2\delta^{2\beta}\Bigr)<2^{15/\gamma},
\end{aligned}\eeno
which yields to
\beno
\|a(t)\|_{L^\infty}\leq 2^{15}<4\times10^4,\quad \forall\ t\geq T_0.
\eeno
This gives rise to \eqref{decay for a a}.

\no{\bf Step 5. The estimate of  $\|a\|_{L^1_{T_0}(L^\infty)}$.}

Thanks to   \eqref{decay for a}, we get
\beno\begin{aligned}
\int_0^{T_0}\|a(t,\cdot)\|_{L^\infty}dt&\leq\sum_{j=1}^{N_0-14}\int_{t_{j-1}}^{t_j}\Bigl(\f14(1+2^{N_0+1-j})^{-1}+\gamma(t-t_{j-1})\Bigr)^{-1}dt\\
&\leq\gamma^{-1}\sum_{j=1}^{N_0-14}\ln\Bigl(1+4(1+2^{N_0+1-j})\cdot\f78\cdot2^{-(N_0+1-j)}\Bigr)\\
&\leq 2\gamma^{-1}(N_0-14).
\end{aligned}\eeno
Due to $\|\r_0^\gamma-1\|_{L^\infty}=\delta^{-\al}\in[2^{N_0}, 2^{N_0+1})$, we obtain
\beno
\int_0^{T_0}\|a(t,\cdot)\|_{L^\infty}dt\leq\f{2\gamma^{-1}}{\ln 2}\ln\|\r_0^\gamma-1\|_{L^\infty}\leq 3\gamma^{-1}\ln\|\r_0^\gamma-1\|_{L^\infty},
\eeno
which is  \eqref{L 1 Lip of a in [0,1]}. This completes the proof of the proposition.
\end{proof}

\begin{rmk}
It follows from \eqref{E 6} and \eqref{decay for a a} that
\beq\label{dynamic 12}
\|a(t)\|_{L^2}^2\sim\|\r(t)-1\|_{L^2}^2\sim H(\r(t)), \quad\forall \ t\geq T_0.
\eeq
\end{rmk}



\subsection{The dynamic behavior of $a(t,\cdot)$ for $t\geq T_0$ .}
We first improve the estimates of $\|a(t)\|_{L^\infty}$ for $t\geq T_0$ obtained in Proposition \ref{dynamics prop for a over [0,1]}.

\begin{prop}\label{dynamics prop for L infty a}
{\sl Under the assumptions of Proposition \ref{dynamics prop for a over [0,1]}, one has
\beq\label{dynamic 5}
\|a(t)\|_{L^\infty}\leq C\bigl(e^{-\f{\gamma}{2}t}+\e\delta^{1-\f34\al}\bigr),\quad\forall\ t\geq T_0.
\eeq
In particular, there hold
\beq\label{dynamic 6}
\|a(t)\|_{L^\infty}\leq C\e\delta^{1-\f34\al},\quad\forall\ t\geq T_1\eqdefa 2\gamma^{-1}\ln(\e\delta^{1-\f34\al})^{-1},
\eeq and
\beq\label{L 1 Lip of a over long time interval}
\int_{T_0}^T\|a(t)\|_{L^\infty}dt\leq C,\quad\text{for any}\ T_0\leq T\leq CT_2\eqdefa C(\e\delta^{1-\f34\al})^{-2}.
\eeq}
\end{prop}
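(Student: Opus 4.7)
The strategy is to work along the flow $X(\cdot\,;T_0,y)$ associated with $\vv u$, starting from Proposition \ref{dynamics prop for a over [0,1]} which guarantees $\|a(T_0)\|_{L^\infty}\leq C$ and (via Lemma \ref{lower bound lemma}) $\r(t,\cdot)\geq 2^{-1/\gamma}$ for all $t\geq T_0$. Using the $a$-equation and substituting $\div\vv u=F+a$, the composition $b(t)\eqdefa a(t,X(t;T_0,y))$ satisfies the ODE
$$\f{d}{dt}b+\gamma(1+b)\,b=-\gamma(1+b)\,F(t,X(t;T_0,y)).$$
Since $1+b=\r^\gamma$, the bounds above give $\gamma/2\leq\gamma(1+b)\leq C$ on $[T_0,\infty)$, so Duhamel's formula yields
$$|b(t)|\leq Ce^{-\gamma(t-T_0)/2}+C\int_{T_0}^t e^{-\gamma(t-s)/2}|F(s,X(s;T_0,y))|\,ds.$$

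The key input is a uniform-in-$T$ bound on $\|F\|_{L^2([T_0,T];L^\infty)}$. Combining the interpolation $\|F\|_{L^\infty}\lesssim\|\na F\|_{L^2}^{1/2}\|\na F\|_{L^6}^{1/2}$, the elliptic estimate \eqref{elliptic estimates}, Sobolev embedding $\|\dot{\vv u}\|_{L^6}\lesssim\|\na\dot{\vv u}\|_{L^2}$, and the pointwise $\r$-bound $\|\r\|_{L^\infty([T_0,T]\times\R^3)}\leq C$ from \eqref{decay for a a} and \eqref{lower bound of rho}, we obtain $\|F(t)\|_{L^\infty}^2\lesssim\|\sqrt\r\dot{\vv u}(t)\|_{L^2}\|\na\dot{\vv u}(t)\|_{L^2}$. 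Integrating in time and applying Cauchy--Schwarz with the global bounds \eqref{total enery estimate} gives
$$\|F\|_{L^2([T_0,T];L^\infty)}^2\lesssim\|\sqrt\r\dot{\vv u}\|_{L^2_T(L^2)}\|\na\dot{\vv u}\|_{L^2_T(L^2)}\lesssim\e\delta^{(3-\al)/2}\cdot\e\delta^{(1-2\al)/2}=\e^2\delta^{2-3\al/2}$$
uniformly in $T$. Applying Cauchy--Schwarz directly to the Duhamel convolution then delivers \eqref{dynamic 5}, and \eqref{dynamic 6} follows by choosing $t=T_1$ so that $e^{-\gamma t/2}=\e\delta^{1-3\al/4}$.

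The subtle step is \eqref{L 1 Lip of a over long time interval}: naively integrating \eqref{dynamic 5} over $[T_0,CT_2]$ would produce $\e\delta^{1-3\al/4}\cdot T_2\sim(\e\delta^{1-3\al/4})^{-1}$, which is unbounded as $\delta\to 0$. The remedy is to integrate the Duhamel bound in $t$ \emph{before} applying Cauchy--Schwarz and then swap the order of integration via Fubini: since $\int_s^T e^{-\gamma(t-s)/2}\,dt\leq 2/\gamma$, we reduce to
$$\int_{T_0}^T\|a(t)\|_{L^\infty}\,dt\leq C+C\int_{T_0}^T\|F(s)\|_{L^\infty}\,ds\leq C+C\sqrt{T-T_0}\,\|F\|_{L^2([T_0,T];L^\infty)}.$$
For $T\leq CT_2=C(\e\delta^{1-3\al/4})^{-2}$, the combination $\sqrt{T}\cdot\e\delta^{1-3\al/4}$ is uniformly bounded, yielding \eqref{L 1 Lip of a over long time interval}. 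This Fubini trick, which effectively extends the useful time window from $\sqrt{T_2}$ to $T_2$, is the main non-routine step of the proof.
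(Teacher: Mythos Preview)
Your proof is correct and follows essentially the same route as the paper's: working along characteristics from time $T_0$, using the damping $\gamma\r^\gamma\geq\gamma/2$ together with the bound $\|F\|_{L^2([T_0,T];L^\infty)}\lesssim\e\delta^{1-\frac34\al}$ (which the paper obtains from Lemma~\ref{lemma for q}, \eqref{decay for a a} and \eqref{total enery estimate}) to get \eqref{dynamic 5}--\eqref{dynamic 6}, and then integrating the Duhamel bound in $t$ and swapping the order of integration to get \eqref{L 1 Lip of a over long time interval}. The only cosmetic differences are that the paper passes to a differential inequality for $|a|$ rather than solving the linear ODE for $a$ with variable coefficient, and phrases your Fubini step as a Young convolution inequality $\|e^{-\gamma\cdot/2}\ast\|F\|_{L^\infty}\|_{L^1}\leq\|e^{-\gamma\cdot/2}\|_{L^1}\|F\|_{L^1_t(L^\infty)}$.
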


\begin{proof}
Recalling the $a$-equation \eqref{E 11} and using the fact that $\div\vv u=F+a$, we write
\beno
\p_ta+\vv u\cdot\na a+ \gamma\r^\gamma a=-\gamma\r^\gamma F,
\eeno
which implies
\beno
\p_t|a|+\vv u\cdot\na |a|+ \gamma \r^\gamma|a|=-\gamma\r^\gamma F\cdot\f{a}{|a|}.
\eeno
Thanks to \eqref{lower bound of rho} and \eqref{decay for a a}, we get
\beq\label{E 69}
\p_t|a|+\vv u\cdot\na |a|+\f{\gamma}{2}|a|\leq C\|F(t)\|_{L^\infty},\ \forall\ t\geq T_0.
\eeq
Then along the trajectory $X(t;T_0,y)$  defined in \eqref{trajectory}, we write
\beno
\f{d}{dt}|a(t,X(t;T_0,y))|+\f{\gamma}{2}|a(t,X(t;T_0,y))|\leq C\|F(t)\|_{L^\infty},
\eeno
from which and \eqref{decay for a a}, we infer
\beq\label{E 70}\begin{aligned}
|a(t,X(t;T_0,y))|&\leq e^{-\f{\gamma}{2}(t-T_0)}|a(T_0,y)|+C\int_{T_0}^te^{-\f{\gamma}{2}(t-\tau)}\|F(\tau)\|_{L^\infty}d\tau\\
&\leq C\bigl(e^{-\f{\gamma}{2}(t-T_0)}+\|F\|_{L^2((T_0,t);L^\infty)}\bigr).
\end{aligned}\eeq
Yet it follows from \eqref{estimates for q}, \eqref{decay for a a} and \eqref{total enery estimate} that
\beq\label{E 70a}
\|F\|_{L^2((T_0,t);L^\infty)}\lesssim\|\r\|_{L^\infty_t(L^\infty)}^{\f34}\|\sqrt\r\dot{\vv u}\|_{L^2_t(L^2)}^{\f12}\|\na\dot{\vv u}\|_{L^2_t(L^2)}^{\f12}\lesssim\e\delta^{1-\f34\al},\quad\forall\ t\geq T_0.
\eeq
By substituting  \eqref{E 70a} into \eqref{E 70} and using
 the fact that $T_0=O(1),$  we obtain \eqref{dynamic 5}.

Moreover, we deduce from  \eqref{E 70a} that for any $T>T_0$,
\begin{align*}
\int_{T_0}^T\int_{T_0}^te^{-\f{\gamma}{2}(t-\tau)}\|F(\tau)\|_{L^\infty}d\tau \,dt\leq &\|e^{-\f{\gamma}{2}(t-T_0)}\|_{L^1((T_0,T))}\|F\|_{L^1((T_0,T);L^\infty)}\\
\leq & C\sqrt{T-T_0}\cdot\e\delta^{1-\f34\al},
\end{align*}
from which, \eqref{decay for a a} and \eqref{E 70}, we deduce that
\beno\begin{aligned}
\int_{T_0}^T\|a(t)\|_{L^\infty}dt&\leq C\int_{T_0}^Te^{-\f{\gamma}{2}(t-T_0)}dt+C\int_{T_0}^T\int_{T_0}^te^{-\f{\gamma}{2}(t-\tau)}\|F(\tau)\|_{L^\infty}d\tau dt\\
&\leq C+C\e\delta^{1-\f34\al}\sqrt{T-T_0},
\end{aligned}\eeno
which leads to \eqref{L 1 Lip of a over long time interval}. This completes the proof of  Proposition \ref{dynamics prop for L infty a}.
\end{proof}

Thanks to \eqref{decay for a a}, we are going to improve the estimates of $\|a(t)\|_{L^2}$ and $\|a(t)\|_{L^6}$.

\begin{prop}\label{dynamics prop for L 6 a}
 {\sl Under the assumptions of Theorem \ref{global dynamics thm}, we have
 \begin{subequations} \label{S3eq567}
\begin{gather}
\|a(t)\|_{L^6}^2\lesssim\e^2\bigl(\delta^{1-2\al}\cdot e^{-\f{t}{6}}+\delta^{3-\al}\bigr),\quad\forall\ t\geq T_0;\label{dynamic 7}
\\
 \|a(t)\|_{L^6}^2\lesssim\e^2\delta^{3-\al},\quad\forall\ t\geq 6(2+\al)\ln\delta^{-1};\label{dynamic 8}
\end{gather}
\end{subequations}
and
\begin{subequations} \label{S3eq568}
\begin{gather}
 \|a(t)\|_{L^2}^2\lesssim\e^{\f65}\delta^{3-2\al}\cdot e^{-\f{t}{2}}+\e^2\delta^{3-\al},\quad\forall\ t\geq 0;\label{dynamic 9}
\\
\|a(t)\|_{L^2}^2\lesssim\e^{\f65}\delta^{3-\al},\quad\forall\ t\geq2\al\ln\delta^{-1}.\label{dynamic 10}
\end{gather}
\end{subequations}
}
\end{prop}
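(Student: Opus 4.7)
The plan is to prove the four estimates in two batches. The $L^6$ bounds \eqref{dynamic 7}--\eqref{dynamic 8} come directly from a Gronwall argument on the differential inequality of Proposition~\ref{prop for a in L 6} once we restrict to $t\ge T_0$, where Proposition~\ref{dynamics prop for a over [0,1]} yields $\|a(t)\|_{L^\infty}\le 4\times 10^4$ and Lemma~\ref{lemma for rho} yields a universal upper bound for $\|\r(t)\|_{L^\infty}$. The $L^2$ bounds \eqref{dynamic 9}--\eqref{dynamic 10} then follow by combining a direct damping ODE for $\|a\|_{L^2}^2$ with an interpolation argument used both to bound the initial value at $T_0$ and to cover the short interval $[0,T_0]$.

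\medskip

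\textbf{$L^6$ step.} On $[T_0,\infty)$ the prefactor $(1+\|a\|_{L^\infty}^2)\|\r\|_{L^\infty}$ in \eqref{estimates for a in L 6} is bounded by a universal constant, so
\[
6\frac{d}{dt}\|a(t)\|_{L^6}^2+\|a(t)\|_{L^6}^2\le C\|\sqrt\r\dot{\vv u}(t)\|_{L^2}^2.
\]
Multiplying by $e^{t/6}$ and integrating yields
\[
\|a(t)\|_{L^6}^2\le e^{-(t-T_0)/6}\|a(T_0)\|_{L^6}^2+C\int_{T_0}^t e^{-(t-s)/6}\|\sqrt\r\dot{\vv u}(s)\|_{L^2}^2\,ds,
\]
and \eqref{total enery estimate} supplies both $\|a(T_0)\|_{L^6}^2\lesssim\e^2\delta^{1-2\al}$ and $\|\sqrt\r\dot{\vv u}\|_{L^2(\R_+;L^2)}^2\lesssim\e^2\delta^{3-\al}$; this proves \eqref{dynamic 7}. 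Estimate \eqref{dynamic 8} follows by requiring $e^{-t/6}\delta^{1-2\al}\le\delta^{3-\al}$, i.e., $t\ge 6(2+\al)\ln\delta^{-1}$.

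\medskip

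\textbf{$L^2$ step.} Multiplying \eqref{E 11} by $a$, integrating, and using $\div\vv u=F+a$ produces
\[
\frac{d}{dt}\|a\|_{L^2}^2+2\gamma\|a\|_{L^2}^2+(2\gamma-1)\int_{\R^3}a^3\,dx=-(2\gamma-1)\int_{\R^3}a^2F\,dx-2\gamma\int_{\R^3}aF\,dx.
\]
For $t\ge T_0$, the uniform $L^\infty$ bound on $a$, Young's inequality, and absorption of small terms into the damping yield an inequality of the form $\frac{d}{dt}\|a\|_{L^2}^2+\tfrac{1}{2}\|a\|_{L^2}^2\le C\|F\|_{L^2}^2$. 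Applying Gronwall with $\|F\|_{L^\infty(\R_+;L^2)}^2\lesssim\e^2\delta^{3-\al}$ from \eqref{total enery estimate} gives $\|a(t)\|_{L^2}^2\lesssim e^{-(t-T_0)/2}\|a(T_0)\|_{L^2}^2+\e^2\delta^{3-\al}$. The crucial value $\|a(T_0)\|_{L^2}^2$, as well as $\|a(t)\|_{L^2}^2$ on $[0,T_0]$, are then controlled by the Lebesgue interpolation
\[
\|a\|_{L^2}^2\le C\|a\|_{L^1}^{4/5}\|a\|_{L^6}^{6/5},
\]
fed by the uniform-in-time $L^6$ bound $\|a(t)\|_{L^6}^2\lesssim\e^2\delta^{1-2\al}$ from \eqref{total enery estimate} and a propagated $L^1$ bound $\|a(t)\|_{L^1}\lesssim\delta^{3-\al}$ stemming from \eqref{initial ansatz in L 1}; a direct computation gives $\|a(t)\|_{L^2}^2\lesssim\e^{6/5}\delta^{3-2\al}$. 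Since $T_0$ is a universal constant, $e^{-t/2}$ is bounded below on $[0,T_0]$ and $e^{T_0/2}$ is absorbed into the implicit constant, so \eqref{dynamic 9} holds for all $t\ge 0$. Finally, \eqref{dynamic 10} follows by requiring $e^{-t/2}\le\delta^{\al}$, i.e., $t\ge 2\al\ln\delta^{-1}$.

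\medskip

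\textbf{Main obstacle.} Two technical points are delicate. First, securing the effective damping rate $\tfrac{1}{2}$ on $[T_0,\infty)$ requires absorbing the cubic term $(2\gamma-1)\int a^3$, whose natural bound $(2\gamma-1)\|a\|_{L^\infty}\|a\|_{L^2}^2$ erodes the $2\gamma\|a\|_{L^2}^2$ damping unless $\|a\|_{L^\infty}$ is small; one handles this by using a weaker exponent on $[T_0,T_1]$ (whose length is $O(\al\ln\delta^{-1})$) and the sharp smallness $\|a\|_{L^\infty}\lesssim\e\delta^{1-3\al/4}$ from \eqref{dynamic 6} on $[T_1,\infty)$. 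Second, and more fundamentally, propagating the $L^1$ bound $\|a(t)\|_{L^1}\lesssim\delta^{3-\al}$ is not automatic: mass conservation preserves only the signed $\int(\r-1)\,dx$ (exploiting $\r_0\ge 1$), while $H(\r)$ is only quadratic in $\r-1$. The route I would follow is to write $\|\r-1\|_{L^1}=2\int(\r-1)^-\,dx+\int(\r_0-1)\,dx$, use the lower bound $\r\ge 2^{-1/\gamma}$ from Lemma~\ref{lower bound lemma} to confine $(\r-1)^-$ to $\r\in[2^{-1/\gamma},1]$ where $(1-\r)^2\sim h(\r)$, and translate back to $a$ via $|a|\le\gamma\|\r\|_{L^\infty}^{\gamma-1}|\r-1|$; this is cleanest for $t\ge T_0$ where $\|\r\|_{L^\infty}$ is universally bounded, and on $[0,T_0]$ one combines the initial ansatz \eqref{initial ansatz in L 1} with the collapse estimate \eqref{dynamic for a in L infty}.
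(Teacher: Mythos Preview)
Your $L^6$ argument matches the paper's exactly.

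For the $L^2$ part, however, you are working much harder than necessary and your proposed route has a real gap. The paper's key observation, which you miss, is purely algebraic: since $a=\r^\gamma-1\ge -1$, the coefficient in the cubic term satisfies
\[
\gamma+\bigl(\gamma-\tfrac12\bigr)a\;\ge\;\gamma-\bigl(\gamma-\tfrac12\bigr)=\tfrac12
\]
pointwise, for \emph{all} $t\ge 0$, with no smallness of $\|a\|_{L^\infty}$ required. This immediately gives the clean inequality
\[
\tfrac12\tfrac{d}{dt}\|a\|_{L^2}^2+\tfrac12\|a\|_{L^2}^2\le \gamma\|F\|_{L^2}\|a\|_{L^2}+(\gamma-\tfrac12)\|F\|_{L^\infty}\|a\|_{L^2}^2,
\]
and after Young's inequality one applies Gronwall on $[0,t]$ using the global bound $\|F\|_{L^2_t(L^\infty)}^2\lesssim\e^2\delta^{2\beta}$ from \eqref{E 41}. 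Since Gronwall starts at $t=0$, only the \emph{initial} $L^1$ bound is needed, and this is immediate: $\|a_0\|_{L^1}\le(\gamma-1)H(\r_0)+\gamma\|\r_0-1\|_{L^1}\lesssim\delta^{3-\al}$, then $\|a_0\|_{L^2}\le\|a_0\|_{L^1}^{2/5}\|a_0\|_{L^6}^{3/5}\lesssim\e^{3/5}\delta^{3/2-\al}$. No propagation of $\|a(t)\|_{L^1}$ is ever required, and the two obstacles you flag simply evaporate.

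By contrast, your workaround for $L^1$ propagation does not close. You correctly note that mass conservation reduces the question to bounding $\int(\r-1)^-\,dx$, and that $h(\r)\sim(\r-1)^2$ on $\{\r<1\}$ thanks to the lower bound $\r\ge 2^{-1/\gamma}$. But this only yields $\|(\r-1)^-\|_{L^2}^2\lesssim H(\r)$, which does \emph{not} control $\|(\r-1)^-\|_{L^1}$: the set $\{\r<1\}$ may have infinite measure, and $L^1$ cannot be recovered by interpolation from $L^2$ and $L^6$ (or $L^\infty$). So your route, as written, has a genuine gap; the fix is to abandon it in favor of the pointwise sign trick above.
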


\begin{proof}
By virtue of  \eqref{decay for a a}, we deduce from \eqref{estimates for a in L 6} that
\beno
6\f{d}{dt}\|a(t)\|_{L^6}^2+\|a\|_{L^6}^2
\lesssim
(1+\|a\|_{L^\infty}^2)\|\r\|_{L^\infty}\|\sqrt\r\dot{\vv u}\|_{L^2}^2\lesssim\|\sqrt\r\dot{\vv u}\|_{L^2}^2,\quad\forall\ t\geq T_0,
\eeno
which implies
\beno
\|a(t)\|_{L^6}^2\lesssim e^{-\f{t}{6}}\|a(T_0)\|_{L^6}^2+\|\sqrt\r\dot{\vv u}\|_{L^2_t(L^2)}^2,\quad\forall\ t\geq T_0.
\eeno
Thanks to \eqref{total enery estimate}, we obtain
\beq\label{E 71}
\|a(t)\|_{L^6}^2\lesssim\e^2\bigl(\delta^{1-2\al} e^{-\f{t}{6}}+\delta^{3-\al}\bigr),\quad\forall\ t\geq T_0.
\eeq
This yields \eqref{dynamic 7} and \eqref{dynamic 8}.

On the other hand,  to derive the dynamical estimates \eqref{dynamic 9} and \eqref{dynamic 10}, we first observe from \eqref{initial ansatz} and \eqref{initial ansatz in L 1} that
\beno
\|a_0\|_{L^1}\leq(\ga-1)H(\r_0)+\ga\|\r_0-1\|_{L^1}
\leq C\d^{3-\al},
\eeno
which implies
\beq\label{a 0}
\|a_0\|_{L^2}\leq \|a_0\|_{L^1}^{\f25}\|a_0\|_{L^6}^{\f35}\leq C\e^{\f35}\d^{\f32-\al}.
\eeq

Along the same line to the derivation of \eqref{E 33}, we get, by taking $L^2$-inner product of \eqref{E 11} with $a,$ that
\beno
\f12\f{d}{dt}\|a(t)\|_{L^2}^2+\int_{\R^3}\bigl(\gamma+(\gamma-\f12)a\bigr) a^2\, dx=
-\gamma\int_{\R^3}F a\, dx-\bigl(\gamma-\f12\bigr)\int_{\R^3}Fa^2\, dx.
\eeno
Since $\gamma+\bigl(\gamma-\f12\bigr)a=\gamma+\bigl(\gamma-\f12\bigr)\bigl(\r^\gamma-1\bigr)>\f12$, we obtain
\beno
\f12\f{d}{dt}\|a(t)\|_{L^2}^2+\f{1}{2}\|a\|_{L^2}^2
\leq\gamma\|F\|_{L^2}\|a\|_{L^2}+(\gamma-\f12)\|F\|_{L^\infty}\|a\|_{L^2}^2.
\eeno
Using Young's inequality gives rise to
\beq\label{E 75}
\f{d}{dt}\|a(t)\|_{L^2}^2+\f{1}{2}\|a\|_{L^2}^2
\leq 2\gamma^2\|F\|_{L^2}^2+2\gamma^2\|F\|_{L^\infty}^2\|a\|_{L^2}^2.
\eeq
By applying Gronwall's inequality to \eqref{E 75} and using \eqref{E 41}, we find
\beno\begin{aligned}
\|a(t)\|_{L^2}^2&\leq e^{-\f{t}{2}+2\gamma^2\|F\|_{L^2_t(L^\infty)}^2}\Bigl(\|a_0\|_{L^2}^2+2\gamma^2\int_0^te^{\f{\tau}{2}}d\tau \|F\|_{L^\infty_t(L^2)}^2\Bigr)\\
&\leq e^{-\f{t}{2}+C\e^2\delta^{2\beta}}\|a_0\|_{L^2}^2+2\gamma^2e^{C\e^2\delta^{2\beta}}\|F\|_{L^\infty_t(L^2)}^2\\
&\leq Ce^{-\f{t}{2}}\|a_0\|_{L^2}^2+C\|F\|_{L^\infty_t(L^2)}^2,
\end{aligned}\eeno
from which, \eqref{total enery estimate} and \eqref{a 0}, we infer
\beno
\|a(t)\|_{L^2}^2\lesssim \e^{\f65}\delta^{3-2\al}\cdot e^{-\f{t}{2}}+\e^2\delta^{3-\al},
\eeno
which leads to \eqref{dynamic 9} and \eqref{dynamic 10}. This completes the proof of the  proposition.
\end{proof}

\subsection{The proof of Theorem \ref{global dynamics thm}.} In this subsection, we shall   derive the large time
 decay estimates  for the solutions of \eqref{CNS} for $t\geq T_2\eqdefa(\e\delta^{1-\f34\al})^{-2}$. In order to do so,
 we  need the following estimates concerning the low frequency parts  of $\varrho\eqdefa\r-1$ and $\r\vv u$.

\begin{lem}\label{lemma for rho and rho u}
{\sl  For $r>1,$ let
\beno
{\varrho}\eqdefa\r-1,\quad{\varrho}_0\eqdefa\r_0-1\quad\text{and}\quad S_r(t)\eqdefa\left\{\xi\in\R^3\,|\, |\xi|\leq r\w{t}^{-\f12}\right\},
\eeno
 Then if ${\varrho}_0,\,\r_0\vv u_0\in L^1(\R^3),$ one has
\beq\label{estimate for rho and rho u over low frequencies}
\begin{aligned}
\int_{S_r(t)}&\bigl(\gamma|\widehat{{\varrho}}(t,\xi)|^2+|\widehat{\r\vv u}(t,\xi)|^2\bigr)\,d\xi
\lesssim  r^3\w{t}^{-\f32}\bigl(\gamma\|{\varrho}_0\|_{L^1}^2+\|\r_0\vv u_0\|_{L^1}^2\bigr)\\
&+r^5\w{t}^{-\f52}\|{\varrho}\|_{L^2_t(L^2)}^2\|\vv u\|_{L^\infty_t(L^2)}^2+r^4\w{t}^{-2}\int_0^t\bigl(\|\sqrt\r\vv u\|_{L^2}^2+H(\r)\bigr)\|{\varrho}\|_{L^2}\|\vv u\|_{L^2}\,d\tau\\
&+r^3\w{t}^{-\f32}\int_0^t\bigl(\|\sqrt\r\vv u\|_{L^2}^4+H^2(\r)\bigr)\,d\tau.
\end{aligned}\eeq}
\end{lem}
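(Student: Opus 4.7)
The plan is to perform a Fourier-side energy estimate for the coupled density-momentum system: I would extract a weighted pointwise-in-$\xi$ differential inequality by absorbing the nonlinear forcing with weight $|\xi|^{-2}$ against the dissipation, and then integrate over the low-frequency ball $S_r(t)$. First, I would rewrite the momentum equation of \eqref{CNS} using $\vv u=\r\vv u-\varrho\vv u$, so that $-\D\vv u=-\D(\r\vv u)+\D(\varrho\vv u)$, and splitting $\na a=\g\na\varrho+\na(a-\g\varrho)$ to isolate the linear part of the pressure, which yields
\beno
\pa_t(\r\vv u)-\D(\r\vv u)+\g\na\varrho = \D(\varrho\vv u)-\dive(\r\vv u\otimes\vv u)-\na(a-\g\varrho).
\eeno
Setting $\hat m\eqdef\widehat{\r\vv u}$ and taking the Fourier transform, together with $\pa_t\varrho+\dive(\r\vv u)=0$, I obtain
\beno
\pa_t\widehat\varrho+i\xi\cdot\hat m=0,\qquad \pa_t\hat m+|\xi|^2\hat m+i\g\xi\,\widehat\varrho=N(\tau,\xi),
\eeno
with $N\eqdef -|\xi|^2\widehat{\varrho\vv u}-i\xi\cdot\widehat{\r\vv u\otimes\vv u}-i\xi\,\widehat{(a-\g\varrho)}$. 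Multiplying by $\g\overline{\widehat\varrho}$ and $\overline{\hat m}$ respectively, adding and taking real parts, the two cross linear terms cancel (since $\xi\widehat\varrho\cdot\overline{\hat m}=\overline{\xi\cdot\hat m\,\overline{\widehat\varrho}}$), yielding the Fourier energy identity
\beno
\pa_t Y(t,\xi)^2+2|\xi|^2|\hat m|^2=2\Re(\overline{\hat m}\cdot N),\qquad Y^2\eqdef \g|\widehat\varrho|^2+|\hat m|^2.
\eeno

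Next, I would apply the weighted Young's inequality $2|\hat m||N|\leq |\xi|^2|\hat m|^2+|N|^2/|\xi|^2$ and absorb the first piece into the dissipation; time-integration (dropping the remaining nonnegative dissipation) gives the key pointwise bound
\beno
Y(t,\xi)^2\leq Y(0,\xi)^2+\int_0^t\f{|N(\tau,\xi)|^2}{|\xi|^2}\,d\tau.
\eeno
The ratio $|N|/|\xi|$ is controlled pointwise because every piece of $N$ carries at least one factor of $i\xi$. Using $\|\widehat f\|_{L^\infty_\xi}\leq\|f\|_{L^1_x}$ together with $\|\varrho\vv u\|_{L^1}\leq\|\varrho\|_{L^2}\|\vv u\|_{L^2}$, $\|\r\vv u\otimes\vv u\|_{L^1}=\|\sqrt\r\vv u\|_{L^2}^2$, and $\|a-\g\varrho\|_{L^1}\leq CH(\r)$ (via the identity $a-\g\varrho=(\g-1)h(\r)$ for $\g>1$; the term vanishes identically when $\g=1$), I obtain
\beno
\f{|N(\tau,\xi)|}{|\xi|}\leq |\xi|A(\tau)+B(\tau),\quad A\eqdef\|\varrho\|_{L^2}\|\vv u\|_{L^2},\,\,B\eqdef\|\sqrt\r\vv u\|_{L^2}^2+CH(\r),
\eeno
and $Y(0,\xi)^2\leq \g\|\varrho_0\|_{L^1}^2+\|\r_0\vv u_0\|_{L^1}^2$ in the same way.

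Finally I would integrate over $S_r(t)$. The initial-data contribution produces the first piece of \eqref{estimate for rho and rho u over low frequencies} from the volume bound $|S_r(t)|\leq Cr^3\w{t}^{-3/2}$. Squaring the forcing bound and expanding gives
\beno
\f{|N|^2}{|\xi|^2}\leq |\xi|^2A^2+2|\xi|AB+B^2,
\eeno
and applying the elementary moment formulas $\int_{S_r(t)}|\xi|^k\,d\xi\sim r^{3+k}\w{t}^{-(3+k)/2}$ for $k=0,1,2$, together with the pointwise bound $\int_0^tA^2\,d\tau\leq\|\vv u\|_{L^\infty_t(L^2)}^2\|\varrho\|_{L^2_t(L^2)}^2$ and $B^2\leq 2\|\sqrt\r\vv u\|_{L^2}^4+CH^2(\r)$, produces exactly the three remaining terms of the statement. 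The main technical point is the choice of the $|\xi|^{-2}$ weight in Young's inequality: it is precisely this weight that both regularizes $|N|$ at the origin (exploiting the built-in $|\xi|$ and $|\xi|^2$ vanishings of the forcing) and, after squaring, generates the three moment powers $|\xi|^2,|\xi|,|\xi|^0$ that match the prefactors $r^5\w{t}^{-5/2}$, $r^4\w{t}^{-2}$, $r^3\w{t}^{-3/2}$ in the conclusion; any naive energy estimate without this weight would produce excess powers of $r$.
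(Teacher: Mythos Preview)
Your proof is correct and reaches exactly the estimate \eqref{estimate for rho and rho u over low frequencies}, but the route differs from the paper's in one structural choice. The paper leaves $-\D\vv u$ untouched, so its Fourier energy identity carries the dissipation $|\xi|^2|\widehat{\vv u}|^2$; the nonlinear terms are then each paired with $\overline{\widehat{\r\vv u}}$, and the paper repeatedly splits $\widehat{\r\vv u}=\widehat{\vv u}+\widehat{\varrho\vv u}$, absorbing the $\widehat{\vv u}$-part into the dissipation via Young's inequality and estimating the $\widehat{\varrho\vv u}$-part directly (the three terms $\mathfrak{B}_1,\mathfrak{B}_2,\mathfrak{B}_3$). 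You instead rewrite $-\D\vv u=-\D(\r\vv u)+\D(\varrho\vv u)$, which shifts the dissipation to $|\xi|^2|\hat m|^2$ and pushes $\D(\varrho\vv u)$ into the forcing $N$; then a single weighted Young's inequality $2|\hat m||N|\le|\xi|^2|\hat m|^2+|\xi|^{-2}|N|^2$ handles everything at once and yields a clean pointwise-in-$\xi$ bound before integrating over $S_r(t)$. Your approach is somewhat more economical (no repeated splitting, and the $|\xi|^{-2}$ weight is exactly calibrated to the built-in $|\xi|$-vanishing of each piece of $N$), while the paper's keeps the physically natural dissipation and makes the absorption mechanism more explicit term by term; both give the same three moment powers $r^5\w{t}^{-5/2}$, $r^4\w{t}^{-2}$, $r^3\w{t}^{-3/2}$.
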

\begin{proof}
We first get,  by taking Fourier transform to the transport equation of \eqref{CNS}, that
\beno
\p_t\widehat{{\varrho}}(t,\xi)+i\xi\cdot\widehat{\r\vv u}(t,\xi)=0,
\eeno
from which, we infer
\beq\label{H 9a}
\f12\f{d}{dt}|\widehat{{\varrho}}(t,\xi)|^2-\im\left(\bigl(\xi\cdot\widehat{\r\vv u}\bigr)\overline{\widehat{{\varrho}}}(t,\xi)\right)=0.
\eeq

While we get,  by using the definition of $h(\r)$ in \eqref{def of H rho}, that
\beno
a=(\gamma-1)h(\r)+\gamma{\varrho},
\eeno
from which and the $\vv u$-equation of \eqref{CNS}, we write
\beno
\p_t(\r\vv u)+\div(\r\vv u\otimes\vv u)-\D\vv u+(\gamma-1)\na h(\r)+\gamma\na{\varrho}=\vv 0.
\eeno
By taking Fourier transform to the above equation and then multiplying the resulting equation
by $\overline{\widehat{\r\vv u}},$ and finally taking the real part, we obtain
\beq\label{H 9}\begin{aligned}
&\f12\f{d}{dt}|\widehat{\r\vv u}(t,\xi)|^2+|\xi|^2\Re\bigl(\widehat{\vv u}\cdot\overline{\widehat{\r\vv u}}\bigr)-\gamma \im\bigl(\bigl(\xi\cdot\overline{\widehat{\r\vv u}}\bigr)\cdot\widehat{{\varrho}}\bigr)\\
&=\im\bigl(\widehat{\r\vv u\otimes\vv u}:\overline{\widehat{\r\vv u}}\otimes\xi\bigr)+(\gamma-1)\im\bigl(\widehat{h(\r)}\bigl(\xi\cdot\overline{\widehat{\r\vv u}}\bigr)\bigr).
\end{aligned}\eeq
Due to $\r\vv u=\vv u+{\varrho}\vv u$, we write
\beno
|\xi|^2\Re\bigl(\widehat{\vv u}\cdot\overline{\widehat{\r\vv u}}\bigr)=|\xi|^2|\widehat{\vv u}|^2+|\xi|^2\Re\bigl(\widehat{\vv u}\cdot\overline{\widehat{{\varrho}\vv u}}\bigr),
\eeno
so that we deduce from   \eqref{H 9a} and \eqref{H 9}  that
\beno\begin{aligned}
&\gamma|\widehat{{\varrho}}(t,\xi)|^2+|\widehat{\r\vv u}(t,\xi)|^2
+2\int_0^t|\xi|^2|\widehat{\vv u}|^2\,d\tau=\gamma|\widehat{{\varrho}}(0,\xi)|^2+|\widehat{\r_0\vv u_0}(\xi)|^2
\\
&-2\int_0^t
|\xi|^2\Re\bigl(\widehat{\vv u}\cdot\overline{\widehat{{\varrho}\vv u}}\bigr)
+2\int_0^t\im\bigl(\widehat{\r\vv u\otimes\vv u}:\overline{\widehat{\r\vv u}}\otimes\xi\bigr)\,d\tau+2(\gamma-1)\int_0^t\im\bigl(\widehat{h(\r)}\bigl(\xi\cdot\overline{\widehat{\r\vv u}}\bigr)\bigr)\,d\tau.
\end{aligned}\eeno
Integrating the above equality over $S_r(t)$ gives rise to
\beq\label{H 10}\begin{aligned}
&\int_{S_r(t)}\Bigl(\gamma|\widehat{{\varrho}}(t,\xi)|^2+|\widehat{\r\vv u}(t,\xi)|^2\Bigr)d\xi
+2\int_{S_r(t)}\int_0^t\bigl|\xi|^2|\widehat{\vv u}\bigr|^2d\tau d\xi\\
&\quad=\int_{S_r(t)}\Bigl(\gamma|\widehat{{\varrho}_0}(\xi)|^2+|\widehat{\r_0\vv u_0}(\xi)|^2\Bigr)d\xi+\mathfrak{B}_1+\mathfrak{B}_2+\mathfrak{B}_3,
\end{aligned}\eeq
where
\beno\begin{aligned}
&\mathfrak{B}_1\eqdefa-2\int_{S_r(t)}\int_0^t|\xi|^2\Re\bigl(\widehat{\vv u}\cdot\overline{\widehat{{\varrho}\vv u}}\bigr)\,d\tau\, d\xi,\quad \mathfrak{B}_2\eqdefa2\int_{S_r(t)}\int_0^t
\im\bigl(\widehat{\r\vv u\otimes\vv u}:\overline{\widehat{\r\vv u}}\otimes\xi\bigr)\, d\tau \,d\xi,\\
&\mathfrak{B}_3\eqdefa2(\gamma-1)\int_{S_r(t)}\int_0^t\im\bigl(\widehat{h(\r)}\bigl(\xi\cdot\overline{\widehat{\r\vv u}}\bigr)\bigr)\,d\tau \,d\xi.
\end{aligned}\eeno

Next we estimate the terms on r.h.s. of \eqref{H 10} term by term.

\no{\Large $\bullet$} {\bf The  Estimate of $\mathfrak{B}_1$.}

 Recalling that $S_r(t)\eqdefa\{\xi\in\R^3\,|\, |\xi|\leq r\w{t}^{-\f12}\}$,
we get, by applying H\"older's and Young's inequalities, that
\beno\begin{aligned}
|\mathfrak{B}_1|
&\leq\f{1}{2}\int_{S_r(t)}\int_0^t|\xi|^2|\widehat{\vv u}|^2d\tau d\xi
+2\int_{S_r(t)}\int_0^t|\xi|^2|\widehat{{\varrho}\vv u}|^2d\tau d\xi\\
&\leq\f{1}{2}\int_{S_r(t)}\int_0^t|\xi|^2|\widehat{\vv u}|^2d\tau d\xi
+2\int_{S_r(t)}|\xi|^2d\xi\int_0^t\|\widehat{{\varrho}\vv u}(\tau)\|_{L^\infty_\xi}^2d\tau \\
&\leq\underbrace{\f{1}{2}\int_{S_r(t)}\int_0^t|\xi|^2|\widehat{\vv u}|^2d\tau d\xi}_{\text{absorbed by the l.h.s. of \eqref{H 10}}}+Cr^5\w{t}^{-\f52}\|{\varrho}\|_{L^2_t(L^2)}^2\|\vv u\|_{L^\infty_t(L^2)}^2.
\end{aligned}\eeno

\no{\Large $\bullet$} {\bf The estimate of $\mathfrak{B}_2$.}

Since $\r\vv u=\vv u+{\varrho}\vv u$, we have
\beno\begin{aligned}
|\mathfrak{B}_2|
&=2\Bigl|\int_{S_r(t)}\int_0^t \im\bigl(\widehat{\r\vv u\otimes\vv u}:(\overline{\widehat{\vv u}}+\overline{\widehat{{\varrho}\vv u}})\otimes\xi\bigr)\, d\tau d\xi\Bigr|\\
&
\leq\f{1}{2}\int_{S_r(t)}\int_0^t|\xi|^2|\widehat{\vv u}|^2d\tau d\xi+2\int_{S_r(t)}d\xi\int_0^t\|\widehat{\r\vv u\otimes\vv u}(\tau)\|_{L^\infty_\xi}^2 d\tau\\
&\quad+2\int_{S_r(t)}|\xi|d\xi\int_0^t\|\widehat{\r\vv u\otimes\vv u}(\tau)\|_{L^\infty_\xi}\|\widehat{{\varrho}\vv u}(\tau)\|_{L^\infty_\xi}\,d\tau\\
&\leq\underbrace{\f{1}{2}\int_{S_r(t)}\int_0^t|\xi|^2|\widehat{\vv u}|^2d\tau d\xi}_{\text{absorbed by the l.h.s. of \eqref{H 10}}}+C r^3\w{t}^{-\f32} \int_0^t\|\sqrt\r\vv u(\tau)\|_{L^2}^4\,d\tau\\
&\quad+C r^4\w{t}^{-2}\int_0^t\|\sqrt\r\vv u(\tau)\|_{L^2}^2\|{\varrho}(\tau)\|_{L^2}\|\vv u(\tau)\|_{L^2}\,d\tau.
\end{aligned}\eeno

\no{\Large $\bullet$} {\bf The estimate of  $\mathfrak{B}_3$.}

Similar to the estimate of $\mathfrak{B}_2$, we have
\beno\begin{aligned}
|\mathfrak{B}_3|&=2(\gamma-1)\Bigl|\int_{S_r(t)}\int_0^t\im\bigl(\widehat{h(\r)}\xi\cdot\bigl(\overline{\widehat{\vv u}}+\overline{\widehat{{\varrho}\vv u}}\bigr)\bigr)\,d\tau d\xi\Bigr|\\
&\leq\f12\int_{S_r(t)}\int_0^t|\xi|^2|\widehat{\vv u}|^2d\tau d\xi+2(\gamma-1)^2\int_{S_r(t)}d\xi\int_0^t\|\widehat{h(\r)}(\tau)\|_{L^\infty_\xi}^2\,d\tau\\
&\quad+2(\gamma-1)\int_{S_r(t)}|\xi|d\xi\cdot\int_0^t\|\widehat{h(\r)}(\tau)\|_{L^\infty_\xi}\|\widehat{{\varrho}\vv u}(\tau)\|_{L^\infty_\xi}\,d\tau\\
&\leq\underbrace{\f12\int_{S_r(t)}\int_0^t|\xi|^2|\widehat{\vv u}|^2d\tau d\xi}_{\text{absorbed by the l.h.s. of \eqref{H 10}}}+C r^3\w{t}^{-\f32}\int_0^tH^2(\r(\tau))\,d\tau\\
&\quad+C r^4\w{t}^{-2}\int_0^tH(\r(\tau))\|{\varrho}(\tau)\|_{L^2}\|\vv u(\tau)\|_{L^2}d\tau,
\end{aligned}\eeno
where we used the fact that
$
\|\widehat{h(\r)}(\tau)\|_{L^\infty_\xi}\leq\|h(\r)(\tau)\|_{L^1}=H(\r(\tau)).
$

Finally it is easy to observe that
\beno\begin{aligned}
&\int_{S_r(t)}\bigl(\gamma|\widehat{{\varrho_0}}(\xi)|^2+|\widehat{\r_0\vv u_0}(\xi)|^2\bigr)\,d\xi\leq C r^3\w{t}^{-\f32}\bigl(\gamma\|\varrho_0\|_{L^1}^2+\|\r_0\vv u_0\|_{L^1}^2\bigr).
\end{aligned}\eeno

By substituting the above estimates into \eqref{H 10},
we achieve \eqref{estimate for rho and rho u over low frequencies}.  This completes the proof of the lemma.
\end{proof}

Now we are in a position to complete  the proof of Theorem \ref{global dynamics thm}.

\begin{proof}[Proof of Theorem \ref{global dynamics thm}]
We
first deduce \eqref{dynamic for a in L infty} and \eqref{dynamic for a in L 1 L infty} from
 Propositions \ref{dynamics prop for a over [0,1]} and \ref{dynamics prop for L infty a}. Whereas \eqref{dynamic for a in L 6}
  follows from \eqref{total enery estimate} and Proposition \ref{dynamics prop for L 6 a}.

It remains to justify the large time   decay estimates \eqref{decay of energy 1} and \eqref{decay of energy 2} for the strong solutions of \eqref{CNS}. In what follows, we divide the proof into the following steps:

\no{\bf Step 1. Energy estimates for $t\geq T_*$.}

 Here $T_*\geq T_0$ will be determined  later on. Along the same line to the derivation of \eqref{E 48}, and
  using the fact that $\|a(t)\|_{L^\infty}\leq C$ for $t\geq T_0$, we find
\beq\label{H 15}
\f{d}{dt}E_2(t)+\|\na\dot{\vv u}\|_{L^2}^2+\|a\|_{L^6}^2\leq C_1\|\sqrt\r\dot{\vv u}\|_{L^2}^2+C_1\e^{\f23}\|\na\vv u\|_{L^2}^2,
\eeq
for $E_2(t)$ being defined by \eqref{S2eq10}.

Let $E_1(t)$ be determined by \eqref{S2eq10}, we denote
\beq\label{defi of E and D}\begin{aligned}
&E(t)\eqdefa E_1(t)+\f{1}{4C_1}E_2(t),\\
&D(t)\eqdefa \f\gamma4\|\na\vv u\|_{L^2}^2+\f14\|\sqrt\r\dot{\vv u}\|_{L^2}^2
+\f{1}{4C_1}\|\na\dot{\vv u}\|_{L^2}^2+\f{1}{4C_1}\|a\|_{L^6}^2.
\end{aligned}\eeq
Then for sufficiently small $\e$, we deduce from  \eqref{E 45} and \eqref{H 15} that
\beq\label{H 17}
\f{d}{dt}E(t)+D(t)\leq 0,\quad\forall t\geq T_0.
\eeq

It is easy to observe from  \eqref{lower bound of rho} and \eqref{decay for a a} that
\beq\label{H 20}
\r(t,x)\sim 1, \quad\forall t\geq T_0,\,\forall\ x\in\R^3.
\eeq
Whereas in view of the definition of $F$ and  \eqref{dynamic 12}, we have for any $t\geq T_0$
\begin{align*}
&\|\div\vv u\|_{L^2}^2
\leq2\|F\|_{L^2}^2+2\|a\|_{L^2}^2
\leq2\|F\|_{L^2}^2+CH(\r) \andf\\
&H(\r)\sim\|a\|_{L^2}^2\leq \|F\|_{L^2}^2+\|\div \vv u\|_{L^2}^2,
\end{align*}
from which and \eqref{defi of E and D}, we infer
\beq\label{H 18}\begin{aligned}
E(t)
\sim& \|(\vv u, \na \vv u, a, F, \dot{\vv u})(t)\|_{L^2}^2+H(\r(t))+\|a(t)\|_{L^6}^2\\
\sim& \|(\vv u, \na\vv u, F, \dot{\vv u})(t)\|_{L^2}^2+\|a(t)\|_{L^6}^2,\quad\forall\ t\geq T_0.
\end{aligned}\eeq

On the other hand, it follows from \eqref{elliptic estimates} that
\beno
\|\na F\|_{L^2}\leq C\|\r\dot{\vv u}\|_{L^2}\leq \|\dot{\vv u}\|_{L^2},\quad\forall\ t\geq T_0,
\eeno
which together  with \eqref{H 20} ensures that
\beq\label{H 19}\begin{aligned}
D(t)& \sim \|(\na\vv u,\dot{\vv u},\na\dot{\vv u})(t)\|_{L^2}^2+\|a(t)\|_{L^6}^2\\
& \gtrsim\|(\na\vv u,\na F,\na\dot{\vv u})(t)\|_{L^2}^2+\|a(t)\|_{L^6}^2.
\end{aligned}\eeq

Recalling that $S_r(t)=\{\xi\in\R^3\,|\,|\xi|\leq r\w{t}^{-\f12}\}$, we have
\beno
\|\na\vv u\|_{L^2}^2 =(2\pi)^{-3} \int_{\R^3}|\xi|^2|\widehat{\vv u}|^2d\xi
\geq r^2\w{t}^{-1}\|\vv u\|_{L^2}^2-(2\pi)^{-3}r^2\w{t}^{-1}\int_{S_r(t)}|\widehat{\vv u}(t,\xi)|^2d\xi,
\eeno
where the constant $r>1$ will be chosen later on.
Hence thanks to \eqref{H 18}, \eqref{H 19} and the fact that
\beno
1\geq r^2\w{t}^{-1},\quad\  \forall\ t\geq r^2>T_0,
\eeno
 there exists a constant $c_0\in(0,\f12)$ so that
for any $t\geq r^2>T_0$,
\beno
D(t)\geq 4c_0r^2\w{t}^{-1}E(t)-4c_0r^2\w{t}^{-1}\int_{S_r(t)}\bigl(|\widehat{\vv u}(t,\xi)|^2+|\widehat{F}(t,\xi)|^2\bigr)\,d\xi.
\eeno
By inserting the above estimate into \eqref{H 17}, we obtain
\beno
\f{d}{dt}E(t)+4c_0r^2\w{t}^{-1}E(t)\leq 4c_0r^2\w{t}^{-1}\int_{S_r(t)}\bigl(|\widehat{\vv u}(t,\xi)|^2+|\widehat{F}(t,\xi)|^2\bigr)\,d\xi.
\eeno
Taking $r=c_0^{-\f12}$ in the above inequality gives rise to
\beq\label{H 21}
\f{d}{dt}E(t)+4\w{t}^{-1}E(t)\leq 4\w{t}^{-1}\int_{S_r(t)}\bigl(|\widehat{\vv u}(t,\xi)|^2+|\widehat{F}(t,\xi)|^2\bigr)\,d\xi,
\eeq  for any $t\geq T_*\eqdefa c_0^{-1}.$
We remark that the interest of \eqref{H 21} is that the term on the left is  a typical
one that creates decay. Thus it needs a careful analysis. In the rest of the proof, we always treat $r=c_0^{-\f12}$ as a given constant.

Notice that
\beno
F=\div\vv u-a\andf a=\r^\gamma-1=(\gamma-1)h(\r)+\gamma{\varrho},
\eeno
we have
\beno\begin{aligned}
\int_{S_{r}(t)}|\widehat{F}(t,\xi)|^2\,d\xi
&\leq 2\int_{S_r(t)}\bigl(|\xi|^2|\widehat{\vv u}|^2+|\widehat{a}|^2\bigr)d\xi\\
&\lesssim\w{t}^{-1}\int_{S_r(t)}|\widehat{\vv u}|^2d\xi
+\int_{S_r(t)}\Bigl((\gamma-1)^2|\widehat{h(\r)}|^2+\gamma^2|\widehat{{\varrho}}|^2\Bigr)d\xi\\
&\lesssim\int_{S_r(t)}|\widehat{\vv u}|^2d\xi+\|\widehat{h(\r)}(t)\|_{L^\infty_\xi}^2\int_{S_r(t)}d\xi+\int_{S_r(t)}|\widehat{{\varrho}}|^2d\xi\\
&\lesssim\int_{S_r(t)}|\widehat{\vv u}|^2d\xi+\w{t}^{-\f32}H^2(\r(t))+\int_{S_r(t)}|\widehat{{\varrho}}|^2d\xi.
\end{aligned}\eeno
Substituting the above inequality into \eqref{H 21} leads to
\beq\label{H 21a}\begin{aligned}
\f{d}{dt}E(t)+4\w{t}^{-1}E(t)\leq & C\w{t}^{-\f52}\cdot H^2(\r(t))+C\w{t}^{-1}\int_{S_r(t)}\bigl(|\widehat{\vv u}(t,\xi)|^2+|\widehat{{\varrho}}(t,\xi)|^2\bigr)d\xi.
\end{aligned}\eeq

Due to $|{\varrho}|=|\r-1|\leq |\r^\gamma-1|=|a|$, we get, by using \eqref{dynamic 9}, that
\beq\label{H 11}
\|{\varrho}(t)\|_{L^2}^2\lesssim \|a(t)\|_{L^2}^2\lesssim\e^{\f65}\delta^{3-2\al}, \quad\forall\ t>0,
\eeq
so that one has
\beno\begin{aligned}
\int_{S_r(t)}|\widehat{\vv u}|^2d\xi&\leq2\int_{S_r(t)}|\widehat{\r\vv u}|^2d\xi+2\int_{S_r(t)}|\widehat{{\varrho}\vv u}|^2d\xi\\
&\leq 2\int_{S_r(t)}|\widehat{\r\vv u}|^2d\xi+C\w{t}^{-\f32}\|{\varrho}\|_{L^2}^2\|\vv u\|_{L^2}^2\\
&\leq 2\int_{S_r(t)}|\widehat{\r\vv u}|^2d\xi+C\e^{\f65}\delta^{3-2\al}\|\vv u\|_{L^2}^2.
\end{aligned}\eeno
Since $\e$ is sufficiently small, the term $C\e^2\delta^{3-2\al}\|\vv u(t)\|_{L^2}^2\w{t}^{-1}$ can be absorbed by the l.h.s. of \eqref{H 21a}.
As a result, it comes out
\beq\label{H 22}\begin{aligned}
\f{d}{dt}E(t)+2\w{t}^{-1}E(t)\leq & C\w{t}^{-\f52}\cdot H^2(\r(t))\\
& +C\w{t}^{-1}\int_{S_r(t)}\bigl(|\widehat{\r\vv u}(t,\xi)|^2+|\widehat{{\varrho}}(t,\xi)|^2\bigr)d\xi,\quad\forall\ t\geq T_*.
\end{aligned}\eeq

\no{\bf Step 2. The rough estimate for the last term on r.h.s of \eqref{H 22}.}

In order to do so, we shall use \eqref{estimate for rho and rho u over low frequencies}.
Due to $|{\varrho}|\leq |\r^\gamma-1|=|a|$, we first get, by using \eqref{dynamic 9}, that
\beq\label{H 12}\begin{aligned}
\|{\varrho}\|_{L^2_t(L^2)}^2\leq\int_0^t\|a\|_{L^2}^2d\tau
&\leq C\int_0^t\bigl(e^{-\f{t}{2}}\cdot\e^{\f65}\delta^{3-2\al}+\e^2\delta^{3-\al}\bigr)d\tau\\
&\leq C\e^{\f65}\delta^{3-2\al}+C\e^2\delta^{3-\al} t
\leq C\e^{\f65}\delta^{3-2\al}\langle t\rangle.
\end{aligned}\eeq
While it follows from \eqref{total enery estimate}, \eqref{lower bound of rho} and \eqref{H 12} that
\beq\label{H 12a}
\|{\varrho}\|_{L^2_t(L^2)}^2\|\vv u\|_{L^\infty_t(L^2)}^2
\lesssim\|{\varrho}\|_{L^2_t(L^2)}^2\|\sqrt\r\vv u\|_{L^\infty_t(L^2)}^2\lesssim \e^{\f{16}{5}}\delta^{6-3\al}\w{t},
\eeq
and
\beno\begin{aligned}
\int_0^t&\bigl(\|\sqrt\r\vv u\|_{L^2}^2+H(\r)\bigr)\|{\varrho}\|_{L^2}\|\vv u\|_{L^2}\,d\tau\\
&\lesssim t^{\f12}\|{\varrho}\|_{L^2_t(L^2)}\sup_{\tau\in(0,t)}\Bigl(\bigl(\|\sqrt\r\vv u\|_{L^2}^2+H(\r)\bigr)\|\sqrt\r\vv u\|_{L^2}\Bigr)\\
&\lesssim t^{\f12}\cdot\w{t}^{\f12}\cdot\e^{\f35}\delta^{\f32-\al}\cdot\Bigl(\e^2\delta^{3-\al}\Bigr)^{\f32}\\
&\lesssim
\e^{\f{18}{5}}\delta^{6-\f52\al}\w{t},
\end{aligned}\eeno
and
\beno\begin{aligned}
&\int_0^t\bigl(\|\sqrt\r\vv u\|_{L^2}^4+H(\r)^2\bigr)d\tau
\leq t\sup_{\tau\in(0,t)}\bigl(\|\sqrt\r\vv u\|_{L^2}^4+H^2(\r)\bigr)\leq Ct\e^4\delta^{6-2\al},
\end{aligned}\eeno
from which and \eqref{initial ansatz in L 1}, we deduce from \eqref{estimate for rho and rho u over low frequencies} that
\beno\begin{aligned}
&\int_{S_r(t)}\bigl(\gamma|\widehat{{\varrho}}(t,\xi)|^2+|\widehat{\r\vv u}(t,\xi)|^2\bigr)\,d\xi\\
&\lesssim \w{t}^{-\f32}\cdot\delta^{2(3-\al)}+\w{t}^{-\f32}\cdot\e^{\f{16}{5}}\delta^{6-3\al}+\w{t}^{-1}
\cdot\e^{\f{18}{5}}\delta^{6-\f52\al}
+\w{t}^{-\f12}\cdot\e^4\delta^{6-2\al}.
\end{aligned}\eeno
Then there holds
\beq\label{H 14}
\int_{S_r(t)}\bigl(\gamma|\widehat{{\varrho}}(t,\xi)|^2+|\widehat{(\r\vv u)}(t,\xi)|^2\bigr)d\xi\lesssim\delta^{6-3\al}\w{t}^{-\f12}.
\eeq

\no{\bf Step 3. The rough large time decay estimates.}

We get, by applying  \eqref{total enery estimate}, \eqref{H 22} and \eqref{H 14}, that
\beno\begin{aligned}
\f{d}{dt}E(t)+2\w{t}^{-1}E(t)
&\leq C\e^4\delta^{6-2\al}\w{t}^{-\f52}+C\delta^{6-3\al}\w{t}^{-\f32}\\
&\leq C\delta^{6-3\al}\w{t}^{-\f32},
\end{aligned}\eeno
from which, we infer
\beno
E(t)\leq \bigl(\w{T_*}^2E(T_*)+C\delta^{6-3\al}\w{t}^{\f32}\bigr)\w{t}^{-2},\quad\forall\ t\geq T_*.
\eeno
Yet it follows from  \eqref{total enery estimate} that
\beq\label{H 25a}
E(T_*)\leq C\e^2\delta^{1-2\al}.
\eeq
As a result, it comes out
\beq\label{H 23}
E(t)\leq C\delta^{1-2\al}\w{t}^{-\f12},\quad\forall\ t\geq T_*.
\eeq

\no{\bf Step 4. The first improved decay estimates for $t\geq T_*$.}

Thanks to \eqref{H 20},  \eqref{H 18}, \eqref{H 11} and \eqref{H 23}, we have
\beno
\|(\vv u, \sqrt\r\vv u, {\varrho})(t)\|_{L^2}^2+H(\r)(t)\lesssim E(t)\lesssim\delta^{1-2\al}\w{t}^{-\f12},\quad\forall\ t\geq T_*,
\eeno
which along with \eqref{total enery estimate} implies that for any $t\geq T_*$
\beno\begin{aligned}
&\int_{T_*}^t\bigl(\|\sqrt\r\vv u\|_{L^2}^2+H(\r)\bigr)\|{\varrho}\|_{L^2}\|\vv u\|_{L^2}d\tau+\int_{T_*}^t\bigl(\|\sqrt\r\vv u\|_{L^2}^4+H^2(\r)\bigr)d\tau\\
&\leq\sup_{t\geq T_*}\bigl(\|\sqrt\r\vv u\|_{L^2}^2+H(\r)\bigr)\cdot
\int_{T_*}^t\bigl(\|{\varrho}\|_{L^2}\|\vv u\|_{L^2}+\|\sqrt\r\vv u\|_{L^2}^2+H(\r)\bigr)d\tau\\
&\lesssim\e^2\delta^{3-\al}\cdot\delta^{1-2\al}\cdot\int_{T_*}^t(1+\tau)^{-\f12}d\tau\lesssim \e^2\delta^{4-3\al}\w{t}^{\f12}.
\end{aligned}\eeno
While we get, by using \eqref{total enery estimate}, \eqref{lower bound of rho} and \eqref{H 11}, that
\beq\label{H 29}\begin{aligned}
\int_0^{T_*}\bigl(\|\sqrt\r\vv u\|_{L^2}^2&+H(\r)\bigr)\|{\varrho}\|_{L^2}\|\vv u\|_{L^2}d\tau+\int_0^{T_*}\bigl(\|\sqrt\r\vv u\|_{L^2}^4+H^2(\r)\bigr)d\tau\\
&\qquad\qquad\lesssim\Bigl(\bigl(\e^2\delta^{3-\al}\bigr)^{\f32}\cdot\e^{\f35}\delta^{\f32-\al}+\bigl(\e^2\delta^{3-\al}\bigr)^2\Bigr) T_*\lesssim\e^{\f{18}{5}}\delta^{6-\f52\al}.
\end{aligned}\eeq
As a consequence, we obtain
\beno
\int_0^t\bigl(\|\sqrt\r\vv u\|_{L^2}^2+H(\r)\bigr)\|{\varrho}\|_{L^2}\|\vv u\|_{L^2}d\tau+\int_0^t\bigl(\|\sqrt\r\vv u\|_{L^2}^4+H(\r)^2\bigr)d\tau\lesssim\e^2\delta^{4-3\al}\w{t}^{\f12}.
\eeno
By substituting the above estimate, \eqref{H 12a}, \eqref{initial ansatz in L 1} into  \eqref{estimate for rho and rho u over low frequencies}, we achieve
\beq\label{H 24a}
\int_{S_r(t)}\bigl(\gamma|\widehat{{\varrho}}(t,\xi)|^2+|\widehat{(\r\vv u)}(t,\xi)|^2\bigr)d\xi\lesssim\delta^{4-3\al}\w{t}^{-1},
\eeq
which along with \eqref{total enery estimate}, \eqref{H 22} and \eqref{H 24a} ensures that
\beno
\f{d}{dt}E(t)+2\w{t}^{-1}E(t)\lesssim\delta^{4-3\al}\w{t}^{-2},
\eeno
from which, \eqref{H 20}, \eqref{H 18} and \eqref{H 25a}, we infer
\beq\label{H 26a}\begin{aligned}
\|(\vv u, \sqrt\r\vv u,{\varrho})(t)\|_{L^2}^2+H(\r(t))
\lesssim E(t)&\lesssim \bigl(\langle T_*\rangle^2E(T_*)+\delta^{4-3\al}t\bigr)\w{t}^{-2}\\
&\lesssim\delta^{1-2\al}\w{t}^{-1},\quad\forall\ t\geq T_*.
\end{aligned}\eeq

\no{\bf Step 5. The second improved decay estimates for $t\geq T_*$.}

Thanks to \eqref{total enery estimate} and \eqref{H 26a}, we have
\beno\begin{aligned}
&\int_{T_*}^t\bigl(\|\sqrt\r\vv u\|_{L^2}^2+H(\r)\bigr)\|{\varrho}\|_{L^2}\|\vv u\|_{L^2}d\tau+\int_{T_*}^t\bigl(\|\sqrt\r\vv u\|_{L^2}^4+H(\r)^2\bigr)d\tau\\
&\lesssim\bigl(\e^2\delta^{3-\al}\bigr)^{\f34}\cdot\bigl(\delta^{1-2\al}\bigr)^{\f54}\cdot\int_{T_*}^t(1+\tau)^{-\f54}d\tau
\lesssim\e^{\f32}\delta^{\f72-\f{13}{4}\al},
\end{aligned}\eeno
which along with \eqref{H 29} implies
\beno
\int_0^t\bigl(\|\sqrt\r\vv u\|_{L^2}^2+H(\r)\bigr)\|{\varrho}\|_{L^2}\|\vv u\|_{L^2}d\tau+\int_0^t\bigl(\|\sqrt\r\vv u\|_{L^2}^4+H(\r)^2\bigr)d\tau\lesssim\e^{\f32}\delta^{\f72-\f{13}{4}\al}.
\eeno
Then by inserting the above estimate, \eqref{H 12a} and  \eqref{initial ansatz in L 1} into \eqref{estimate for rho and rho u over low frequencies},
we  improve the estimate \eqref{H 14} to be
\beno\label{H 24}
\int_{S_r(t)}\bigl(\gamma|\widehat{{\varrho}}(t,\xi)|^2+|\widehat{(\r\vv u)}(t,\xi)|^2\bigr)\,d\xi\lesssim\delta^{\f72-\f{13}{4}\al}\w{t}^{-\f32},
\eeno
from which, \eqref{total enery estimate} and \eqref{H 22}, we deduce that
\beno
\f{d}{dt}E(t)+2\w{t}^{-1}E(t)\lesssim\delta^{\f72-\f{13}{4}\al}\w{t}^{-\f52}.
\eeno
Then, we infer
\beq\label{H 26}
E(t)\leq \bigl(\w{t_0}^2E(t_0)+C\delta^{\f72-\f{13}{4}\al}\w{t}^{\f12}\bigr)\w{t}^{-2},\quad\forall\ t\geq t_0\geq T_*.
\eeq
Taking $t_0=T_*$ and using \eqref{H 25a}, we deduce from \eqref{H 26} that
\beq\label{H 27}
E(t)\leq C\delta^{1-2\al}\w{t}^{-2}+C\delta^{\f72-\f{13}{4}\al}\w{t}^{-\f32},\quad\forall\ t\geq T_*,
\eeq
which together  with \eqref{H 18} ensures \eqref{decay of energy 1}.

\no{\bf Step 6. The improved energy estimates for $t\geq T_*$.}

By integrating \eqref{H 17} over $[T,t]$ and using \eqref{H 27}, we obtain for any $t\geq T\geq T_*$,
\beno
E(t)+\int_{T}^{t}D(\tau)d\tau\leq E(T)\leq C\delta^{1-2\al}\w{T}^{-2}+C\delta^{\f72-\f{13}{4}\al}\w{T}^{-\f32},
\eeno
which together  with \eqref{H 18} and \eqref{H 19} ensures \eqref{decay of energy 2}. This completes the proof of the theorem.
\end{proof}
\begin{rmk}
By taking $T=T_2\eqdefa (\e\delta^{1-\f34\al})^{-2}$ in \eqref{decay of energy 2}, we  obtain
\beq\label{decay of energy 2a}\begin{aligned}
&\sup_{t\geq T_2}\bigl(\|(\vv u,\na\vv u,a,F,\dot{\vv u})(t)\|_{L^2}^2+\|a(t)\|_{L^6}^2\bigr)\\
&\quad+\int_{T_2}^\infty\bigl(\|(\na\vv u,\dot{\vv u},\na\dot{\vv u})(t)\|_{L^2}^2+\|a(t)\|_{L^6}^2\bigr)dt\leq C\e^3\delta^{5(1-\al)}.
\end{aligned}
\eeq
\end{rmk}

\setcounter{equation}{0}

\section{The propagation of regularities and global dynamics.}
In this section, we shall show the precise estimates for the propagation of the high regularities, in particular,
we shall derive the estimates of $\|\na a(t)\|_{L^6}$ and $\|\na\vv u\|_{L^1_t(L^\infty)}$. Then we have the full picture on the global dynamic behavior of the solutions to \eqref{CNS} obtained in Theorem \ref{global existence thm}.

\subsection{The estimates of $\|\na a(t)\|_{L^p}$ with $p=2,6$}

\begin{lem}\label{lemma for na a}
{\sl Let $(\r,\vv u)$ be the unique strong solution of \eqref{CNS} obtained in Theorem \ref{global existence thm}. Then there holds
\beq\label{estimate of na a}\begin{aligned}
\f{d}{dt}\|\na a(t)\|_{L^p}^2+\f\gamma2\|\na a\|_{L^p}^2\leq & C\bigl(1+\|a\|_{L^\infty}^2\bigr)\|\r\dot{\vv u}\|_{L^p}^2\\
&+C\bigl(\|\r\dot{\vv u}\|_{L^2}\|\r\dot{\vv u}\|_{L^6}+\|a\|_{L^6}^2+\|a\|_{L^\infty}\bigr)\|\na a\|_{L^p}^2\\
&+8\bar{c}_*\gamma \ln\bigl(\|\na a\|_{L^6}^2+1\bigr)\cdot\|a\|_{L^\infty}\|\na a\|_{L^p}^2,\quad\text{for}\ p=2,6,\\
 \mbox{where\,\,} \bar{c}_*\eqdefa&\|\mathcal{F}^{-1}\bigl(\f{\xi\otimes\xi}{|\xi|^2}\varphi(\xi)\bigr)\|_{L^1} \andf
 \varphi \ \  \mbox{is given by Definition} \ref{S0def1}.
\end{aligned}\eeq
}
\end{lem}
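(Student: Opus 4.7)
The starting point is Proposition~\ref{prop for na a a}, which gives a differential inequality for $\|\na a\|_{L^p}^p$. Multiplying it through by $\f{2}{p}\|\na a\|_{L^p}^{2-p}$, equivalently using $\f{1}{p}\f{d}{dt}\|\na a\|_{L^p}^p=\f{1}{2}\|\na a\|_{L^p}^{p-2}\f{d}{dt}\|\na a\|_{L^p}^2$, produces a differential inequality for $\|\na a\|_{L^p}^2$. A Young's inequality applied to the cross term $\gamma(1+\|a\|_{L^\infty})\|\na F\|_{L^p}\|\na a\|_{L^p}^{p-1}$ absorbs part of the $2\gamma\|\na a\|_{L^p}^2$ dissipation on the left so as to leave $\f\gamma2\|\na a\|_{L^p}^2$ plus a small reserve, and produces the contribution $C(1+\|a\|_{L^\infty}^2)\|\na F\|_{L^p}^2$. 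The elliptic estimate \eqref{elliptic estimates} then replaces $\|\na F\|_{L^p}^2$ by $\|\r\dot{\vv u}\|_{L^p}^2$, giving the first summand on the right-hand side of \eqref{estimate of na a}. The $2\gamma\|a\|_{L^\infty}\|\na a\|_{L^p}^2$ piece inherited from Proposition~\ref{prop for na a a} is already of the target form, so what remains is to estimate $4\gamma\|\na\vv u\|_{L^\infty}\|\na a\|_{L^p}^2$ by a sharp logarithmic interpolation.

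The heart of the argument is a logarithmic $L^\infty$-bound for $\na\vv u$. From $-\D\vv u=\curl\curl\vv u-\na F-\na a$ I write
\begin{equation*}
\na\vv u=\na(-\D)^{-1}\curl\curl\vv u-\na(-\D)^{-1}\na F-\na(-\D)^{-1}\na a.
\end{equation*}
The first two terms are Riesz transforms of $\curl\vv u$ and $F$; by the Gagliardo--Nirenberg inequality $\|g\|_{L^\infty}\lesssim\|g\|_{L^6}^{1/2}\|\na g\|_{L^6}^{1/2}$ combined with \eqref{elliptic estimates} and $F=\div\vv u-a$ they are bounded by $C\|\r\dot{\vv u}\|_{L^2}^{1/2}\|\r\dot{\vv u}\|_{L^6}^{1/2}+C\|a\|_{L^6}$. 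For the third term $Ta\eqdefa-\na(-\D)^{-1}\na a$, a zeroth-order operator with symbol $\xi\otimes\xi/|\xi|^2$, I decompose dyadically. The low-frequency block $\D_{-1}T$ is bounded $L^6\to L^\infty$ (its Fourier multiplier is bounded and compactly supported), contributing $\lesssim\|a\|_{L^6}$; each middle block $\dD_j T$ acts by convolution with a kernel whose $L^1$-norm is, by scale invariance, exactly $\bar{c}_*$, so
\begin{equation*}
\|\dD_j Ta\|_{L^\infty}\le\bar{c}_*\|a\|_{L^\infty}\quad\text{for every}\ j\ge 0,
\end{equation*}
and the high-frequency tail satisfies $\|\dD_j Ta\|_{L^\infty}\lesssim 2^{-j/2}\|\dD_j\na a\|_{L^6}$ by Bernstein. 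Choosing the cutoff $N$ so that $2^N\sim(\|\na a\|_{L^6}^2+1)^{1/2}$ balances the middle and high-frequency sums and yields
\begin{equation*}
\|\na\vv u\|_{L^\infty}\le 2\bar{c}_*\ln\bigl(\|\na a\|_{L^6}^2+1\bigr)\|a\|_{L^\infty}+C\bigl(\|\r\dot{\vv u}\|_{L^2}^{1/2}\|\r\dot{\vv u}\|_{L^6}^{1/2}+\|a\|_{L^6}\bigr).
\end{equation*}

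Multiplying this estimate by $4\gamma\|\na a\|_{L^p}^2$ yields the logarithmic term with the precise coefficient $8\bar{c}_*\gamma$ of \eqref{estimate of na a}. The remaining lower-order contribution $4\gamma C\bigl(\|\r\dot{\vv u}\|_{L^2}^{1/2}\|\r\dot{\vv u}\|_{L^6}^{1/2}+\|a\|_{L^6}\bigr)\|\na a\|_{L^p}^2$ is then reshaped by Young's inequality into $C\bigl(\|\r\dot{\vv u}\|_{L^2}\|\r\dot{\vv u}\|_{L^6}+\|a\|_{L^6}^2\bigr)\|\na a\|_{L^p}^2$ plus a small multiple of $\|\na a\|_{L^p}^2$, the latter absorbed into the dissipation reserve set aside in the first Young step. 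Combining with the $2\gamma\|a\|_{L^\infty}\|\na a\|_{L^p}^2$ contribution then completes \eqref{estimate of na a}. The main technical obstacle is the precise bookkeeping of constants in the Littlewood--Paley step, which must deliver exactly the factor $2\bar{c}_*$ in front of the logarithm (and hence $8\bar{c}_*\gamma$ in the final bound), because Theorem~\ref{propagation regularity thm} invokes this estimate under the hypothesis $\al\le 1/(24\bar{c}_*+\f74)$ where $\bar{c}_*$ appears explicitly; everything else is routine Young-/Bernstein-type bookkeeping.
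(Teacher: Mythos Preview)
Your plan matches the paper's proof essentially step for step: start from Proposition~\ref{prop for na a a}, pass to an inequality for $\|\na a\|_{L^p}^2$ via Young, split $\na\vv u$ through $-\D\vv u=\curl\curl\vv u-\na F-\na a$, bound the $\curl\vv u$ and $F$ pieces by $C\|\r\dot{\vv u}\|_{L^2}^{1/2}\|\r\dot{\vv u}\|_{L^6}^{1/2}$ via Gagliardo--Nirenberg plus \eqref{elliptic estimates}, and treat $(\na\otimes\na)(-\D)^{-1}a$ by a dyadic decomposition with sharp middle-block constant $\bar{c}_*$.

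There is one slip in your bookkeeping. With the high-frequency tail bounded by $c\,2^{-N/2}\|\na a\|_{L^6}$, your stated choice $2^N\sim(\|\na a\|_{L^6}^2+1)^{1/2}$ gives a tail of size $\sim\|\na a\|_{L^6}^{1/2}$, which is unbounded and cannot be absorbed. The correct scaling is $2^N\sim C_0(\|\na a\|_{L^6}^2+1)$ (equivalently $N\sim\f{1}{\ln 2}\ln(\|\na a\|_{L^6}^2+1)+C$), which makes the tail $O(1)$; after multiplication by $4\gamma\|\na a\|_{L^p}^2$ this $O(1)$ piece is exactly what gets absorbed by the dissipation reserve you set aside. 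With this $N$ the middle sum is
\[
N\bar{c}_*\|a\|_{L^\infty}\le\Bigl(\tfrac{1}{\ln 2}\ln(\|\na a\|_{L^6}^2+1)+C\Bigr)\bar{c}_*\|a\|_{L^\infty}\le 2\bar{c}_*\ln(\|\na a\|_{L^6}^2+1)\|a\|_{L^\infty}+C\|a\|_{L^\infty},
\]
so your asserted leading constant $2\bar{c}_*$ (and hence the final $8\bar{c}_*\gamma$) is correct. The paper carries out precisely this computation, writing $N\sim 2(\ln 2)^{-1}\ln\bigl(16c_*(\|\na a\|_{L^6}+1)\bigr)$ and then using $\ln(16c_*(x+1))\le\ln(32c_*)+\tfrac12\ln(x^2+1)$ to extract the same coefficient.
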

\begin{proof}
We
first get, by applying Young's inequality to \eqref{estimate of na a 1}, that
\beq\label{E 73}\begin{aligned}
\f{d}{dt}\|\na a(t)\|_{L^p}^2+\gamma\|\na a\|_{L^p}^2
\leq&4\gamma\|\na\vv u\|_{L^\infty}\|\na a\|_{L^p}^2+2\gamma\|a\|_{L^\infty}\|\na a\|_{L^p}^2\\
&+4\gamma\bigl(1+\|a\|_{L^\infty}^2\bigr)\|\na F\|_{L^p}^2.
\end{aligned}\eeq
Observing that $-\D\vv u=\curl\curl\vv u-\na F-\na a$, we have
\beno
\|\na\vv u\|_{L^\infty}\leq\|\na(-\D)^{-1}\curl\curl\vv u\|_{L^\infty}+\|(\na\otimes\na)(-\D)^{-1} F\|_{L^\infty}+\|(\na\otimes\na)(-\D)^{-1} a\|_{L^\infty}.
\eeno
By using the interpolation inequality that $\|f\|_{L^\infty}\lesssim\|f\|_{L^6}^{\f12}\|\na f\|_{L^6}^{\f12}\lesssim\|\na f\|_{L^2}^{\f12}\|\na f\|_{L^6}^{\f12}$ and the elliptic estimate \eqref{elliptic estimates}, we get
\beq\label{E 74}\begin{aligned}
\|\na\vv u\|_{L^\infty}
&\leq C\|\na\curl\vv u\|_{L^2}^{\f12}\|\na\curl\vv u\|_{L^6}^{\f12}+C\|\na F\|_{L^2}^{\f12}\|\na F\|_{L^6}^{\f12}+\|(\na\otimes\na)(-\D)^{-1} a\|_{L^\infty}\\
&\leq C\|\r\dot{\vv u}\|_{L^2}^{\f12}\|\r\dot{\vv u}\|_{L^6}^{\f12}+\|(\na\otimes\na)(-\D)^{-1} a\|_{L^\infty}.
\end{aligned}\eeq

To handle the estimate of $\|(\na\otimes\na)(-\D)^{-1} a\|_{L^\infty}$, for any integer $N\in\N,$ we get, by applying Definition \ref{S0def1}
 and Bernstein Lemma (see \cite{bcd}), that
\beno\begin{aligned}
&\|(\na\otimes\na)(-\D)^{-1} a\|_{L^\infty}
\leq\|\D_{-1}(\na\otimes\na)(-\D)^{-1} a\|_{L^\infty}\\
&\qquad+\sum_{j=0}^{N-1}\|\dot{\D}_j(\na\otimes\na)(-\D)^{-1} a\|_{L^\infty}+\sum_{j=N}^\infty\|\dot{\D}_j(\na\otimes\na)(-\D)^{-1} a\|_{L^\infty}\\
&\leq c_*\|\D_{-1} a\|_{L^6}+\sum_{j=0}^{N-1}\|\dot{\D}_j(\na\otimes\na)(-\D)^{-1} a\|_{L^\infty}
+c_*\sum_{j=N}^\infty2^{\f{j}{2}}\|\dot{\D}_ja\|_{L^6}\\
&\leq c_*\|a\|_{L^6}+\bar{c}_*N\|a\|_{L^\infty}
+c_*2^{-\f{N}{2}}\|\na a\|_{L^6}.
\end{aligned}\eeno
Here
$c_*\ge1$ is a universal constant independent of $\gamma,\al$, and  $\bar{c}_*$ is given by \eqref{estimate of na a}. By
taking
\beno
N\sim 2(\ln2)^{-1}\cdot\ln\bigl(16c_*(\|\na a\|_{L^6}+1)\bigr),
\eeno in the above inequality,
we find
\beno
c_*2^{-\f{N}{2}}\|\na a\|_{L^6}\le\f{1}{16}
\eeno
and
\beq\label{E 77}
\|(\na\otimes\na)(-\D)^{-1} a\|_{L^\infty}
\leq c_*\|a\|_{L^6}+4\bar{c}_*\ln\bigl(16c_*(\|\na a\|_{L^6}+1)\bigr)\|a\|_{L^\infty}
+\f{1}{16}.
\eeq

Thanks to \eqref{elliptic estimates}, \eqref{E 73}, \eqref{E 74} and \eqref{E 77}, we deduce that
\beno\begin{aligned}
\f{d}{dt}\|\na a(t)\|_{L^p}^2&+\f34\gamma\|\na a\|_{L^p}^2
\leq C\bigl(1+\|a\|_{L^\infty}^2\bigr)\|\r\dot{\vv u}\|_{L^p}^2+C\bigl(\|\r\dot{\vv u}\|_{L^2}^{\f12}\|\r\dot{\vv u}\|_{L^6}^{\f12}\\
&\qquad+\|a\|_{L^6}+\|a\|_{L^\infty}\bigr)\|\na a\|_{L^p}^2+16\bar{c}_*\gamma\ln\bigl(16c_*(\|\na a\|_{L^6}+1)\bigr)\|a\|_{L^\infty}\cdot\|\na a\|_{L^p}^2.
\end{aligned}\eeno
Since
\beno
16\bar{c}_*\gamma\ln\bigl(16c_*(\|\na a\|_{L^6}+1)\bigr)\leq16\bar{c}_*\gamma\ln\bigl(32c_*\bigr)+8\bar{c}_*\gamma\ln\bigl(\|\na a\|_{L^6}^2+1\bigr),
\eeno
by using Young's inequality, we obtain \eqref{estimate of na a}. It completes the proof of the lemma.
\end{proof}

\subsection{The estimate of $\|\na a(t)\|_{L^6}$ on the time interval $[0, 4T_2]$}

\begin{prop}\label{propagation prop for na a 1}
{\sl Let $\al\in\bigl(0,\f{1}{1+\f{1}{2\gamma}}\bigr]$ and $\delta\in\bigl(0,2^{-\f{15}{\al}}\bigr)$. If
\beq\label{initial ansatz for na a}
\|\r_0-1\|_{L^1}\leq C\delta^{3-\al},\quad\|\na a_0\|_{L^6}^2\leq C\delta^{-(1+2\al)},
\eeq
one has
\beq\label{rough estimate for L 6 na a 2}
\|\na a(t)\|_{L^6}^2\leq \exp\bigl(C\delta^{-(24\bar{c}_*+1)\al}\bigr),\quad\forall\  t\leq 4T_2\eqdefa4(\e\delta^{1-\f34\al})^{-2}.
\eeq
Furthermore, if $\al\leq\min\Bigl\{\f{1+\f{1}{3\gamma}}{2+\f{1}{4\gamma}},\ \f{1}{24\bar{c}_*+\f{7}{4}}\Bigr\}$,  there holds
\beq\label{value of na a between T_2 and 2T_2}
\|\na a(t)\|_{L^6}^2\leq C\e^3\delta^{3-\f72\al},\quad\forall\ t\in[2T_2,4T_2].
\eeq}
\end{prop}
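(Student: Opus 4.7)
The plan is to apply Lemma \ref{lemma for na a} with $p=6$ and reduce the super-linear estimate to a linear Gronwall for $\ln y(t)$, where $y(t)\eqdefa 1+\|\na a(t)\|_{L^6}^2$. Dividing the inequality \eqref{estimate of na a} by $y$ and using $y\geq 1$ produces
\beno
\f{d}{dt}\ln y(t)\leq R(t)+8\bar{c}_*\gamma\,\|a(t)\|_{L^\infty}\ln y(t),
\eeno
where $R(t)$ collects the $A_1$-type source $C(1+\|a\|_{L^\infty}^2)\|\r\dot{\vv u}\|_{L^6}^2$ and the $B_1$-type term $C(\|\r\dot{\vv u}\|_{L^2}\|\r\dot{\vv u}\|_{L^6}+\|a\|_{L^6}^2+\|a\|_{L^\infty})$. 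The classical linear Gronwall then yields
\beno
\ln y(t)\leq\Bigl(\ln y(0)+\int_0^t R(s)\,ds\Bigr)\exp\Bigl(8\bar{c}_*\gamma\int_0^t\|a(s)\|_{L^\infty}\,ds\Bigr).
\eeno

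For the rough estimate \eqref{rough estimate for L 6 na a 2}, I would splice \eqref{L 1 Lip of a in [0,1]} and \eqref{L 1 Lip of a over long time interval} from Theorem \ref{global dynamics thm}:
\beno
\int_0^{4T_2}\|a\|_{L^\infty}\,ds\leq 3\gamma^{-1}\al\ln\delta^{-1}+C,
\eeno
so the exponential factor is bounded by $C\delta^{-24\bar{c}_*\al}$. The assumption \eqref{initial ansatz for na a} gives $\ln y(0)\leq C\al\ln\delta^{-1}$. The integral $\int_0^{4T_2}R(s)\,ds$ is a polynomial in $\delta^{-1}$ of bounded degree: the $(1+\|a\|_{L^\infty}^2)\|\r\dot{\vv u}\|_{L^6}^2$ factor is estimated using $\|\r\|_{L^\infty}\leq C\delta^{-\al/\gamma}$ (cf.\ \eqref{E 44}), $\|a\|_{L^\infty}\leq C\delta^{-\al}$ (cf.\ \eqref{estimate for a}), and $\|\na\dot{\vv u}\|_{L^2_{T_2}(L^2)}^2\leq C\e^2\delta^{1-2\al}$ from \eqref{total enery estimate}; the remaining $B_1$ contributions are handled by Cauchy--Schwarz together with \eqref{total enery estimate} and the $L^1_t(L^\infty)$ bound on $a$. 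Because $\al\ln\delta^{-1}\leq C\delta^{-\al}$ for small $\delta$, the product is dominated by $C\delta^{-(24\bar{c}_*+1)\al}$, which gives \eqref{rough estimate for L 6 na a 2} after exponentiation.

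For the sharp bound \eqref{value of na a between T_2 and 2T_2}, the decay \eqref{dynamic for a in L infty} shows $\|a(t)\|_{L^\infty}\leq C\e\delta^{1-\f34\al}$ once $t\geq T_1\ll T_2$. Combining with the already-established \eqref{rough estimate for L 6 na a 2}, for $t\in[T_2,4T_2]$ the logarithmic coefficient is controlled by
\beno
8\bar{c}_*\gamma\|a(t)\|_{L^\infty}\ln y(t)\leq C\e\delta^{1-(24\bar{c}_*+\f74)\al},
\eeno
which is a small universal constant precisely under the hypothesis $\al\leq\f{1}{24\bar c_*+7/4}$. This allows the logarithmic term to be absorbed into the dissipation $\f\gamma2\|\na a\|_{L^6}^2$; the $B_1$ contribution is absorbed analogously using Propositions \ref{dynamics prop for L infty a}--\ref{dynamics prop for L 6 a}, leaving a genuine damping inequality $\p_t\|\na a\|_{L^6}^2+\f\gamma4\|\na a\|_{L^6}^2\leq A_1(t)$ on $[T_2,4T_2]$. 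I would then integrate this ODE: the homogeneous contribution $e^{-\gamma(t-T_2)/4}\|\na a(T_2)\|_{L^6}^2$ decays superexponentially, since $\gamma T_2/4\sim\e^{-2}\delta^{-2+3\al/2}\gg\delta^{-(24\bar c_*+1)\al}$ under the restriction on $\al$ (the role of the condition involving $1+\f{1}{3\gamma}$ is precisely to balance this exponent). The forced contribution is bounded through $\int_{T_2}^{4T_2}\|\na\dot{\vv u}\|_{L^2}^2\,ds$ by the long-time decay \eqref{decay of energy 2} evaluated at $T=T_2=(\e\delta^{1-3\al/4})^{-2}$, producing a factor of order $\e^3\delta^{13/2-11\al/2}\leq C\e^3\delta^{3-\f72\al}$, which is exactly \eqref{value of na a between T_2 and 2T_2}.

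The main obstacle, and the reason for the delicate range of $\al$, is the nonlinear feedback loop between $\|a\|_{L^\infty}$ and $\ln\|\na a\|_{L^6}^2$: initially $\|a\|_{L^\infty}\sim\delta^{-\al}$ forces $\|\na a\|_{L^6}^2$ to grow like $\exp(\delta^{-C\al})$, and only after the short pulse has collapsed (so $\|a\|_{L^\infty}\lesssim\e\delta^{1-3\al/4}$) does the product $\|a\|_{L^\infty}\ln y$ become small enough to switch on dissipation. The precise arithmetic $1-(24\bar c_*+\f74)\al>0$ is dictated by equating the growth rate $24\bar c_*\al$ coming from $\int\|a\|_{L^\infty}$ with the decay rate $1-\f34\al$ of $\|a\|_{L^\infty}$ after the collapse; similarly, the auxiliary restriction $\al\leq\f{1+\f{1}{3\gamma}}{2+\f{1}{4\gamma}}$ will appear while bounding $R(t)$ so that its integral does not exceed the logarithmic budget accumulated by the dominant exponential factor.
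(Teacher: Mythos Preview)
Your approach to the rough bound \eqref{rough estimate for L 6 na a 2} has a real gap. After you divide \eqref{estimate of na a} by $y=1+\|\na a\|_{L^6}^2$ and bound $A_1(t)/y\leq A_1(t)$, the source $A_1(t)=C(1+\|a\|_{L^\infty}^2)\|\r\dot{\vv u}\|_{L^6}^2$ survives intact in $R(t)$, and its time integral is a genuine negative power of $\delta$: with the bounds you cite one gets
\beno
\int_0^{4T_2}A_1\,ds\lesssim\e^2\delta^{1-2(2+1/\gamma)\al},
\eeno
which for $\al$ near the allowed endpoint $\f{2\gamma}{2\gamma+1}$ (take $\gamma=1$, $\al=\f23$) is of order $\e^2\delta^{-3}$. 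Your Gronwall then yields $\ln y(t)\lesssim\bigl(\ln y(0)+\e^2\delta^{-3}\bigr)\delta^{-24\bar c_*\al}$, far larger than the target $\delta^{-(24\bar c_*+1)\al}$. The sentence invoking $\al\ln\delta^{-1}\leq C\delta^{-\al}$ handles only the logarithmic pieces ($\ln y(0)$ and $\int\|a\|_{L^\infty}$); it cannot tame the polynomial $A_1$-integral.

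The paper resolves this by reversing the order of operations. It first integrates the differential inequality for $\|\na a\|_{L^6}^2$ itself over $[0,t]$, obtaining $\|\na a(t)\|_{L^6}^2\leq G(t)$ where $G(0)$ already contains the integrated source $\int_0^\infty A_1\lesssim\e^2\delta^{1-2(2+1/\gamma)\al}$ as an additive constant. Since $A_1$ is \emph{not} multiplied by $\|\na a\|_{L^6}^2$ in \eqref{estimate of na a}, it enters $G$ additively, and therefore $\ln(G(0)+1)=O(\ln\delta^{-1})$ regardless of how large the polynomial is. The log-ODE for $G$ then has source $g(t)=\|\r\dot{\vv u}\|_{L^2}\|\r\dot{\vv u}\|_{L^6}+\|a\|_{L^6}^2+\|a\|_{L^\infty}$ only (no $A_1$), with $\int_0^{4T_2}g\lesssim\ln\delta^{-1}$, and Gronwall gives the claimed bound. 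In short: integrate first, then take the logarithm.

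Your outline for \eqref{value of na a between T_2 and 2T_2} is essentially on track, though the role of the condition $\al\leq\f{1+1/(3\gamma)}{2+1/(4\gamma)}$ is misidentified. In the paper it is used to guarantee $\int_{T_1}^{4T_2}\|a(t)\|_{L^6}^2\,dt\lesssim 1$ (combining \eqref{dynamic 7} with the length $\sim T_2$ of the interval), so that the Gronwall factor in the damped phase stays $O(1)$; it has nothing to do with balancing the homogeneous decay rate $\gamma T_2/4$ against the rough exponent, which is already ensured by $\al\leq\f{1}{24\bar c_*+7/4}$. The paper also proceeds in two passes ($[T_1,4T_2]$ then $[T_2,4T_2]$) rather than one, but this is a minor structural difference.
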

\begin{proof}
We shall use the dynamical  estimates of $a$ and the decay estimates of the energy to derive the bound of $\|\na a(t)\|_{L^6}$ step by step.

\no {\bf Step 1. Rough estimate of $\|\na a(t)\|_{L^6}$ over time interval $[0,4T_2]$.}

By taking $p=6$ in \eqref{estimate of na a}, we write
\beq\label{H 2}\begin{aligned}
&\f{d}{dt}\|\na a(t)\|_{L^6}^2+\f\gamma2\|\na a\|_{L^6}^2\leq C\bigl(1+\|a\|_{L^\infty}^2\bigr)\|\r\|_{L^\infty}^2\|\na\dot{\vv u}\|_{L^2}^2+f(t)\|\na a\|_{L^6}^2\with\\
&
f(t)\eqdefa C\bigl(\|\r\dot{\vv u}\|_{L^2}\|\r\dot{\vv u}\|_{L^6}+\|a\|_{L^6}^2\bigr)+C\|a\|_{L^\infty}+8\bar{c}_*\gamma\ln\bigl(\|\na a\|_{L^6}^2+1\bigr)\|a\|_{L^\infty}.
\end{aligned}\eeq

Integrating \eqref{H 2} over $[0,t]$ yields to
\beno\begin{aligned}
\|\na a(t)\|_{L^6}^2+\f\gamma2\|\na a\|_{L^2_t(L^6)}^2&\leq\|\na a_0\|_{L^6}^2+C\bigl(1+\|a\|_{L^\infty_t(L^\infty)}^2\bigr)\|\r\|_{L^\infty_t(L^\infty)}^2\|\na\dot{\vv u}\|_{L^2_t(L^2)}^2\\
&\qquad+\int_0^tf(\tau)\|\na a(\tau)\|_{L^6}^2d\tau,
\end{aligned}\eeno
which along with \eqref{estimate for a}, \eqref{total enery estimate} and \eqref{initial ansatz for na a} implies
\beq\label{H 3}
\|\na a(t)\|_{L^6}^2+\f\gamma2\|\na a\|_{L^2_t(L^6)}^2\leq\underbrace{C\delta^{-(1+2\al)}+C\e^2\delta^{1-2\al(2+\f{1}{\gamma})}+\int_0^tf(\tau)\|\na a(\tau)\|_{L^6}^2d\tau}_{G(t)}.
\eeq
It follows from \eqref{H 3} that
\beno
\f{d}{dt}G(t)\leq f(t)G(t),
\eeno
and
\beno
f(t)\leq C\bigl(\underbrace{\|\r\dot{\vv u}\|_{L^2}\|\r\dot{\vv u}\|_{L^6}+\|a\|_{L^6}^2+\|a\|_{L^\infty}}_{g(t)}\bigr)+8\bar{c}_*\gamma\|a\|_{L^\infty}\ln (G(t)+1),
\eeno
which imply
\beq\label{H 4}
\f{d}{dt}\ln (G(t)+1)\leq 8\bar{c}_*\gamma\|a\|_{L^\infty}\ln (G(t)+1)+Cg(t).
\eeq
Applying Gronwall's inequality to \eqref{H 4}, we have
\beq\label{H 5}
\ln (G(t)+1)\leq \exp\Bigl\{8\bar{c}_*\gamma\int_0^t\|a\|_{L^\infty}d\tau\Bigr\}\cdot\Bigl(\ln (G(0)+1)+C\int_0^t g(\tau)d\tau\Bigr).
\eeq

It is easy to observe from
\eqref{L 1 Lip of a in [0,1]} and \eqref{L 1 Lip of a over long time interval} that for any $t\leq 4T_2$,
\beq\label{H 5a}
\int_0^t\|a(\tau)\|_{L^\infty}\,d\tau\leq\int_0^{T_0}\|a(\tau)\|_{L^\infty}\,d\tau+\int_{T_0}^t\|a(\tau)\|_{L^\infty}\,d\tau
\leq3\gamma^{-1}\al\ln\delta^{-1}+C,
\eeq
from which, \eqref{estimate for a} and \eqref{total enery estimate},
we deduce that for any $t\leq4T_2$
\beq\label{H 5b}\begin{aligned}
\int_0^t g(\tau)\,d\tau&\leq \|\r\|_{L^\infty_t(L^\infty)}^{\f32}\|\sqrt\r\dot{\vv u}\|_{L^2_t(L^2)}\|\na\dot{\vv u}\|_{L^2_t(L^2)}+\|a\|_{L^2_t(L^6)}^2+\int_0^t\|a(\tau)\|_{L^\infty}\,d\tau\\
&\leq C\e^2\bigl(\delta^{2-\f{3\al}{2}(1+\f{1}{\gamma})}+\delta^{1-2\al}\bigr)+C\al\ln\delta^{-1}+C,
\end{aligned}\eeq
and
\beno
\exp\Bigl\{8\bar{c}_*\gamma\int_0^t\|a\|_{L^\infty}d\tau\Bigr\}\leq\exp\Bigl\{24\bar{c}_*\al\ln\delta^{-1}+C\gamma\Bigr\}
\leq C\delta^{-24\bar{c}_*\al}.
\eeno
Since $2-\f{3\al}{2}(1+\f{1}{\gamma})>0$ and
\beno
G(0)\leq C\bigl(\delta^{-(1+2\al)}+\e^2\delta^{1-2\al(2+\f{1}{\gamma})}\bigr),
\eeno
 we deduce from \eqref{H 5} that
\beno\begin{aligned}
\ln (G(t)+1)
\leq &C\delta^{-24\bar{c}_*\al}\cdot\Bigl(\ln\bigl(\delta^{-(1+2\al)}+\e^2\delta^{1-2\al(2+\f{1}{\gamma})}\bigr)+\al\ln\delta^{-1}+\e^2\delta^{1-2\al}\Bigr)\\
\leq &C\delta^{-(24\bar{c}_*+1)\al},\quad\forall\ t\leq 4T_2,
\end{aligned}\eeno
where we used the facts that $\al<1$ and $\ln\delta^{-1}\leq C_\eta\delta^{-\eta}$ for any $\eta>0$. Then we get
\beq\label{rough estimate for L 6 na a}\begin{aligned}
&\ln(\|\na a(t)\|_{L^6}^2+1)\leq\ln (G(t)+1)
\leq C\delta^{-(24\bar{c}_*+1)\al},\\
&\|\na a(t)\|_{L^6}^2\leq\exp\Bigl(C\delta^{-(24\bar{c}_*+1)\al}\Bigr),\quad\forall\  t\leq4T_2,
\end{aligned}\eeq
which leads to
 \eqref{rough estimate for L 6 na a 2}.

\no {\bf Step 2. Improved estimate of $\|\na a(t)\|_{L^6}$ on $[T_1,4T_2]\eqdefa[2\gamma^{-1}\ln(\e\delta^{1-\f34\al})^{-1},4(\e\delta^{1-\f34\al})^{-2}]$.}

Thanks to \eqref{dynamic 6} and \eqref{rough estimate for L 6 na a}, we deduce from
 the definition of $f(t)$  that
\beq\label{H 2a}\begin{aligned}
f(t)\lesssim& \bigl(\|\sqrt\r\dot{\vv u}\|_{L^2}\|\na\dot{\vv u}\|_{L^2}+\|a\|_{L^6}^2\bigr)+\delta^{-(24\bar{c}_*+1)\al}\cdot\e\delta^{1-\f34\al},\quad\forall
t\geq T_1,
\end{aligned}\eeq
which along with \eqref{H 2} implies that
\beno\label{H 6}\begin{aligned}
\f{d}{dt}\|\na a\|_{L^6}^2+\f\gamma2\|\na a\|_{L^6}^2\lesssim &\|\na\dot{\vv u}\|_{L^2}^2+\bigl(\|\sqrt\r\dot{\vv u}\|_{L^2}\|\na\dot{\vv u}\|_{L^2}+\|a\|_{L^6}^2\bigr)\|\na a\|_{L^6}^2\\
&+\e\delta^{1-\bigl(24\bar{c}_*+\f74\bigr)\al}\|\na a\|_{L^6}^2,
\end{aligned}\eeno
so that
for $\al\leq\f{1}{24\bar{c}_*+\f{7}{4}}$ and  sufficiently small  $\e$, there holds for any $t\geq T_1$
\beq\label{H 7}\begin{aligned}
\f{d}{dt}\|\na a(t)\|_{L^6}^2+\f\gamma4\|\na a\|_{L^6}^2&\lesssim\|\na\dot{\vv u}\|_{L^2}^2+\bigl(\|\sqrt\r\dot{\vv u}\|_{L^2}\|\na\dot{\vv u}\|_{L^2}+\|a\|_{L^6}^2\bigr)\|\na a\|_{L^6}^2.
\end{aligned}\eeq
By applying Gronwall's inequality to \eqref{H 7} and using \eqref{total enery estimate}, we find
\beno\begin{aligned}
\|\na a(t)\|_{L^6}^2&\leq \Bigl(\|\na a(T_1)\|_{L^6}^2\exp\Bigl(-\f\gamma4(t-T_1)\Bigr)+C\int_{T_1}^t\|\na\dot{\vv u}\|_{L^2}^2d\tau\Bigr)\\
&\qquad\times \exp\Bigl(C\bigl(\|\sqrt\r\dot{\vv u}\|_{L^2_t(L^2)}\|\na\dot{\vv u}\|_{L^2_t(L^2)}+\int_{T_1}^t\|a\|_{L^6}^2d\tau\bigr)\Bigr).
\end{aligned}\eeno
It follows from  \eqref{dynamic 7} and the assumption that $\al\leq\min\left\{\f{1+\f{1}{3\gamma}}{2+\f{1}{4\gamma}},\ \f{1}{24\bar{c}_*+\f{7}{4}}\right\}$ that
\beno\begin{aligned}
\int_{T_1}^{4T_2}\|a(t)\|_{L^6}^2dt&\lesssim\int_{T_1}^{4T_2}e^{-\f{t}{6}}dt\cdot\e^2\delta^{1-2\al}+\e^2\delta^{3-\al}(4T_2-T_1)\\
&\lesssim\bigl(\e\delta^{1-\f34\al}\bigr)^{\f{1}{3\gamma}}\cdot\e^2\delta^{1-2\al}+\e^2\delta^{3-\al}\cdot\bigl(\e\delta^{1-\f34\al}\bigr)^{-2}\\
&\lesssim\e^2\Bigl(\delta^{1+\f{1}{3\gamma}-(2+\f{1}{4\gamma})\al}+\delta^{1+\f12\al}\Bigr)\lesssim1.
\end{aligned}\eeno
While we observe from \eqref{total enery estimate} that
\beno
\|\sqrt\r\dot{\vv u}\|_{L^2_t(L^2)}\|\na\dot{\vv u}\|_{L^2_t(L^2)}
\lesssim\e^2\delta^{2-\f32\al}\leq 1\quad\text{and}\quad \int_{T_1}^t\|\na\dot{\vv u}(\tau)\|_{L^2}^2\,d\tau\lesssim\e^2\delta^{1-2\al}.
\eeno
Then using \eqref{rough estimate for L 6 na a 2} for $t=T_1$, we get for any $t\in[T_1,4T_2]$,
\beq\label{H 8}\begin{aligned}
\|\na a(t)\|_{L^6}^2&\leq \exp\Bigl(-\f{\gamma}{4}(t-T_1)+C\delta^{-(24\bar{c}_*+1)\al}\Bigr)+C\e^2\delta^{1-2\al}.
\end{aligned}\eeq

\no{\bf Step 3. Improved estimate of $\|\na a(t)\|_{L^6}^2$ on $[2T_2,4T_2]$.}

 Due to \eqref{H 8} and the assumption that $\al\leq\min\left\{\f{1+\f{1}{3\gamma}}{2+\f{1}{4\gamma}},\ \f{1}{24\bar{c}_*+\f{7}{4}}\right\}$, we have for any $t\in[T_2,4T_2]$,
\beno\begin{aligned}
\|\na a(t)\|_{L^6}^2&\leq C\exp\Bigl(-\f\gamma4\cdot\e^{-2}\delta^{-2(1-\f34\al)}+\f12\ln\bigl(\e\delta^{1-\f34\al}\bigr)^{-1}
+C\delta^{-(24\bar{c}_*+1)\al}\Bigr)+C\e^2\delta^{1-2\al}\\
&\leq C\exp\Bigl(-\f\gamma5\e^{-2}\delta^{-2(1-\f34\al)}\Bigr)+C\e^2\delta^{1-2\al}\leq C\e^2\delta^{2-\f32\al}+C\e^2\delta^{1-2\al}.
\end{aligned}\eeno
which implies
\beq\label{H 8a}
\|\na a(t)\|_{L^6}^2\leq C\e^2\delta^{1-2\al},\quad\forall\, t\in[T_2,4T_2].
\eeq
Whereas it follows from \eqref{decay of energy 2a} that
\beno
\int_{T_2}^\infty\|\na\dot{\vv u}(t)\|_{L^2}^2\,dt\leq C\e^3\delta^{5(1-\al)}.
\eeno
Then by repeating the argument in Step 2 and using \eqref{H 8a} for $t=T_2$, we improve the estimate \eqref{H 8a} to be
\beno\begin{aligned}
\|\na a(t)\|_{L^6}^2&\leq C\e^2\delta^{1-2\al}\exp\Bigl(-\f\gamma4(t-T_2)\Bigr)+C\e^3\delta^{5(1-\al)}\\
&\leq C\e^2\delta^{1-2\al}\exp\Bigl(-\f\gamma4 \e^{-2}\delta^{-2(1-\f34\al)}\Bigr)+C\e^3\delta^{5(1-\al)}\\
&\leq C\e^4\delta^{3-\f72\al}+C\e^3\delta^{5(1-\al)}\leq C\e^3\delta^{3-\f72\al},\quad\forall\ t\in[2T_2,4T_2],
\end{aligned}\eeno
which leads to \eqref{value of na a between T_2 and 2T_2}.
This finishes  the proof of  Proposition \ref{propagation prop for na a 1}.
\end{proof}

\subsection{The proof of Theorem \ref{propagation regularity thm}}
In this subsection, we shall first prove that both $\|\na a(t)\|_{L^2}^2$ and $\|\na^2\vv u(t)\|_{L^2}^2$ at time $2T_2=2(\e\delta^{1-\f34\al})^{-2}$ are sufficiently small. So that the classical theory guarantees that the regularities of the solutions can be propagated afterwards.

\begin{proof}[The proof of Theorem \ref{propagation regularity thm}]
We divide the proof into the following steps:

\no{\bf Step 1. The estimate of $\|\na a(t)\|_{L^2}$ and $\|\na^2\vv u(t)\|_{L^2}$ on $[0,4T_2]$.}

 We shall use \eqref{estimate of na a} for $p=2$, i.e.
\beq\label{H 31}
\f{d}{dt}\|\na a(t)\|_{L^2}^2+\f\gamma2\|\na a\|_{L^2}^2\leq C\bigl(1+\|a\|_{L^\infty}^2\bigr)\|\r\dot{\vv u}\|_{L^2}^2+f(t)\|\na a\|_{L^2}^2,
\eeq
where $f(t)$ is defined in \eqref{H 2}.

 \no{\Large $\bullet$} {\bf Rough estimates of $\|\na a(t)\|_{L^2}$ and $\|\na^2\vv u(t)\|_{L^2}$ on $[0,4T_2]$.}

We first get, by applying Gronwall's inequality to \eqref{H 31}, that
\beq\label{H 31a}
\|\na a(t)\|_{L^2}^2\leq \Bigl(\|\na a_0\|_{L^2}^2+C\bigl(1+\|a\|_{L^\infty_t(L^\infty)}^2\bigr)\int_0^t\|\r\dot{\vv u}\|_{L^2}^2d\tau\Bigr)\exp\Bigl(C\int_0^tf(\tau)d\tau\Bigr).
\eeq

Since $\al\leq\f{1}{24\bar{c}_*+\f{7}{4}}\leq\f47$, the first inequality of \eqref{rough estimate for L 6 na a} could be improved to
\beno
\ln(\|\na a(t)\|_{L^6}^2+1)\leq C\delta^{-(24\bar{c}_*+\f12)\al},
\quad\forall\ t\leq 4T_2,
\eeno
which along with \eqref{H 5a} and \eqref{H 5b} implies
\beno\begin{aligned}
\int_0^tf(\tau)d\tau&\lesssim\e^2\bigl(\delta^{2-\f{3\al}{2}(1+\f{1}{\gamma})}+\delta^{1-2\al}\bigr)+\ln\delta^{-1}+1+\ln\delta^{-1}\cdot\delta^{-(24\bar{c}_*+\f12)\al}\\
&\lesssim\delta^{-(24\bar{c}_*+1)\al},\quad\forall\ t\leq 4T_2.
\end{aligned}\eeno
Then by virtue of \eqref{initial ansatz for na a in L 6}, \eqref{estimate for a} and \eqref{total enery estimate}, we deduce from\eqref{H 31a} that
\beq\label{H 32}\begin{aligned}
\|\na a\|_{L^2}^2&\lesssim \bigl(\delta^{1-2\al}+\delta^{-(2+\f{1}{\gamma})\al}\cdot\e^2\delta^{3-\al}\bigr)\exp\Bigl(C\delta^{-(24\bar{c}_*+1)\al}\Bigr)\\
&\lesssim\exp\Bigl(C\delta^{-(24\bar{c}_*+1)\al}\Bigr),\quad\forall\ t\leq 4T_2.
\end{aligned}\eeq

While it follows from  \eqref{energy in H 2} and \eqref{H 32} that
\beq\label{H 41}\begin{aligned}
\|\na^2\vv u(t)\|_{L^2}^2&\lesssim\|\na\curl\vv u(t)\|_{L^2}^2+\|\na F(t)\|_{L^2}^2+
+\|\na a(t)\|_{L^2}^2\\
&\lesssim\e^2\delta^{1-2(1+\f{1}{2\gamma})\al}+\exp\Bigl(C\delta^{-(24\bar{c}_*+1)\al}\Bigr)\\
&\lesssim\exp\Bigl(C\delta^{-(24\bar{c}_*+1)\al}\Bigr),\quad\forall\,t\leq 4T_2.
\end{aligned}\eeq

By combining \eqref{H 32} with \eqref{H 41}, we obtain the first inequality of \eqref{high order energy estimate}.

 \no{\Large $\bullet$} {\bf Improved estimates of $\|\na a(t)\|_{L^2}$ on $[T_1,4T_2]$.}

  Recall that $T_1=2\gamma^{-1}\ln(\e\delta^{1-\f34\al})^{-1}>T_0$.
  By virtue of  \eqref{H 2a} and \eqref{decay for a a}, we deduce from \eqref{H 31} that
\beno\begin{aligned}
\f{d}{dt}\|\na a(t)\|_{L^2}^2+\f\gamma2\|\na a\|_{L^2}^2
&\lesssim\|\sqrt{\r}\dot{\vv u}\|_{L^2}^2+\e\delta^{1-(24\bar{c}_*+\f74)\al}\|\na a\|_{L^2}^2\\
&\qquad+\bigl(\|\sqrt\r\dot{\vv u}\|_{L^2}\|\na\dot{\vv u}\|_{L^2}+\|a\|_{L^6}^2\bigr)\|\na a\|_{L^2}^2,
\end{aligned}\eeno
which along with the assumption that $\al\leq\f{1}{24\bar{c}_*+\f74}$ implies that
\beno\begin{aligned}
\f{d}{dt}\|\na a(t)\|_{L^2}^2+\f\gamma4\|\na a\|_{L^2}^2
&\lesssim\|\sqrt{\r}\dot{\vv u}\|_{L^2}^2+\bigl(\|\sqrt\r\dot{\vv u}\|_{L^2}\|\na\dot{\vv u}\|_{L^2}+\|a\|_{L^6}^2\bigr)\|\na a\|_{L^2}^2.
\end{aligned}\eeno
Then we get, by a similar derivation as \eqref{H 8}, that
\beno
\|\na a(t)\|_{L^2}^2\leq C\exp\Bigl(-\f\gamma4(t-T_1)\Bigr)\|\na a(T_1)\|_{L^2}^2+C\int_{T_1}^t\|\sqrt{\r}\dot{\vv u}\|_{L^2}^2d\tau,\quad\forall\ \, t\in[T_1,4T_2],
\eeno
which along with \eqref{H 32} and \eqref{total enery estimate} implies
\beq\label{H 41a}
\|\na a(t)\|_{L^2}^2\leq C\exp\Bigl(-\f\gamma4(t-T_1)+C\delta^{-(24\bar{c}_*+1)\al}\Bigr)+C\e^2\delta^{3-\al},\quad\forall \, t\in[T_1,4T_2].
\eeq

 \no{\Large $\bullet$}
{\bf Improved estimates of $\|\na a(t)\|_{L^2}$ and $\|\na^2\vv u(t)\|_{L^2}$ on $[T_2,4T_2]$.}

Similar to the derivation as \eqref{H 8a}, we deduce from \eqref{H 41a} that
\beq\label{H 33}
\|\na a(t)\|_{L^2}^2\leq C\e^2\delta^{2-\f32\al}+C\e^2\delta^{3-\al}\leq C\e^2\delta^{2-\f32\al},\quad\forall\, t\in[T_2,4T_2].
\eeq

By virtue of \eqref{elliptic estimates}, \eqref{decay for a a}, \eqref{decay of energy 2a} and \eqref{H 33}, we have
\beq\label{H 33a}\begin{aligned}
\|\na^2\vv u(t)\|_{L^2}^2&\lesssim\|\na\curl\vv u(t)\|_{L^2}^2+\|\na F(t)\|_{L^2}^2
+\|\na a(t)\|_{L^2}^2\lesssim\|\dot{\vv u}(t)\|_{L^2}^2+\|\na a(t)\|_{L^2}^2\\
&\lesssim\e^3\delta^{5(1-\al)}+\e^2\delta^{2-\f32\al}\lesssim\e^2\delta^{2-\f32\al},\quad\forall\,t\in[T_2,4T_2].
\end{aligned}\eeq

\no
{\bf Step 2. The estimate of $(\rho-1,\vv u)$ in the critical spaces at time $2T_2$.}

Since $F=\div\vv u-a$ and $|\r-1|\leq|\r^\gamma-1|=|a|$, we deduce from \eqref{decay of energy 2a} and $\varrho=\r-1$ that
\beq\label{H 34a}
\sup_{t\geq T_2}\bigl(\|\vv u(t)\|_{L^2}^2+\|\na\vv u(t)\|_{L^2}^2+\|\varrho(t)\|_{L^2}^2+\|\varrho(t)\|_{L^6}^2\bigr)\lesssim\e^3\delta^{5(1-\al)},
\eeq
which imples
\beq\label{H 34}
\|\vv u(t)\|_{\dB^{\f12}_{2,1}}\lesssim\|\vv u(t)\|_{L^2}^{\f12}\|\na\vv u(t)\|_{L^2}^{\f12}\lesssim\e^{\f32}\delta^{\f52(1-\al)},\quad\forall\ t\geq T_2.
\eeq

While observing that $\na a=\gamma\r^{\gamma-1}\na\r$ and $\r\geq(\f12)^{\f{1}{\gamma}}$ (see \eqref{lower bound of rho}), we deduce from \eqref{H 33} and \eqref{value of na a between T_2 and 2T_2} that for any $t\in[2T_2,4T_2]$,
\beq\label{H 35}
\|\na\r(t)\|_{L^2}\lesssim\|\na a(t)\|_{L^2}\lesssim\e\delta^{1-\f34\al}\andf
\|\na\r(t)\|_{L^6}\lesssim\|\na a(t)\|_{L^6}\lesssim\e^{\f32}\delta^{\f32-\f74\al},
\eeq
from which and \eqref{H 34a}, we infer
\beq\label{H 36}\begin{aligned}
&\|\varrho(t)\|_{\dB^{\f12}_{2,1}}\lesssim\|\varrho(t)\|_{L^2}^{\f12}\|\na\r(t)\|_{L^2}^{\f12}\lesssim\e^{\f54}\delta^{\f74-\f{13}{8}\al},\\
&\|\varrho(t)\|_{\dB^{\f34}_{2,1}}\lesssim\|\varrho(t)\|_{L^2}^{\f14}\|\na\r(t)\|_{L^2}^{\f34}\lesssim\e^{\f{9}{8}}\delta^{\f{11}{8}-\f{19}{16}\al},\\
&\|\varrho(t)\|_{\dB^{\f34}_{6,1}}\lesssim\|\varrho(t)\|_{L^6}^{\f14}\|\na\r(t)\|_{L^6}^{\f34}\lesssim\e^{\f32}\delta^{\f74-\f{31}{16}\al}.
\end{aligned}\eeq
By using interpolation inequality  again, we deduce from  the last two inequalities of \eqref{H 36} that
\beq\label{H 37}
\|\varrho(t)\|_{\dB^{\f34}_{4,1}}\lesssim\|\varrho(t)\|_{\dB^{\f34}_{2,1}}^{\f14}\|\varrho(t)\|_{\dB^{\f34}_{6,1}}^{\f34}\lesssim\e^{\f{45}{32}}\delta^{\f{53}{32}-\f74\al}.
\eeq

 Note that $\al\leq\min\bigl(\f{1+\f{1}{3\gamma}}{2+\f{1}{4\gamma}},\ \f{1}{24\bar{c}_*+\f{7}{4}}\bigr)<\f47$, we deduce from \eqref{H 34}, \eqref{H 36} and \eqref{H 37} that
\beq\label{H 38}
\|\vv u(t)\|_{\dB^{\f12}_{2,1}}+\|\varrho(t)\|_{\dB^{\f12}_{2,1}}+\|\varrho(t)\|_{\dB^{\f34}_{4,1}}\lesssim\e^{\f54}\delta^{\f{21}{32}},\quad\forall\ t\in[2T_2,4T_2].
\eeq
In particular, \eqref{H 38} holds for $t=2T_2,$ so that the critical norms of $(\varrho(t),\vv u(t))$ are small enough at time $2T_2$.

Thanks to \eqref{H 38} and Theorem 2.1 in \cite{Dan-He},  \eqref{CNS} has a unique global solution $(\r,\vv u)$ over $[2T_2,\infty)$ such that
there holds
 \eqref{energy in critical space}.

\no{\bf Step 3. Propagation of $\|\na a(t)\|_{L^6}$.}

We first observe that Proposition \ref{propagation prop for na a 1} ensures the estimate of $\|\na a(t)\|_{L^6}$ over time interval $[0,4T_2]$ which implies the first two inequalities of \eqref{dynamics of na a in L 6}. To deal with the estimate of $\|\na a(t)\|_{L^6}$ on $[4T_2,\infty)$, by using \eqref{elliptic estimates} and \eqref{dynamic 6}, we deduce from \eqref{E 73} for $p=6$ that
\beno\begin{aligned}
\f{d}{dt}\|\na a(t)\|_{L^6}^2+\f\gamma2\|\na a\|_{L^6}^2
&\leq4\gamma\|\na\vv u\|_{L^\infty}\|\na a\|_{L^6}^2+C\|\na\dot{\vv u}\|_{L^2}^2.
\end{aligned}\eeno
Here the second term on the r.h.s of \eqref{E 73} was absorbed by the l.h.s of \eqref{E 73}.

Applying the Gronwall's inequality to the above inequality gives rise to
\beno\begin{aligned}
\|\na a(t)\|_{L^6}^2\leq&\exp\Bigl(-\f{\gamma}{2}(t-2T_2)+4\gamma\int_{2T_2}^\infty\|\na\vv u(t)\|_{L^\infty}\,dt\Bigr)\cdot\|\na a(2T_2)\|_{L^6}^2\\
&+C\exp\Bigl(4\gamma\int_{2T_2}^\infty\|\na\vv u\|_{L^\infty}dt\Bigr)\cdot\int_{2T_2}^t\|\na\dot{\vv u}(\tau)\|_{L^2}^2\,d\tau,\quad\forall\, t\geq 2T_2.
\end{aligned}\eeno
 Observing that $\int_{2T_2}^\infty\|\na\vv u\|_{L^\infty}\,dt\lesssim1$ (see \eqref{energy in critical space}), we
 get, by using \eqref{decay of energy 2a} and \eqref{value of na a between T_2 and 2T_2},  that for $t\geq 3T_2=3(\e\delta^{1-\f34\al})^{-2}$
\beno\begin{aligned}
\|\na a(t)\|_{L^6}^2&\lesssim\exp\Bigl(-\f{\gamma}{2}(t-2T_2))\Bigr)\|\na a(2T_2)\|_{L^6}^2+\int_{2T_2}^t\|\na\dot{\vv u}(\tau)\|_{L^2}^2\,d\tau\\
&\lesssim\exp\Bigl(-\f{\gamma}{2}(\e\delta^{1-\f34\al})^{-2}\Bigr)\e^3\delta^{3-\f72\al}+\e^3\delta^{5(1-\al)}\lesssim\e^3\delta^{5(1-\al)},
\end{aligned}\eeno
which leads to the last inequality of \eqref{dynamics of na a in L 6}.

\no{\bf Step 4. The estimates of $\|\na a(t)\|_{L^2}$ and $\|\na^2\vv u(t)\|_{L^2}$ on $[2T_2,\infty)$.}

After similar derivation as Step 3,
 we
get, by using \eqref{H 33} and \eqref{H 33a} for $t=2T_2$, that
\beno
\sup_{t\geq 2T_2}\Bigl(\|\na a(t)\|_{L^2}^2+\|\na^2\vv u(t)\|_{L^2}^2\Bigr)
\lesssim\e^2\delta^{2-\f32\al},
\eeno
which is the second inequality of \eqref{high order energy estimate}.
We thus complete the proof of Theorem \ref{propagation regularity thm}.
\end{proof}

\vspace{0.5cm}
\noindent {\bf Acknowledgments.}  The first author is supported by NSF of China
under  Grants 11771236 and 12141102. The second author is  supported by  NSF of China  under Grants  11671383 and 12171019.
The third author is supported by National Key R$\&$D Program of China under grant
  2021YFA1000800 and K. C. Wong Education Foundation.
  Ping Zhang is  also partially supported by  NSF of China under Grants  12288201, 11731007 and 12031006.  All data generated or analysed during this study are included in this article.


\begin{thebibliography}{99}
\bibitem{bcd} H. Bahouri, J.~Y. Chemin and R. Danchin, {\it  Fourier Analysis and Nonlinear
Partial Differential Equations,}  Grundlehren der Mathematischen
Wissenschaften, Springer 2011.



\bibitem{Charve} F. Charve and R. Danchin,  A global existence result for the compressible Navier-Stokes equations in the critical $L^p$ framework, {\it Arch. Ration. Mech. Anal.}, {\bf 198} (2010), 233-271.

\bibitem{CZ6} J.-Y. Chemin and P. Zhang, Inhomogeneous incompressible viscous flows with slowly varying initial data,
{\it J. Inst. Math. Jussieu}, {\bf  17}  (2018),   1121-1172.


\bibitem{CMZ}Q. Chen, C. Miao and Z. Zhang,  Global well-posedness for compressible Navier-Stokes equations with highly oscillating initial velocity, {\it  Comm. Pure Appl. Math.}, {\bf 63} (2010),  1173-1224.



\bibitem{Christ}  D.  Christodoulou, {\it The formation of black holes in general relativity.} EMS Monographs in Mathematics. European Mathematical Society (EMS), Z$\ddot{u}$rich, 2009.

\bibitem{ChristMiao}  D. Christodoulou and S.Miao, {\it Compressible flow and Euler's equations.}
Surveys of Modern Mathematics, {\bf 9}. International Press, Somerville, MA; Higher Education Press, Beijing, 2014.


\bibitem{Dan-Inve} R. Danchin, Global existence  in  critical spaces for compressible Navier-Stokes equations,  {\it Invent. Math.}, {\bf 141} (2000), 579-614.






\bibitem{DFM20} R. Danchin, F. Fanelli and M. Paicu,  A well-posedness result for viscous compressible fluids with only bounded density, {\it  Anal. PDE}, {\bf  13} (2020),  275-316.


\bibitem{Dan-He} R. Danchin and L. He,  The incompressible limit in $L^p$ type critical spaces, {\it Math. Ann.}, {\bf 64} (2016), 1-38.



\bibitem{DXY1} B. Ding, Z. Xin and H. Yin, Global smooth large data solutions of 4-D quasilinear wave equations, (private communication 2022).

\bibitem{DXY2} B. Ding, Z. Xin and H. Yin, The global smooth solution of 2D irrotational and isentropic Chaplygin gases with a class of large initial data,   arXiv:2204.05017.



 \bibitem{Feisl} E. Feireisl,  {\it Dynamics of viscous compressible fluids,} Oxford Lecture Series in Mathematics and its Applications, {\bf 26}. Oxford University Press, Oxford, 2004.





\bibitem{MRRS} F. Merle, P. Raphael, I. Rodnianski and J. Szeftel, {\it On the implosion of a three dimensional compressible fluid}, arXiv 1912.11009.

\bibitem{Hoff95} D. Hoff,  Global solutions of the Navier-Stokes equations for multidimensional compressible flow with discontinuous initial data,
 {\it J. Differential Equations}, {\bf  120} (1995),  215-254.

\bibitem{Huang} X. Huang, J. Li and Z. Xin,   Global well-posedness of classical solutions with large oscillations and vacuum to the three dimensional isentropic compressible Navier-Stokes equations, {\it Comm. Pure Appl. Math.}, {\bf 65} (2012),  549-585.

 \bibitem{HL16} X. Huang and J. Li,   Existence and blowup behavior of global strong solutions to the two-dimensional barotrpic compressible Navier-Stokes system with vacuum and large initial data, {\it  J. Math. Pures Appl. (9)}, {\bf 106} (2016), 123-154.

\bibitem{Huang1} X. Huang and J. Li,   Global classical and weak solutions to the three-dimensional full compressible Navier-Stokes system with vacuum and large oscillations, {\it Arch. Ration. Mech. Anal.}, {\bf  227} (2018), 995-1059.

 \bibitem{Lions98} P. L.  Lions,  {\it  Mathematical topics in fluid mechanics. Vol. 2. Compressible models.} Oxford Lecture Series in Mathematics and its Applications, {\bf 10}. Oxford Science Publications. The Clarendon Press, Oxford University Press, New York, 1998.

\bibitem{LYZ1} Y. Lu and P. Zhang, Global solutions of 2D isentropic compressible Navier-Stokes equations with one slow variable, arXiv:2106.01568.

\bibitem{MN1}  A. Matsumura and T. Nishida, The initial value problem for the equations of motion of compressible viscous and heat-conductive fluids, {\it Proc. Japan Acad. Ser. A Math. Sci.}, {\bf 55} (1979), 337-342.


\bibitem{MN2} A. Matsumura and T. Nishida,  The initial value problem for the equations of motion of viscous and heat-conductive gases, {\it J. Math. Kyoto Univ.}, {\bf 20} (1980), 67-104.

 \bibitem{MLP19} S.  Miao, L. Pei and P. Yu,  On classical global solutions of nonlinear wave equations with large data,
 {\it Int. Math. Res. Not. IMRN},  (2019), no. 19, 5859-5913.

 \bibitem{MiaoYu} S. Miao and P. Yu,  On the formation of shocks for quasilinear wave equations, {\it Invent. Math.}, {\bf 207} (2017), 697-831.

\bibitem{Nash} J. Nash,  Le probl\`{e}me de Cauchy pour les \'{e}quations diff\'{e}rentielles d'un fluide g\'{e}n\'{e}ral, {\it Bulletin de la Soc. Math. de France}, {\bf 90} (1962), 487-497.

  \bibitem{Schonbek} M. Schonbek,
Lower Bounds of Rates of Decay for Solutions to the Navier-Stokes
equations, {\it Journal of the  American  Mathematical  Society},
{\bf 4}, 1991, pages  423-449.

\bibitem{Schonbek2}
M. Schonbek and T.  Schonbek, On the boundedness and decay of
moments of solutions of the Navier Stokes equations. {\it Advances
in Differential Equations}, {\bf  5} (2000),  861-898.

\bibitem{Solo76}  V. A. Solonnikov, On solvability of an initial-boundary value problem for the equations of motion of a viscous compressible fluid, in: {\it  Studies on Linear Operators and Function Theory}, Vol. 6 [in Russain], Nauka, Leningrad, 1976, pp. 128-142. ({\it Zap. Nauchn. Sem. Leningrad. Otdel. Mat. Inst. Steklov.} (LOMI), 56.)

\bibitem{Tani} A. Tani, On the first initial-boundary value problem of compressible viscous fluid motion,
{\it Publ. Res. Inst. Math. Sci.}, {\bf 13} (1977), 193-253.

\bibitem{VK95}  V. A. Va$\breve{i}$gant and A. V.  Kazhikhov,  On the existence of global solutions of two-dimensional Navier-Stokes equations of a compressible viscous fluid, (Russian) {\it Sibirsk. Mat. Zh.}, {\bf 36} (1995),  1283-1316, ii; translation in {\it Siberian Math. J.},
     {\bf 36} (1995),  1108-1141.

\bibitem {Wiegner} M. Wiegner, Decay results for weak solutions to the Navier-Stokes equations on $\R^n$,
{\it Journal of the  London Mathematical  Society}, {\bf 35} (1987),  303--313.


\end{thebibliography}
\end{document}